\setheadfoot{\onelineskip}{2\onelineskip} 
\DeclareMathAlphabet{\mathpzc}{OT1}{pzc}{m}{it}
\DeclareFontFamily{U}{mathx}{\hyphenchar\font45}
\DeclareFontShape{U}{mathx}{m}{n}{
      <5> <6> <7> <8> <9> <10>
      <10.95> <12> <14.4> <17.28> <20.74> <24.88>
      mathx10
      }{}
\DeclareSymbolFont{mathx}{U}{mathx}{m}{n}
\DeclareMathAccent{\widecheck}{0}{mathx}{"71}
  \setlist{nosep}
\newtheorem*{theorem*}{Theorem}
\newtheorem{definition}{Definition}[chapter]
\newtheorem{proposition}[definition]{Proposition}   
\newtheorem{theorem}[definition]{Theorem}
\newtheorem{lemma}[definition]{Lemma}   
\newtheorem{corollary}[definition]{Corollary}   
\theoremstyle{remark}
\newtheorem{example}[definition]{Example}
\newtheorem*{example*}{Example}
\newtheorem{remark}[definition]{Remark}
  \author{Brendan Fong and David I.\ Spivak}
  \title{Hypergraph Categories}
  \date{\vspace{-.3in}}
  \definecolor{darkblue}{rgb}{0,0,0.7} 
  \newcommand{\define}[1]{\emph{\upshape #1}}
  \newcommand{\ot}{\otimes}
  \newcommand{\rr}{{\mathbb{R}}}
\newcommand{\hyp}{\Cat{Hyp}}
\newcommand{\hcca}[1][-]{A_{#1}}
\newcommand{\cahc}[1][-]{\cat{H}_{#1}}
\newcommand{\alg}{\text{-}\Cat{Alg}}
\newcommand{\calg}{\Cat{Cospan}\alg}
\DeclareMathOperator{\id}{id}
\DeclareMathOperator{\ob}{Ob}
\DeclarePairedDelimiter{\copair}{[}{]}
\DeclarePairedDelimiter{\corners}{\ulcorner}{\urcorner}
\newcommand{\const}[1]{\mathtt{#1}}
\newcommand{\cat}[1]{\mathcal{#1}}
\newcommand{\Cat}[1]{\mathbf{#1}}
\newcommand{\CCat}[1]{{\mathpzc{#1}}}
\newcommand{\fun}[1]{\mathsf{#1}}
\newcommand{\Fun}[1]{\mathsf{#1}}
\newcommand{\finset}{\Cat{FinSet}}
\newcommand{\smset}{\Cat{Set}}
\newcommand{\smcat}{\Cat{Cat}}
\newcommand{\hhyp}{\CCat{Hyp}}
\newcommand{\of}{\const{OF}}
\newcommand{\io}{\const{io}}
\newcommand{\ff}{\const{ff}}
\newcommand{\gens}{\Fun{Gens}}
\newcommand{\Str}{\Fun{Str}}
\newcommand{\str}{\const{str}}
\newcommand{\List}{\Fun{List}}
\newcommand{\sing}{\fun{sing}}
\newcommand{\flatten}{\fun{flat}}
\newcommand{\gathr}[1]{\,\widehat{#1}\,}
\newcommand{\parse}[1]{\widecheck{#1}}
\newcommand{\frob}{\Fun{Frob}}
\newcommand{\prt}{\Fun{Part}}
\newcommand{\concat}{\oplus}
\newcommand{\emptylist}{\varnothing}
\newcommand{\To}[1]{\xrightarrow{#1}}
\newcommand{\Too}[1]{\To{\;\;#1\;\;}}
\newcommand{\from}{\leftarrow}
\newcommand{\surj}{\twoheadrightarrow}
\newcommand{\inj}{\rightarrowtail}
\newcommand{\ul}[1]{{#1}}
\renewcommand{\ss}{\subseteq}
\newcommand{\op}{^\mathrm{op}}
\newcommand{\comp}{\const{comp}}
\newcommand{\cupp}{\const{cup}}
\newcommand{\capp}{\const{cap}}
\newcommand{\cospan}[1][\Lambda]{\Cat{Cospan}_{#1}}
\newcommand{\linrel}{\Cat{LinRel}}
\newcommand{\rel}{\Cat{Rel}}
\newcommand{\cp}{\mathbin{\fatsemi}}
\newcommand{\nn}{\mathbb{N}}
\newcommand{\lax}{\Cat{Lax}}
\newcommand{\commentout}[1]{}
\begin{document}   

\maketitle
\begin{abstract}
Hypergraph categories have been rediscovered at least five times, under various
names, including well-supported compact closed categories, dgs-monoidal
categories, and dungeon categories. Perhaps the reason they keep being
reinvented is two-fold: there are many applications---including to automata,
databases, circuits, linear relations, graph rewriting, and belief
propagation---and yet the standard definition is so involved and ornate as to be
difficult to find in the literature. Indeed, a hypergraph category is, roughly
speaking, a ``symmetric monoidal category in which each object is equipped with
the structure of a special commutative Frobenius monoid, satisfying certain
coherence conditions''.

Fortunately, this description can be simplified a great deal: a hypergraph
category is simply a ``cospan-algebra,'' roughly a lax monoidal functor from cospans to sets. The goal of this paper is to remove the
scare-quotes and make the previous statement precise. We prove two main
theorems. First is a coherence theorem for hypergraph categories, which says
that every hypergraph category is equivalent to an \emph{objectwise-free}
hypergraph category. Second, we prove that the category of objectwise-free
hypergraph categories is equivalent to the category of cospan-algebras. \\~\\
Keywords: Hypergraph categories, compact closed categories, Frobenius algebras, cospan, wiring diagram.
\end{abstract}
  
\chapter{Introduction}

Suppose you wish to specify the following picture:
\begin{equation}\label{eqn.main_pic}
  \begin{aligned}
\begin{tikzpicture}[penetration=0, unoriented WD,  pack size=20pt, pack inside
color=white, pack outside color=black, link size=2pt, scale=2]
	\node[pack] at (-1.5,-1) (f) {$f$};
	\node[pack] at (0,1.9) (g) {$g$};
	\node[pack] at (1.5,-1) (h) {$h$};
	\node[outer pack, inner sep=34pt] at (0,.2) (out) {};
	\node[link] at ($(f)!.5!(h)$) (link1) {};
	\node[link] at (-2.4,-.25) (link2) {};
	\node[link] at ($(f.75)!.5!(g.-135)$) (link3) {};
   \begin{scope}
	\draw (out.270) -- (link1);
	\draw (out.190) -- (link2);
	\draw (out.155) -- (link3);
	\draw (out.-35) -- (h.-30);
	\draw (out.15) to[out=-165,in=-110] (out.70);
	\draw (f.15) to[out=0,in=165] (link1);
	\draw (f.-15) to[out=0,in=-165] (link1);
	\draw (h.180) -- (link1);
	\draw (g.-60) -- (h.120);
	\draw (f.45) -- (g.-105);
	\draw (f.75) -- (link3);
	\draw (g.-135) -- (link3);
   \end{scope}
\end{tikzpicture}
\end{aligned}
\end{equation}
This picture might represent, for example, an electrical circuit, a tensor network, or a
pattern of shared variables between logical formulas.

One way to specify the picture in \cref{eqn.main_pic}---given the symbols $f$, $g$, $h$ and their arities---is to define
primitives that represent merging, initializing, splitting, and terminating
wires. The picture can then be constructed piece by piece,
\begin{equation}
\label{eq.keyhyp1}
\begin{aligned}
\begin{tikzpicture}[penetration=0, oriented WD, spider diagram, bb min
width=.4cm, bby=.22cm, inner xsep=.2cm, bbx=.5cm]
	\coordinate (id1) {};
	\node[bb={0}{0}, minimum height=35pt, below=2 of id1] (f) {$f$};
	\node[spider={1}{0}, below=.1 of f] (eta1) {};
	\node[spider={2}{1}, below right=1 and 3 of id1] (mu21) {};
	\node[spider={2}{1}, below=3.5 of mu21] (mu22) {};
	\node[spider={0}{1}, right=6 of id1] (eps3) {};
	\node[bb={0}{0}, minimum height=25pt, below=.4 of eps3] (g) {$g$};
	\node[spider={2}{1}, below=2.3 of g] (mu3) {};
	\node[spider={1}{2}, right=3 of eps3] (del4) {};
	\node[bb={0}{1}, minimum height=32pt, below=2.4 of del4] (h) {$h$};
	\node at ($(f)-(.3,0)$) (left) {};
	\node at ($(h)+(.2,0)$) (right){};
	\node at ($(f)+(0,5)$) (top){};
	\node at ($(f)-(0,4.5)$) (bot){};
	\node[bb={0}{0}, color=gray, fit=(top) (left) (bot) (right)] (outer) {};
	\foreach \i in {1,2,3}{
		\node at ($(outer.west)!.25*\i!(outer.east)$) (col\i) {};
		\draw[thick, gray] (col\i|-outer.south) -- (col\i|-outer.north);}
	\draw (outer.west|-eta1_in1) -- (eta1_in1);
	\draw (outer.west|-mu21_in1) -- (mu21_in1);
	\draw (f.east|-mu21_in2) -- (mu21_in2);
	\draw (mu21_out1) -- (mu21_out1-|g.west);
	\draw (f.east|-g.225) -- (g.225);
	\draw (g.-20) -- (g.-20-|h.west);
	\draw (f.east|-mu22_in1) -- (mu22_in1);
	\draw (f.east|-mu22_in2) -- (mu22_in2);
	\draw (mu22_out1) to[out=0,in=180] (mu3_in1);
	\draw (outer.west|-mu3_in2) -- (mu3_in2);
	\draw (mu3_out1) -- (mu3_out1-|h.west);
	\draw (eps3_out1) -- (del4_in1);
	\draw (del4_out1) -- (del4_out1-|outer.east);
	\draw (del4_out2) -- (del4_out2-|outer.east);
	\draw (h_out1) -- (h_out1-|outer.east);
\end{tikzpicture}
\end{aligned}
\end{equation}
and described as a text-string, as follows:
\[
 (1 \otimes f \otimes \counit{1em} \otimes 1) \cp (\mult{1.5em} \otimes 1 \otimes \mult{1.5em} \otimes 1)\cp
 (\unit{1em} \otimes g \otimes \mult{1.5em}) \cp (\comult{1.5em} \otimes h).
\]
This system of notation provides a way of describing patterns of interconnection
between $f$, $g$ and $h$. The subtlety---a nontrivial one---lies in
understanding when the morphisms defined by two different constructions should be
considered the same. Indeed, we might equally we have chosen to represent the
above picture as 
\begin{equation}
\label{eq.keyhyp2}
\begin{aligned}
\begin{tikzpicture}[penetration=0, oriented WD, spider diagram, bb min
width=.4cm, bby=.22cm, inner xsep=.2cm, bbx=.5cm]
	\node[spider={1}{2}] (del1) {};
	\node[spider={1}{0}, below=3 of del1] (eta1) {};
	\node[bb={0}{0}, minimum height=35pt, below right=-.2 and .1 of del1_out2] (f) {$f$};
	\node[special spider={1}{2}{.2em}{\leglen}, right=.5 of f.-58] (del2) {};
	\node[special spider={2}{1}{\leglen}{.2em}, right=2.6 of f] (mu1) {};
	\node[bb={0}{0}, minimum height=20pt, below right=-.8 and 1 of mu1] (h) {$h$};
	\node[bb={0}{0}, minimum height=30pt, right=7 of del1] (g) {$g$};
	\node[special spider={2}{1}{\leglen}{0}, right=6 of mu1_in1] (mu2) {}; 
	\node[special spider={1}{0}{0}{0}, right=.5 of mu2_out1] (eta2) {};
	\node at ($(f)-(1.5,0)$) (left) {};
	\node at ($(g)+(2.5,0)$) (right){};
	\node at ($(f)+(0,7.5)$) (top){};
	\node at ($(f)-(0,3.5)$) (bot){};
	\node[bb={0}{0}, color=gray, fit=(top) (left) (bot) (right)] (outer) {};
	\draw (outer.160) -- (outer.20);
	\draw (outer.west|-del1_in1) -- (del1_in1);
	\draw (del1_out1) -- (del1_out1-|g.west);
	\draw (del1_out2) -- (del1_out2-|f.west);
	\draw (f.55) -- (f.55-|g.west);
	\draw (outer.west|-eta1_in1) -- (eta1_in1);
	\draw (f.east|-mu1_in1) -- (mu1_in1);
	\draw (f.east|-del2_in1) -- (del2_in1);
	\draw (del2_out1) -- (mu1_in2);
	\draw (del2_out2) -- (del2_out2-|outer.east);
	\draw (mu1_out1) -- (mu1_out1-|h.west);
	\draw (g.east|-mu2_in1) -- (mu2_in1);
	\draw (h.east|-mu2_in2) -- (mu2_in2);
	\draw (mu2_out1) -- (eta2);
	\draw (h.-30) -- (h.-30-|outer.east);
\end{tikzpicture}
\end{aligned}
\end{equation}
Similarly, but more simply, the diagrams
\[
\begin{tikzpicture}[spider diagram]
	\node[spider={1}{2}] (a) {};
	\node[spider={2}{1}, below right=\leglen*2/3 and 1 of a] (b) {};
	\coordinate (c1) at (a_out1-|b_out1);
	\coordinate (c2) at (a_in1|-b_in2);
	\draw (a_out2) -- (b_in1);
	\draw (a_out1) -- (c1);
	\draw (c2) -- (b_in2);
	\node[special spider={2}{1}{\leglen}{0}, right=2.5 of a_out2] (aa) {};
	\node[special spider={1}{2}{0}{\leglen}, right=.5 of aa] (bb) {};
	\draw (aa_out1) -- (bb_in1);
	\coordinate (h1) at ($(b_out1)!.5!(aa_in1)$);
	\node at (h1|-aa) {$=$};
\end{tikzpicture}
\]
both represent the same pattern of interconnection.

Nonetheless, despite this subtlety, the method we have just outlined has been
independently rediscovered, with various names and motivations, many times over
the past few decades. The first were Carboni and Walters, who called this
structure a well-support compact closed category, and used it to study
categories of relations, as well as labelled transition systems and automata
\cite{Carboni:1991a}.  Bruni and Gadducci called it a dgs-monoidal category when studying Petri nets \cite{gadducci1997inductive}.  Morton called it a 
dungeon category when studying belief propagation \cite{MortonBelief2014}.
Finally, converging on the name \emph{hypergraph category}, Kissinger used it to study quantum systems and graph rewriting, and Fong,
Baez, and Pollard used it to study electric circuits and chemical reaction
networks
\cite{kissinger2015finite,fong2015decorated,baez2015compositional,baez2016compositional}. 

Simultaneously, Spivak defined essentially the same structure in his work on
databases \cite{Spivak:2013b}, but preferred a more uniform, combinatorial
approach. Instead of thinking about how to generate such pictures piece by piece,
Spivak focussed on writing down the connection patterns. For example, the
picture in \cref{eqn.main_pic} can be described as follows. First, we define three sets,
corresponding to the ports of the all three inner boxes ($A$, the white circles), the
intermediate nodes ($N$, the black circles), and the ports of the outer box
($B$, the gray circles).
\begin{equation} \label{eqn.cospan_pic}
\begin{aligned}
\begin{tikzpicture}[penetration=0, unoriented WD,  pack size=20pt, pack inside
color=white, pack outside color=black, link size=2pt, font=\tiny, scale=2]
	\node[pack] at (-1.5,-1) (f) {$f$};
	\node[pack] at (0,1.9) (g) {$g$};
	\node[pack] at (1.5,-1) (h) {$h$};
	\node[outer pack, inner sep=34pt] at (0,.2) (out) {};
	\node[link] at ($(f)!.5!(h)$) (link1) {};
	\node[link] at (-2.4,-.25) (link2) {};
	\node[link] at ($(f.75)!.5!(g.-135)$) (link3) {};
   \begin{scope}[label distance=-6pt]
	\draw (out.270) node[link, gray]{} 
		to node[pos=-.2]{3} 
		(link1);
	\draw (out.190) node[link, gray]{} 
		to node[pos=-.4]{1} 
		(link2);
	\draw (out.155) node[link, gray]{} 
		to node[pos=-.15]{2} 
		(link3);
	\draw (out.-35) node[link, gray]{}
		to node[pos=.5,link]{} node[pos=-.5]{6} node[pos=1.5]{3} 
		(h.-30);
	\draw (out.15) node[link, gray]{}
		to[out=-165,in=-110] node[pos=.5,link]{} node[pos=-.1]{5}
		node[pos=1.1]{4}
		(out.70) node[link, gray]{};
	\draw (f.15) node[link,fill=white,thin,label=45:3]{} 
		to[out=0,in=165]
		(link1);
	\draw (f.-15) node[link,fill=white,thin]{}
		to[out=0,in=-165] node[pos=-.2]{4} 
		(link1);
	\draw (h.180) node[link,fill=white,thin]{}
		to node[pos=-.3]{1}  
		(link1);
	\draw (g.-60) node[link,fill=white,thin]{}
		to node[pos=.5,link]{} node[pos=-.15]{3} node[pos=1.15]{2} 
		(h.120) node[link,fill=white,thin]{};
	\draw (f.45) node[link,fill=white,thin,label=-140:2]{}
		to node[pos=.5,link]{} node[pos=1.15]{2} 
		(g.-105) node[link,fill=white,thin]{};
	\draw (f.75) node[link,fill=white,thin]{}
		to node[pos=-.3]{1} 
		(link3);
	\draw (g.-135) node[link,fill=white,thin]{}
		to node[pos=-.3]{1} 
		(link3);
   \end{scope}
\end{tikzpicture}
\end{aligned}
\hspace{3cm}
\begin{aligned}
\begin{tikzpicture}[xscale=1.3]
	\foreach \i in {0,...,9} {
		\node[circle, minimum size=3pt, inner sep=0, draw] at
		(0,-\i/3+1) (L\i) {};
	}
	\foreach \i in {0,...,6} {
		\node[circle, minimum size=3pt, inner sep=0, fill, black] at (1,-\i/2+1) (M\i) {};
	}
	\foreach \i in {0,...,5} {
		\node[circle, minimum size=3pt, inner sep=0, fill, gray] at
		(2,-3*\i/5+1) (N\i) {};
	}
	\node[below=.5 of L9] (a) {$A$};
	\node[below=.5 of M6] (n) {$N$};
	\node[below=.5 of N5] (b) {$B$};
	\draw[->] (a) to (n);
	\draw[->] (b) to (n);
\begin{scope}[font=\tiny, decoration=brace]
	\node[left=0pt of L0] {$1$};
	\node[left=0pt of L1] {$2$};
	\node[left=0pt of L2] {$3$};
	\node[left=0pt of L3] {$4$};
	\node[left=0pt of L4] {$1$};
	\node[left=0pt of L5] {$2$};
	\node[left=0pt of L6] {$3$};
	\node[left=0pt of L7] {$1$};
	\node[left=0pt of L8] {$2$};
	\node[left=0pt of L9] {$3$};
	\draw[decorate] ($(L3)+(-.4,0)$) to node[left] {$f$} ($(L0)+(-.4,0)$);
	\draw[decorate] ($(L6)+(-.4,0)$) to node[left] {$g$} ($(L4)+(-.4,0)$);
	\draw[decorate] ($(L9)+(-.4,0)$) to node[left] {$h$} ($(L7)+(-.4,0)$);
	\node[right=0pt of N0] {$1$};
	\node[right=0pt of N1] {$2$};
	\node[right=0pt of N2] {$3$};
	\node[right=0pt of N3] {$4$};
	\node[right=0pt of N4] {$5$};
	\node[right=0pt of N5] {$6$};
\end{scope}
\begin{scope}[shorten <= 2pt, shorten >= 2pt, ->,>=stealth]
	\draw (L0) to (M1);
	\draw (L1) to (M2);
	\draw (L2) to (M3);
	\draw (L3) to (M3);
	\draw (L4) to (M1);
	\draw (L5) to (M2);
	\draw (L6) to (M5);
	\draw (L7) to (M3);
	\draw (L8) to (M5);
	\draw (L9) to (M6);
	\draw (N0) to (M0);
	\draw (N1) to (M1);
	\draw (N2) to (M3);
	\draw (N3) to (M4);
	\draw (N4) to (M4);
	\draw (N5) to (M6);
\end{scope}
\end{tikzpicture}
\end{aligned}
\end{equation}
The picture is then described by a pair of functions $A \to N$, $B \to N$, that
say how the wires on the boxes connect to the intermediate nodes. Writing
this pair as $A \to N \from B$, we call this a cospan. 

It was already noticed by Carboni and Walters that these two approaches should
be similiar, and aspects of this cospan idea have appeared in almost all the references
above. In this paper we pin down the exact relationship. To do so
requires a thorough investigation of hypergraph categories and their functors,
including discussion of self-dual compact closed structure, free hypergraph
categories, a factorization system on hypergraph functors, and a coherence
theorem for hypergraph categories. Let us be a bit more precise.

\subsection{Composition, wiring diagrams, and cospans}

What is most relevant about the above diagrams is that they can be
\emph{composed}: new diagrams can be built from old. Let's explore how
composition works for both hypergraph categories and cospan-algebras.

We refer to the primitives that represent wires merging, initializing, splitting, and
terminating as \emph{Frobenius generators}, and when their composites obey laws
reflecting the above intuition about interconnection, we call the resulting
structure a \emph{special commutative Frobenius monoid}. A hypergraph category is a
symmetric monoidal category in which every object is equipped with the structure of a special
commutative Frobenius monoid in a way compatible with the monoidal product.

The monoidal structure gives notions of composition that come from
concatenation: we may build new diagrams by placing them end to end---the
categorical composition---or side by side---the monoidal product. The Frobenius
generators, as special morphisms, take care of the network structure.

There is another perspective, however: that of substitution.
Below is a pictorial representation of the sort of composition that makes sense in categories, monoidal categories, traced monoidal categories, and hypergraph categories.
\begin{equation}\label{eqn.various_shapes}
\begin{aligned}
\begin{tikzpicture}
\begin{scope}[font=\footnotesize]
  \begin{scope}[oriented WD, bb port sep=1, bb port length=2.5pt, bb min width=.4cm, bby=.2cm, inner xsep=.2cm, x=.5cm, y=.3cm, text height=1.5ex, text depth=.5ex]
  	\node[bb={1}{1}] (Catf) {$f$};
  	\node[bb={1}{1}, right=.5 of Catf] (Catg) {$g$};
  	\node[bb={1}{1}, right=.5 of Catg] (Cath) {$h$};
  	\node[bb={0}{0}, fit=(Catf) (Cath)] (Cat) {};
  	\node[coordinate] at (Cat.west|-Catf_in1) (Cat_in1) {};
  	\node[coordinate] at (Cat.east|-Catg_out1) (Cat_out1) {};
  	\draw[shorten <=-2pt] (Cat_in1) -- (Catf_in1);
  	\draw (Catf_out1) -- (Catg_in1);
  	\draw (Catg_out1) -- (Cath_in1);
  	\draw[shorten >=-2pt] (Cath_out1) -- (Cat_out1);
  	\node[bb={2}{1}, above right=-.5 and 3 of Cath] (Monf) {$f$};
  	\node[bb={1}{2}, below=1 of Monf] (Mong) {$g$};
		\node[bb={2}{1}] at ($(Monf)!.5!(Mong)+(2,0)$) (Monh) {$h$}; 
  	\node[bb={0}{0}, fit=(Monf) (Mong) (Monh)] (Mon) {};
  	\node[coordinate] at (Mon.west|-Monf_in1) (Mon_in1) {};
  	\node[coordinate] at (Mon.west|-Monf_in2) (Mon_in2) {};
  	\node[coordinate] at (Mon.west|-Mong_in1) (Mon_in3) {};
  	\node[coordinate] at (Mon.east|-Monh_out1) (Mon_out1) {};
  	\node[coordinate] at (Mon.east|-Mong_out2) (Mon_out2) {};
  	\draw[shorten <=-2pt] (Mon_in1) -- (Monf_in1);
  	\draw[shorten <=-2pt] (Mon_in2) -- (Monf_in2);
  	\draw[shorten <=-2pt] (Mon_in3) -- (Mong_in1);
  	\draw[shorten >=-2pt] (Monh_out1) -- (Mon_out1);
  	\draw[shorten >=-2pt] (Mong_out2) -- (Mon_out2);
		\draw (Monf_out1) to (Monh_in1);
		\draw (Mong_out1) to (Monh_in2);
  	\node[bb={2}{1}, right= 5.5 of Monf] (Trf) {$f$};
  	\node[bb={1}{2}, below=1 of Trf] (Trg) {$g$};
		\node[bb={2}{2}] at ($(Trf)!.5!(Trg)+(2,0)$) (Trh) {$h$}; 
  	\node[bb={0}{0}, fit={($(Trf.north west)+(-.5,1)$) (Trg) ($(Trh.north east)+(.5,0)$)}] (Tr) {};
  	\node[coordinate] at (Tr.west|-Trf_in2) (Tr_in1) {};
  	\node[coordinate] at (Tr.west|-Trg_in1) (Tr_in2) {};
  	\node[coordinate] at (Tr.east|-Trh_out2) (Tr_out1) {};
  	\node[coordinate] at (Tr.east|-Trg_out2) (Tr_out2) {};
  	\draw[shorten <=-2pt] (Tr_in1) -- (Trf_in2);
  	\draw[shorten <=-2pt] (Tr_in2) -- (Trg_in1);
  	\draw[shorten >=-2pt] (Trh_out2) -- (Tr_out1);
  	\draw[shorten >=-2pt] (Trg_out2) -- (Tr_out2);
		\draw (Trf_out1) to (Trh_in1);
		\draw (Trg_out1) to (Trh_in2);
  	\draw let \p1=(Trh.east), \p2=(Trf.north west), \n1=\bbportlen, \n2=\bby in
  		(Trh_out1) to[in=0] (\x1+\n1,\y2+\n2) -- (\x2-\n1,\y2+\n2) to[out=180] (Trf_in1);
  \end{scope}
  \begin{scope}[unoriented WD, penetration=0, pack outside color=black, pack inside color=white, link size=2pt]
  	\node[pack, above right=-.5 and 3.3 of Trf] (Hypf) {$f$};
  	\node[pack, right=.5 of Hypf] (Hypg) {$g$};
		\node[pack] at ($(Hypf)!.5!(Hypg)+(0,-.8)$) (Hyph) {$h$};
  	\node[outer pack, inner xsep=3pt, inner ysep=1pt, fit=(Hypf) (Hypg) (Hyph)] (Hyp) {};
  	\node[link] at ($(Hypg.30)!.5!(Hyp.30)$) (link1) {};
  	\node[link,  left=.1 of Hyph.west] (link2) {};
  	\draw (Hypg) -- (link1);
  	\draw[shorten >= -2pt] (link1) to[bend right] (Hyp.20);
  	\draw[shorten >= -2pt] (link1) to[bend left] (Hyp.45);
		\draw (link2) -- (Hyph);
  	\draw[shorten >= -2pt] (Hypf) -- (Hyp);
  	\draw[shorten >= -2pt] (Hyph) -- (Hyp);
		\draw (Hypf) -- (Hyph);
		\draw (Hypf) -- (Hypg);
		\draw (Hypg) -- (Hyph);
  \end{scope}
  \begin{scope}[text height=1.5ex, text depth=.5ex, align=center]
  	\node[below=.65 of Cat.south] (Cat name) {category};
  	\node[text width=1.5cm] at (Cat name-|Mon) {monoidal category};
  	\node[text width=2.5cm] at (Cat name-|Tr) {traced monoidal\\category};
  	\node[text width=2cm] at (Cat name-|Hyp) {hypergraph category};
	\end{scope}
\end{scope}
\end{tikzpicture}
\end{aligned}
\end{equation}
The above pictures are known as wiring diagrams. Here we think of the outer box
as of the same nature as the inner boxes, which allows \emph{substitution} of
one wiring diagram into another.
\begin{equation}\label{eqn.subst}
\begin{tikzpicture}[unoriented WD, penetration=0, pack inside color=white, link size=2pt, font=\footnotesize, spacing=20pt, baseline=(leadsto)]
\begin{scope}[blue]
	\node[pack] (Hf) {};
	\node[pack, right=.5 of Hf] (Hg) {};
	\node[pack] at ($(Hf)!.5!(Hg)+(0,-.8)$) (Hh) {};
	\node[outer pack, inner xsep=3pt, inner ysep=1pt, fit=(Hf) (Hg) (Hh)] (H) {};
	\node[link, color=blue] at ($(Hg.30)!.5!(H.30)$) (link1) {};
	\node[link, color=blue,  left=.1 of Hh.west] (link2) {};
	\draw (Hg) -- (link1);
	\draw[shorten >= -2pt] (link1) to[bend right] (H.20);
	\draw[shorten >= -2pt] (link1) to[bend left] (H.45);
	\draw (link2) -- (Hh);
	\draw[shorten >= -2pt] (Hf) -- (H);
	\draw[shorten >= -2pt] (Hh) -- (H);
	\draw (Hf) -- (Hh);
	\draw (Hf) -- (Hg);
	\draw (Hg) -- (Hh);
\end{scope}
\begin{scope}[pack outside color=black]
	\node [pack, right=2 of H.east] (Gx) {};
	\node [pack, right=1 of Gx] (Gy) {};
	\node [outer pack, inner xsep=3pt, inner ysep=3pt, fit=(Gx) (Gy)] (G) {};
	\node [link] at ($(Gx)!.5!(Gy)$) (link3) {};
	\draw (Gx.70) to[out=70, in=110] (link3);
	\draw (Gx.20) to[out=20, in=160] (link3);
	\draw (Gx.270) -- ($(G.south-|Gx.270)$);
	\draw (Gx.150) -- (G.170);
	\draw (link3) -- (Gy.west);
	\draw (Gy.east) -- (G.east);
	\draw[blue, dotted] (H.60) -- (Gx.north);
	\draw[blue, dotted] (H.-60) -- (Gx.south);
	\node[pack, right=4 of G.north east] (If) {};
	\node[pack, right=.5 of If] (Ig) {};
	\node[pack] at ($(If)!.5!(Ig)+(0,-.8)$) (Ih) {};
	\node[link,  left=.1 of Ih.west] (link4) {};
	\node [pack, right=.5 of Ig] (Iy) {};
	\node [outer pack, inner xsep=3pt, inner ysep=1pt, fit=(If) (Ig) (Ih) (Iy)] (I) {};
	\draw[shorten >= -2pt] (If) -- (I);
	\draw (If) -- (Ih);
	\draw (Ih) -- (link4);
	\draw (Ih.270) -- ($(I.south-|Ih.270)$);
	\draw (If) -- (Ig);
	\draw (Ig) -- (Ih);
	\draw (Ig) -- (Iy);
	\draw (Iy.0) to[out=0,in=180] (I.0);
	\node at ($(G.east)!.5!(I.west)$) (leadsto) {$\leadsto$};
\end{scope}
\end{tikzpicture}
\end{equation}
More formally, boxes, wiring diagrams, and substitution can be represented as objects, morphisms, and composition in an operad. The rules of this substitution---e.g.\ whether or not the ``special rule''
$
\begin{tikzpicture}[x=.2cm, baseline=(bot)]
	\node[link] (l1) {};
	\node[link, right=1 of l1] (l2) {};
	\draw (l1) -- +(-1,0);
	\draw (l2) -- +(1,0) node[coordinate] (r) {};
	\draw (l1) to[bend right] node[below=-3pt] (bot) {} (l2);
	\draw (l1) to[bend left] (l2);
	\node [coordinate, right=4 of r] (p1) {};
	\node [coordinate, right=1 of p1] (p2) {};
	\draw (p1) -- (p2);
	\node[font=\footnotesize] at ($(r)!.5!(p1)$) {$=$};
\end{tikzpicture}
$
holds, a question one might ask themselves if checking the details of \cref{eqn.subst}---are controlled by this operad. The above operadic viewpoint on wiring diagrams was put forth by Spivak and collaborators \cite{Spivak:2013b,Rupel.Spivak:2013a,Vagner.Spivak.Lerman:2015a}. In particular it was shown in \cite{Spivak.Schultz.Rupel:2016a} that the operad governing traced monoidal categories is $\Cat{Cob}$, the operad of oriented 1-dimensional cobordisms.

In this paper we prove a similar result: the operad governing hypergraph categories is $\cospan[]$. Informally, what this means is that there is a one-to-one correspondence between the wiring diagrams that can be interpreted in a hypergraph category $\cat{H}$---or more precisely, equivalence classes thereof---and cospans labeled by the objects of $\cat{H}$. 

Strictly speaking, every morphism in a category---including in a hypergraph category---has a domain and codomain, and thus should be represented as a two-sided figure, say a box with left and right sides, just like in the first three cases of \cref{eqn.various_shapes}. However, ``morally speaking'' (in the sense of \cite{cheng2004mathematics}), a morphism $f\in\cat{H}$ in a hypergraph category is indexed not by a pair of objects $x_1,x_2\in\ob\cat{H}$, serving as the domain and codomain of $f$, but instead by a finite set $\{x_1,\ldots,x_n\}\ss\ob\cat{H}$ of objects, which one can visualize as an ``o-mane'' for $f$, e.g.:
\[
\begin{tikzpicture}
	\node[circle, draw, inner sep=3pt][font=\footnotesize] (circ) {$f$};
	\foreach \i [evaluate=\i as \ii using {int(\i/60)}] in {0, 60, ..., 300} {
		\draw (circ.\i) -- +(\i:3pt) node[pos=2.5][font=\tiny] {$x_\ii$};
	}
\end{tikzpicture}
\]
The reason not to distinguish between inputs and outputs is that the structures
and axioms of hypergraph categories allow us to ``bend arrows'' arbitrarily, as
we see in the difference between \cref{eq.keyhyp1} and \cref{eq.keyhyp2}.
The axioms of hypergraph categories ensure that these two diagrams denote the same composite morphism: directionality is irrelevant.

Thus we think of the cospan representation as an \emph{unbiased} viewpoint on hypergraph categories. As an analogy, consider the case of ordinary monoids. A monoid is usually presented as a set $M$ together with a binary operation $*\colon M\times M\to M$ and a constant, or 0-ary operation, $e\in M$, satisfying three equations. Once this structure is in place, one can uniquely define an $n$-ary operation, for any other $n$, by iterating the 2-ary operation. 

An unbiased viewpoint on monoids is one in which all the $n$-ary operations are put on equal footing, rather than having 0- and 2-ary morphisms be special. One such approach is to say that a monoid is an algebra on the $\List$ monad: it is a set $X$ equipped with a function $h\colon\List(X)\to X$ satisfying the usual monad-algebra equations. The 0-ary and 2-ary case are embedded in this structure as $h$ applied to lists of length $0$ and $2$, respectively. Another unbiased approach is to use operads, which gives a very simple description: a monoid is an algebra on the terminal operad.

We similarly use cospan-algebras in this article to provide an unbiased
viewpoint on hypergraph categories. However, doing so has a cost: while
hypergraph categories and the functors between them are roughly cospan-algebras,
the corresponding statement does not hold when one considers 2-categorical
aspects. In other words, the natural transformations between hypergraph functors
are not visible in the cospan formulation. Indeed, one can consider the category
of cospan-algebras as a decategorification of the 2-category $\hhyp$ of
hypergraph categories.

\subsection{Statement of main theorems}\label{sec.state_main_theorems}

Our first theorem is a strictification theorem. If $\cat{H}$ is a strict
hypergraph category, an \emph{objectwise-free} ($\of$) structure on $\cat{H}$ is
a set $\Lambda$ and a monoid isomorphism $i\colon\List(\Lambda)\cong\ob\cat{H}$;
in this case we say that $\cat{H}$ is $\of$ or \emph{objectwise-free} on
$\Lambda$. Let $\hhyp_\of$ denote the 2-category that has $\of$-hypergraph
categories as objects and for which 1- and 2-morphisms are those between
underlying hypergraph categories. In other words, we have a full and faithful
functor $U\colon\hhyp_\of\to\hhyp$. The strictification theorem says that every
hypergraph category is equivalent to an $\of$-hypergraph category.

\begin{theorem}\label[theorem]{thm.2_equiv}
The 2-functor $U\colon\hhyp_\of\to\hhyp$ is a 2-equivalence.
\end{theorem}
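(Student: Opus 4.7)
The plan is to unpack what it means for $U$ to be a 2-equivalence: it must be (i) locally an equivalence on hom-categories, and (ii) essentially surjective on objects up to equivalence. Point (i) is essentially free: the author has already noted that $U$ is full and faithful, and since by construction the $1$- and $2$-morphisms in $\hhyp_\of$ are exactly those between the underlying hypergraph categories, $U$ induces an isomorphism, not merely an equivalence, on each hom-category. So the entire content of the theorem is essential surjectivity: every hypergraph category $\cat{H}$ is equivalent in $\hhyp$ to an $\of$-hypergraph category.

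For essential surjectivity I would give a direct construction. Let $\Lambda \coloneqq \ob\cat{H}$ and define $\cat{H}'$ to have object set $\List(\Lambda)$, with hom-sets
\[
\cat{H}'\bigl((x_1,\ldots,x_m),(y_1,\ldots,y_n)\bigr) \coloneqq \cat{H}\bigl(x_1\otimes\cdots\otimes x_m,\; y_1\otimes\cdots\otimes y_n\bigr),
\]
and composition inherited from $\cat{H}$. The monoidal product on $\cat{H}'$ is concatenation of lists, which is strictly associative and unital, with unit the empty list. The evident ``tensor everything'' functor $F\colon\cat{H}'\to\cat{H}$ sending $(x_1,\ldots,x_n)$ to $x_1\otimes\cdots\otimes x_n$ and a morphism to itself is strict symmetric monoidal, bijective on hom-sets, and essentially surjective (every $x\in\ob\cat{H}$ is literally the image of the singleton list $[x]$). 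Equipping $\Lambda$ with the identity map $i\colon\List(\Lambda)\to\ob\cat{H}'$ makes $\cat{H}'$ objectwise-free on $\Lambda$.

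The work left is to transport the hypergraph structure along $F$ so that $F$ promotes to an equivalence in $\hhyp$. For each singleton list $[x]$, reuse the Frobenius structure already present on $x$ in $\cat{H}$; for a list $(x_1,\ldots,x_n)$, define its Frobenius generators as the appropriate monoidal product of those on the $x_i$'s (interleaved with symmetries), which is exactly how hypergraph categories produce Frobenius structure on tensor products from Frobenius structure on factors. The Frobenius axioms for $\cat{H}'$ then reduce term by term to the corresponding axioms in $\cat{H}$, and $F$ preserves all of this structure on the nose. The main obstacle is bookkeeping: one must verify that these transported generators genuinely satisfy the specialness, Frobenius, and compatibility-with-tensor axioms for \emph{every} list, not just for generators—this is a coherence-style induction on list length, using the compatibility of Frobenius structures with $\otimes$ in $\cat{H}$ at each step. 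Once this is done, $F$ is a fully faithful, essentially surjective hypergraph functor, hence an equivalence in $\hhyp$, completing the proof.
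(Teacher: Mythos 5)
Your proposal is correct and takes essentially the same route as the paper: reduce to essential surjectivity (since $U$ is full and faithful by construction), build the Mac Lane strictification on object set $\List(\ob\cat{H})$, transport the Frobenius structure to lists as tensor products of the structures on the entries, and check that the comparison functor is a hypergraph equivalence. Two details to tighten: your functor $F\colon\cat{H}'\to\cat{H}$ is strong, not strict, symmetric monoidal (since $\cat{H}$ is not strict, preservation of the Frobenius generators goes through the coherence maps rather than ``on the nose''), and the case of the monoidal unit (the empty list, whose Frobenius structure in the strict $\cat{H}'$ is forced to consist of identities) is exactly where the unit coherence axiom $\eta_I=\id_I=\epsilon_I$ is needed for $F$ to preserve the structure---a point the paper singles out as the reason that axiom is part of the definition.
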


Our main theorem says that the category of cospan-algebras is a decategorification of $\hhyp_\of$; in particular that it is isomorphic to the underlying 1-category $\hyp_\of$ of $\of$-hypergraph categories and all hypergraph functors between them. Before we can state this theorem, we need to say exactly what we mean by the category of cospan-algebras.

Let $\Lambda$ be a set; we think of this as a set of wire labels. By a
\emph{$\Lambda$-labeled finite set}, we mean a nonnegative integer $m\in\nn$ and a
function $x\colon\ul{m}\to\Lambda$, where $\ul{m}\coloneqq\{1,\ldots,m\}$; in
other words, just a list of elements in $\Lambda$. Let $\cospan$ denote
the category whose objects are $\Lambda$-labeled finite sets $(m,x)$ and whose
morphisms $f\colon (m,x)\to(n,y)$ are labeled cospans, i.e.\ isomorphism classes of commutative diagrams%
\footnote{Two labeled cospans $(f_1,p,z,f_2)$ and $(f_1',p',z',f_2')$ as in \cref{eqn.def_cospan} are considered equivalent if there is a bijection $i\colon p\cong p'$ with $f_1'=f_1\cp i$, $f_2'=f_2\cp i$, and $z=i\cp z'$.
\label[footnote]{fn.equiv_cospans}}
\begin{equation}\label{eqn.def_cospan}
\begin{tikzcd}[row sep=15pt]
	\ul{m}\ar[dr, bend right, "x"']\ar[r, "f_1"]&\ul{p}\ar[d, "z"]&\ul{n}\ar[dl, bend left, "y"]\ar[l, "f_2"']\\
	&\Lambda
\end{tikzcd}
\end{equation}
Let $\hyp_{\of(\Lambda)}\ss\hyp$ denote the subcategory of identity-on-objects functors between hypergraph categories that are objectwise-free on $\Lambda$. We will prove that this category is isomorphic to that of lax monoidal functors $a\colon\cospan\to\smset$:
\begin{equation}\label{eqn.iso_one_hyp}
	\hyp_{\of(\Lambda)}\cong\Cat{Lax}(\cospan,\smset).
\end{equation}

Maps in $\hyp_\of$ are just hypergraph functors $F\colon\cat{H}\to\cat{H}'$ between hypergraph categories that happen to be $\of$. In particular they need not send generators to generators; instead they send each generator in $\cat{H}$ to an arbitrary object, which is identified with a \emph{list of generators} in $\cat{H}'$. Let $\smset_\List$ denote the Kleisli category of the list-monad, i.e.\ the category whose objects are sets, e.g.\ $\Lambda$, and for which a morphism from $\Lambda$ to $\Lambda'$ is a function $\Lambda\to\List(\Lambda')$. We will explain that the on-objects part of $F$ induces a functor $\cospan[\ob(F)]\colon\cospan[\Lambda]\to\cospan[\Lambda']$, and that the on-morphisms part of $F$ induces a monoidal natural transformation $\alpha$
\[
\begin{tikzcd}[row sep=0pt]
	\cospan[\Lambda]\ar[dd, "{\cospan[\ob(F)]}"']\ar[rd, bend left=10pt, "a"]\\
	\ar[r, phantom, "\Downarrow\scriptstyle\alpha"]&\smset\\
	\cospan[\Lambda']\ar[ru, bend right=10pt, "a'"']
\end{tikzcd}
\]
Moreover, we will show that every morphism of cospan-algebras arises in this way.

Using the Grothendieck construction, we can package the above isomorphisms
\eqref{eqn.iso_one_hyp} into a single one; this is our second main theorem.

\begin{theorem}\label[theorem]{thm.iso_hypof_cospan_alg}
There is an isomorphism of 1-categories,
\[
  \hyp_\of\quad\cong\quad\int^{\Lambda\in\smset_\List}\Cat{Lax}(\cospan,\smset).
\]
\end{theorem}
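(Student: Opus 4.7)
The plan is to combine the fiberwise isomorphism \eqref{eqn.iso_one_hyp}, which we must establish for each fixed label set $\Lambda$, with a systematic description of how hypergraph functors between $\of$-hypergraph categories with \emph{different} generating sets encode data in $\smset_\List$. The Grothendieck construction then glues these fiberwise equivalences into the global isomorphism stated in the theorem. The argument thus splits naturally into three pieces: the fibers, the transitions between fibers, and the assembly.

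For the fibers, we must construct mutually inverse functors between $\hyp_{\of(\Lambda)}$ and $\Cat{Lax}(\cospan,\smset)$. Given a lax monoidal functor $a$, we build a hypergraph category $\cat{H}_a$ with object monoid $\List(\Lambda)$ and with homs obtained by evaluating $a$ on a combined cospan whose apex carries the domain and codomain wires together; composition is defined using the cospan pushout followed by the laxator of $a$, while the Frobenius unit, multiplication, counit, and comultiplication at each generator arise from the canonical merging and splitting cospans in $\cospan$. In the opposite direction, an $\of$-hypergraph category $\cat{H}$ produces a functor $a_\cat{H}$ by sending each cospan $\ul{m}\to\ul{p}\from\ul{n}$ to the morphism of $\cat{H}$ obtained by interpreting the apex as a composite of Frobenius spiders and the boundary legs via the compact-closed cups and caps. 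Verifying that these constructions are mutually inverse is the substantive content, amounting to showing that the hypergraph axioms translate exactly into lax monoidal functoriality out of $\cospan$.

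For the between-fiber description, consider a hypergraph functor $F\colon\cat{H}\to\cat{H}'$ between $\of$-hypergraph categories on $\Lambda$ and $\Lambda'$ respectively. Its restriction to objects is a monoid homomorphism $\List(\Lambda)\to\List(\Lambda')$, which by the free monoid property corresponds to an arbitrary function $\ob(F)\colon\Lambda\to\List(\Lambda')$, i.e.\ a morphism in $\smset_\List$. This function induces a monoidal functor $\cospan[\ob(F)]\colon\cospan\to\cospan[\Lambda']$ by relabeling the apex wire-by-wire and expanding each label into its list. The on-morphisms data of $F$ then corresponds, under the fiberwise isomorphism, to a monoidal natural transformation $\alpha\colon a_\cat{H}\Rightarrow a_{\cat{H}'}\cp\cospan[\ob(F)]$. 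One then verifies that composition of hypergraph functors matches composition in the Grothendieck fibration of the assignment $\Lambda\mapsto\Cat{Lax}(\cospan,\smset)$, and that every pair $(\ob(F),\alpha)$ arises from a unique hypergraph functor, which uses essentially the same cospan-to-spider dictionary as the fiberwise isomorphism.

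Finally, the Grothendieck construction packages an object $(\Lambda,a)$ together with a morphism $(\ob(F),\alpha)$ exactly as above, immediately yielding the claimed isomorphism of 1-categories. I expect the main obstacle to lie in the fiberwise isomorphism: precisely matching the Frobenius spider generators with the canonical multiplication and comultiplication cospans, and showing that the various coherence diagrams (Frobenius law, specialness, compatibility with the monoidal product) correspond bijectively to the laxator coherences, is delicate bookkeeping. Once that dictionary is in place, the functoriality in $\smset_\List$ and the Grothendieck assembly are routine, if tedious.
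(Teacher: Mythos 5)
Your proposal is correct and follows essentially the same route as the paper: a fiberwise correspondence $\hyp_{\of(\Lambda)}\cong\lax(\cospan,\smset)$ (constructed exactly as your cospan-to-Frobenius dictionary, via names of morphisms and the counit $\cospan\to\cat{H}$), naturality in $\Lambda\in\smset_\List$, and assembly by the Grothendieck construction. The only difference is organizational: where you directly decompose a hypergraph functor into its object part in $\smset_\List$ plus a fiber morphism, the paper packages this as the statement that $\gens\colon\hyp_\of\to\smset_\List$ is a split Grothendieck fibration via the $(\io,\ff)$ factorization, which amounts to the same verification.
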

\smallskip

\subsection{Plan of paper}
This paper has three remaining sections. In \cref{sec.review} we formally
introduce the key concepts, cospan-algebras
(\textsection\ref{sec.cospan_and_algs}), Frobenius monoids
(\textsection\ref{sec.frob_monoids}), and hypergraph categories
(\textsection\ref{sec.hypergraph_cats}), giving a few basic examples, and
pointing out some basic facts. In particular, in \cref{sec.critiques} we note
that hypergraph categories do not obey the principle of equivalence, which we
argue motivates the cospan-algebra perspective, and in \cref{sec.word_on_operads} we remark on the interaction between the operadic and the monoidal
categorical perspectives.

\cref{sec.properties} develops the theory of hypergraph categories.  We discuss
four key properties. In \cref{sec.sdccc}, we see that hypergraph categories have a natural
self-dual compact closed structure, so morphisms may be described by their
so-called names. In \cref{sec.free_cats}, we show that $\cospan$ is the free
hypergraph category over $\Lambda$. In \cref{sec.io_ff}, we see that there is an
(identity-on-objects, fully faithful) factorization of any hypergraph functor,
and this implies that the category of hypergraph categories is fibred over
$\smset_\List$. In \cref{sec.strictification}, we prove a coherence theorem,
showing that the 2-category of hypergraph categories is 2-equivalent to the
2-category of those that are objectwise-free.

The final section, \cref{sec.hyp_cospan_alg}, is devoted to proving that the
1-categories of cospan-algebras and objectwise-free hypergraph categories are
equivalent. We do this by first showing how cospan-algebras may be constructed
from hypergraph categories (\textsection\ref{ssec.hcca}), then how hypergraph
categories may be constructed from cospan-algebras
(\textsection\ref{ssec.cahc}), and finally that these two constructions define
an equivalence of categories (\textsection\ref{ssec.equivalences}).

\subsection{Acknowledgments}
The authors thank John Baez, Steve Lack, Marcelo Fiore, Samson Abramsky, and Maru Sarazola for
useful conversations. This work was supported by USA AFOSR grants
FA9550-14-1-0031 and FA9550-17-1-0058.


\subsection{Notation and terminology}
\begin{itemize}
\item We generally denote composition in 1-categories using diagrammatic order,
writing $f\cp g$ rather than $g\circ f$.

\item If $\cat{C}$ is a category, we may write $c\in\cat{C}$ to mean $c$ is an object in $\cat{C}$.

\item 
Unless otherwise indicated, we use $\otimes$ to denote the monoidal product in a
monoidal category, and $I$ to denote the monoidal unit.

\item 
By $\smset$ we always mean the symmetric monoidal category of (small) sets and
functions, with the Cartesian product monoidal structure $(\{*\},\times)$.

\item 
By $\finset$ we always mean the (strict skeleton of the) symmetric monoidal
category of finite sets and functions, with the coproduct monoidal structure
$(\varnothing,+)$.

\item 
Following John Baez and his students \cite{moeller2018noncommutative}, we refer to the coherence maps for lax monoidal functors as \emph{laxators}; this is in keeping with widely-used terms like unitor, associator, etc.

\item 
For any natural number $m\in\nn$, we abuse notation to also let
$\ul{m}\coloneqq\{1,\ldots,m\}$; in particular $\ul{0}=\varnothing$. If the
coproduct of $X$ and $Y$ exists, we denote its coproduct by $X+Y$; given
morphisms $f\colon X\to Z$ and $g\colon Y\to Z$, we denote the universal
morphism from the coproduct by $\copair{f,g}\colon X+Y\to Z$.

\item 
If $\Lambda$ is a set, we denote by $\List(\Lambda)$ the set of pairs $(m,x)$,
where $m\in\nn$ and $x\colon\ul{m}\to\Lambda$; we may also denote $(m,x)$ by
$[x_1,\ldots,x_m]$. We may denote the list simply as $x$, in which case it will
be convenient to denote the indexing set $\ul{m}$ as $|x|$. It is well-known
that $\List$ is a functor $\List\colon\smset\to\smset$, and extends to a monad
when equipped with the singleton list transformation
$\sing\colon\id_\smset\to\List$ and the flatten transformation
$\flatten\colon\List\cp\List\to\List$. We denote the concatenation of lists $x$
and $y$ by $x\concat y$, and we often denote the empty list by $\emptylist$.

\item 
Given a functor $F\colon \cat{C} \to \smcat\op$, we write $\int^\cat{C} F\to\cat{C}$ for the
\define{Grothendieck construction} on $F$; this is the category over $\cat{C}$ that has objects given by pairs
$(X,a)$, where $X \in \cat{C}$ and $a \in FX$, and morphisms $(X,a) \to (Y,b)$ given by pairs $(f,g)$, where $f\colon X
\to Y$ is a morphism in $\cat{C}$ and $g\colon a\to F(f)(b)$ is a morphism in $FX$.
\end{itemize}

\chapter{Basic definitions: cospan-algebras and hypergraph categories}\label{sec.review}
In this section we review the definitions of the basic concepts we will use:
cospan-algebras (\textsection\ref{sec.cospan_and_algs}), Frobenius structures
(\textsection\ref{sec.frob_monoids}), and hypergraph categories
(\textsection\ref{sec.hypergraph_cats}). We then discuss some perhaps
undesirable ways in which hypergraph categories do not behave well with respect
to equivalence of categories, hence motivating the cospan-algebra viewpoint
(\textsection\ref{sec.critiques}), and also briefly touch on the (disappearing)
role of operads in this paper (\textsection\ref{sec.word_on_operads}).

\section{Cospans and cospan-algebras}\label{sec.cospan_and_algs}

The main character in our story is $\cospan$, where $\Lambda$ is an arbitrary set. 
We already defined its objects, $\Lambda$-labelled finite sets $(m,x)$, and its morphisms,
which we call \emph{labeled cospans}, in \cref{eqn.def_cospan}. The composition
formula is given by pushout; see \cite{fong2018seven}. The monoidal unit is
denoted $\emptylist$ and defined to be $(0,!)$, where
$!\colon\emptyset\to\Lambda$ is the unique function. The monoidal product is
denoted $\concat$ and defined to be
$(m_1,x_1)\concat(m_2,x_2)\coloneqq(m_1+m_2,\copair{x_1,x_2})$. Note that
$\cospan$ is the category of cospans in the category $\finset_\Lambda$, in
which objects are $\Lambda$-labelled finite sets $(m,x)$, and morphisms are simply
\emph{functions} between them that respect the typing $x$.

When $\Lambda$ is a one-element set, we can suppress it from the notation and
simply write $\cospan[]$. It is the usual category whose objects are finite sets
and whose morphisms are isomorphism classes of cospans, as discussed in \cref{fn.equiv_cospans}.

Note that flattening lists is a coproduct operation. Given
$x\colon\ul{m}\to\List(\Lambda)$, we have $m$-many indexing sets $|x_1|, \ldots,
|x_m|$ and maps $|x_i|\To{x_i}\Lambda$. The flattened list $\flatten(x)$ is indexed by the coproduct of the indexing sets $|x_i|$, and its content is given by the universal map:
\[
  [x_1,\ldots,x_m]\colon\sum_{i\in\ul{m}}|x_i|\to\Lambda.
\]

Given a function $f\colon\Lambda\to\List(\Lambda')$, we define a functor
$\cospan[f]\colon\cospan\to\cospan[\Lambda']$ as follows. For an object
$(m,x)\in\ob(\cospan)$, we obtain a function
$\ul{m}\To{x}\Lambda\To{f}\List(\Lambda')$, and hence
$\flatten(x\cp f)\in\List(\Lambda')=\ob(\cospan[\Lambda'])$ by applying the monad multiplication. Since $\List$ is functorial, composing a $\Lambda$-labeled cospan with $f$, as shown on the left, induces a $\Lambda'$-labeled cospan as shown on the right, by flattening:
\begin{equation}\label{eqn.cospan_on_kleisli_morphisms}
\begin{aligned}
\begin{tikzcd}
	{|x|}\ar[dr, bend right, "x"']\ar[r]&{|z|}\ar[d, "z"]&{|y|}\ar[dl, bend left, "y"]\ar[l]\\
	&\Lambda 
\end{tikzcd}
\end{aligned}
\hspace{.1in}
\xmapsto{\flatten(-\cp f)}
\hspace{.1in}
\begin{aligned}
\begin{tikzcd}
	\sum_{\ul{m}}|x\cp f|\ar[dr, bend right, "{\flatten(x\cp f)}"']\ar[r]&\sum_{\ul{p}}|z\cp f|\ar[d, "{\flatten(x\cp f)}"]&\sum_{\ul{n}}|y\cp f|\ar[dl, bend left, "{\flatten(y\cp f)}"]\ar[l]\\
	&\Lambda'
\end{tikzcd}
\end{aligned}
\end{equation}

\begin{proposition}\label[proposition]{prop.cospan_setlist_cat}
The above defines a functor $\cospan[-]\colon\smset_{\List}\to\smcat$.
\end{proposition}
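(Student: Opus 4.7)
The plan is to verify the four standard functoriality conditions in turn. First, for each Kleisli morphism $f\colon\Lambda\to\List(\Lambda')$, I would confirm that $\cospan[f]$ as described around \cref{eqn.cospan_on_kleisli_morphisms} is a well-defined functor $\cospan\to\cospan[\Lambda']$. On objects the rule $(m,x)\mapsto\flatten(x\cp f)$ is unambiguous. On morphisms, given a labeled cospan as in \cref{eqn.def_cospan}, the new apex $\sum_{\ul{p}}|z\cp f|$ and its two legs are obtained by applying $\List$ pointwise and then invoking the universal property of coproducts in $\smset$. I would also verify compatibility with the equivalence relation of \cref{fn.equiv_cospans}: a bijection $i\colon\ul{p}\iso\ul{p}'$ witnessing equivalence of two cospans induces, by functoriality of $\List$, a bijection of the new apices that witnesses equivalence of the images.

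Second, I would verify that $\cospan[f]$ preserves identities and composition. Preservation of identities is immediate, since applying $\flatten(-\cp f)$ to an identity cospan yields an identity cospan. For composition, recall that cospan composition is computed by pushout in the category of $\Lambda$-labeled finite sets, which in turn is pushout in $\smset$ equipped with the induced labeling. The key observation is that $(|x|\to\Lambda)\mapsto\sum_{|x|}|x\cp f|$ is a left adjoint (essentially a dependent sum followed by relabeling), and so preserves the relevant pushouts. Hence $\cospan[f]$ preserves cospan composition.

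Third, I would check that $\cospan[-]$ is itself functorial. For the identity Kleisli morphism $\sing\colon\Lambda\to\List(\Lambda)$, the monad unit law $\flatten\cp\List(\sing)=\id_{\List}$ gives $\flatten(x\cp\sing)=x$ for any list $x$, and the analogous identity on cospans shows $\cospan[\sing_\Lambda]=\id_{\cospan}$. For the Kleisli composite $f\cp\List(g)\cp\flatten$ of $f\colon\Lambda\to\List(\Lambda')$ and $g\colon\Lambda'\to\List(\Lambda'')$, the required equality $\cospan[f\cp\List(g)\cp\flatten]=\cospan[f]\cp\cospan[g]$ reduces, on both objects and morphisms, to the monad associativity law $\flatten\cp\List(\flatten)=\flatten\cp\flatten$ together with naturality of $\flatten$.

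The main obstacle is bookkeeping rather than insight: articulating precisely how summation over the apex interacts with pushouts of labeled finite sets, and tracking the $\flatten$'s through the Kleisli composite. Once one recognizes $\flatten(-\cp f)$ as a left adjoint, preservation of cospan composition is automatic; and once one invokes the monad laws for $\List$, the remaining functoriality of $\cospan[-]$ falls out.
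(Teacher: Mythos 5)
Your proposal is correct and follows essentially the same route as the paper: check that each $\cospan[f]$ preserves identities and (via pushouts, since cospan composition is computed by pushout) composition, then check functoriality of $\cospan[-]$ in the Kleisli variable. The paper leaves the pushout step as ``an easy calculation'' and the Kleisli functoriality as ``straightforward''; your colimit-preservation/left-adjoint argument and your appeal to the $\List$-monad unit and associativity laws (modulo a small slip in the order in which you wrote the associativity law) are valid ways of filling in exactly those steps, and your extra check that the construction respects the equivalence relation on cospans is a sensible addition the paper omits.
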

\begin{proof}
We gave the data for the functor on objects $\Lambda\in\smset_\List$, namely
$\Lambda\mapsto\cospan[\Lambda]$, and on morphisms $\Lambda\to\List(\Lambda')$
in \cref{eqn.cospan_on_kleisli_morphisms}. To check that $\cospan[f]$ is a
functor, first note that it sends identity morphisms in $\cospan$ to those in
$\cospan[\Lambda']$. Then, observe that showing it preserves composition reduces
to checking that, for any $a,b,c\in\nn$ and pushout diagram as to the left below
\[
\begin{tikzcd}
	\ul{a}\ar[r]\ar[d]&\ul{c}\ar[d]\ar[ddr, bend left=20pt, "y"]\\
	\ul{b}\ar[r]\ar[drr, bend right=20pt, "x"']&\ul{b}\sqcup_{\ul{a}}\ul{c}\ar[ul, phantom, very near start, "\ulcorner"]\ar[dr, "z"]\\[-15pt]
	&&[-10pt]\List(\Lambda')
\end{tikzcd}
\hspace{1in}
\begin{tikzcd}
	\displaystyle\sum_{i\in\ul{a}}|w(i)|\ar[r]\ar[d]&
	\displaystyle\sum_{k\in\ul{c}}|y(k)|\ar[d]\\
	\displaystyle\sum_{j\in\ul{b}}|x(j)|\ar[r]&
	\displaystyle\sum_{l\in\ul{b}\sqcup_{\ul{a}}\ul{c}}|z(l)|
\end{tikzcd}
\]
where $w\colon\ul{a}\to\List(\Lambda')$ is the composite map, the diagram to the
right is also a pushout. This is an easy calculation. 

It is also straightforward to observe that $\cospan[-]$ is itself functorial:
$\cospan[\id_\Lambda] = \id_{\cospan}$, and if $f,g$ are composable morphisms in
$\smset_\List$, then $\cospan[f]\cp\cospan[g] =\cospan[f\cp g]$.
\end{proof}

\begin{definition}\label{def.cospan_alg}
A \define{cospan-algebra} consists of a set $\Lambda$, called the \emph{label set}, and a lax symmetric monoidal functor
\[
a \colon (\cospan,\concat) \longrightarrow (\smset,\times).
\]

Let $(\Lambda, a)$ and $(\Lambda',a')$ be cospan-algebras. A \emph{morphism} between them consists of a function $f\colon\Lambda\to\List(\Lambda')$ and a monoidal natural transformation $\alpha$ as shown here:
\[
\begin{tikzcd}[row sep=0pt]
	\cospan[\Lambda]\ar[dd, "{\cospan[f]}"']\ar[rd, bend left=10pt, "a"]\\
	\ar[r, phantom, "\Downarrow\scriptstyle\alpha"]&\smset\\
	\cospan[\Lambda']\ar[ru, bend right=10pt, "a'"']
\end{tikzcd}
\]
We write $\calg$ for the category of cospan-algebras and cospan-algebra
morphisms.
\end{definition}

\begin{example}\label{ex.part}
For a set $\Lambda$, we can define the cospan-algebra
$\prt_\Lambda\colon\cospan\to\smset$ represented by the monoidal unit $0$ of
$\cospan$. Explicitly, it is defined on $x\colon m\to\Lambda$ by
\[\prt_\Lambda(x)\coloneqq\cospan(0,x) = 
  \left\{ (p,f) \,\middle|
\begin{tikzcd}[row sep=15pt]
	\ul{p}\ar[d, "z"]&\ul{m}\ar[dl, bend left, "x"]\ar[l, "f"']\\
	\Lambda
\end{tikzcd}
\right\}
\]
the set of ways to partition $m$ into some number of (possibly empty) parts,
respecting $x\colon m \to \Lambda$. This is clearly lax monoidal: given functions $m\to m'$ and $n\to n'$, one obtains a function $m+n\to m'+n'$.
\end{example}

We shall see in \cref{sec.hyp_cospan_alg} that any hypergraph category gives rise to a cospan-algebra.

The following observation is immediate from \cref{def.cospan_alg}.
\begin{proposition}\label[proposition]{prop.calg_fibration}
We have an isomorphism of categories 
\[
\calg \cong \int^{\Lambda \in \smset_\List} \lax(\cospan,\smset).
\]
\end{proposition}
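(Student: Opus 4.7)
The plan is to show this is essentially immediate from unwinding the Grothendieck construction, whose definition is recalled in the notation section. The only real work is (i) packaging $\cospan[-]$ into a functor landing in $\smcat^{\mathrm{op}}$ so that the coend makes sense, and (ii) matching the data on both sides triple by triple.

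First I would construct the functor being integrated. Proposition~\ref{prop.cospan_setlist_cat} gives a (covariant) functor $\cospan[-]\colon\smset_\List\to\smcat$. By post-composing with $\lax(-,\smset)\colon\smcat\to\smcat^{\mathrm{op}}$ (precomposition of lax symmetric monoidal functors), one obtains a functor
\[
\lax(\cospan[-],\smset)\colon\smset_\List\longrightarrow\smcat^{\mathrm{op}},
\qquad \Lambda\mapsto\lax(\cospan[\Lambda],\smset),
\]
sending a Kleisli morphism $f\colon\Lambda\to\List(\Lambda')$ to the functor $a'\mapsto\cospan[f]\cp a'$. Functoriality follows from the functoriality of $\cospan[-]$ established in Proposition~\ref{prop.cospan_setlist_cat}.

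Second, I would apply the Grothendieck construction $\int^{\Lambda\in\smset_\List}\lax(\cospan,\smset)$ as defined in the notation section, and compare the resulting category term-by-term with $\calg$ from Definition~\ref{def.cospan_alg}. On objects, a pair $(\Lambda,a)$ with $\Lambda\in\smset_\List$ and $a\in\lax(\cospan,\smset)$ is precisely a cospan-algebra. On morphisms, a morphism $(\Lambda,a)\to(\Lambda',a')$ in the Grothendieck construction is by definition a pair $(f,\alpha)$ with $f\colon\Lambda\to\List(\Lambda')$ a morphism in $\smset_\List$ and $\alpha\colon a\to\cospan[f]\cp a'$ a morphism in $\lax(\cospan,\smset)$, i.e.\ a monoidal natural transformation fitting into the triangle appearing in Definition~\ref{def.cospan_alg}. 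These are the same data.

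Finally, I would observe that identities and composition agree on both sides: in $\calg$ the identity on $(\Lambda,a)$ is $(\id_\Lambda,\id_a)$ and composition pastes the natural transformations along the corresponding $\cospan[-]$ images, which is exactly the composition rule of the Grothendieck construction with respect to the functor above. The only real pitfall, and the one step worth stating carefully, is matching the variance: because $\lax(\cospan[-],\smset)$ lands in $\smcat^{\mathrm{op}}$, the morphism in the fiber over $\Lambda$ points from $a$ to $\cospan[f]\cp a'$ rather than the reverse, which is precisely the direction of $\alpha$ in Definition~\ref{def.cospan_alg}. Once this is checked, the isomorphism of categories is evident.
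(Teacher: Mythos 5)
Your proposal is correct and takes essentially the same route as the paper, which states the proposition as immediate from \cref{def.cospan_alg}: your argument is exactly that unwinding of the Grothendieck construction applied to $\Lambda\mapsto\lax(\cospan,\smset)$, with the variance handled as the paper intends. The only implicit point worth noting is that precomposition with $\cospan[f]$ must preserve lax symmetric monoidality, which holds because $\cospan[f]$ is a (strong, indeed hypergraph) symmetric monoidal functor, as in \cref{cor.cospan_setlist_hyp}.
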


\section{Special commutative Frobenius monoids}\label{sec.frob_monoids}

In a hypergraph category, every object is equipped with the structure of a special commutative Frobenius monoid, which we call a Frobenius structure.
In this section we recall the definition and give important examples. 

We will represent morphisms in monoidal categories using the string diagrams 
introduced by Joyal and Street \cite{Joyal.Street:1993a}. We draw
$\swap{1em}\colon X \otimes Y\to Y \otimes X$ for the braiding in a symmetric
monoidal category. Diagrams are to be
read left to right; we shall suppress the labels, since we deal with a unique
generating object and a unique generator of each type.  

\begin{definition}\label[definition]{def.scfm}
  A \define{special commutative Frobenius monoid} $(X,\mu,\eta,\delta,\epsilon)$
  in a symmetric monoidal category $(\mathcal C, \otimes, I)$ is an object $X$ of
  $\mathcal C$ together with maps 
\[
\begin{tikzpicture}[spider diagram, dot fill=black]
	\node[spider={2}{1}, label={[below=.5]:{$\mu\colon X\otimes X\to X$}}] (a) {};
	\node[spider={0}{1}, label={[below=.5]:{$\eta\colon I\to X$}}, right=4 of a] (b) {};
	\node[spider={1}{2}, label={[below=.5]:{$\delta\colon X\to X\otimes X$}}, right=4 of b] (c) {};
	\node[spider={1}{0}, label={[below=.5]:{$\epsilon\colon X\to I$}}, right=4 of c] (d) {};
\end{tikzpicture}
\]  
obeying the commutative monoid axioms
\[
  \begin{tikzpicture}[spider diagram,scale=.8]
	\node[spider={2}{1}] (a) {};
	\node[special spider={2}{1}{\leglen}{0}, left=.1 of a_in1] (b) {};
	\draw (a_in1) to (b.east);
	\draw (a_in2) to (b_in1|-a_in2);
	\node[spider={2}{1}, right=3.5 of b] (aa) {};
	\node[special spider={2}{1}{\leglen}{0}, left=.1 of aa_in2] (bb) {};
	\draw (aa_in2) to (bb.east);
	\draw (aa_in1) to (bb_in1|-aa_in1);
	\node[label={[below=1]:{(associativity)}}] at ($(b.west)!.5!(aa.east)$) {$=$};
	\node[spider={2}{1}, right=2.2 of aa_out1] (a) {};
	\node[special spider={0}{1}{\leglen}{0}, left=.1 of a_in1] (b) {};
	\draw (a_in1) to (b.east);
	\draw (a_in2) to ($(a_in2)-(\leglen,0)$);
	\coordinate[right=1 of a_out1] (aa) {};
	\coordinate (bb) at ($(aa)+(3*\leglen,0)$) {};
	\draw (aa) to (bb);
	\node[label={[below=1]:{(unitality)}}] at ($(a_out1)!.5!(aa)$) {$=$};
	\node[spider={2}{1}, right=2.2 of bb] (a) {};
	\coordinate (b1) at ($(a_in1)-(.2,0)$);
	\coordinate (b2) at ($(a_in2)-(.2,0)$);
	\draw (a_in1) to (b1);
	\draw (a_in2) to (b2);
	\node[spider={2}{1}, right=2 of a_out1] (aa) {};
	\coordinate (bb1) at ($(aa_in1)-(.6,0)$);
	\coordinate (bb2) at ($(aa_in2)-(.6,0)$);
	\coordinate (bb) at ($(bb1)!.5!(bb2)$);
	\draw (bb1) to (aa_in2);
	\draw (bb2) to (aa_in1);
	\node[label={[below=1]:{(commutativity)}}] at ($(a_out1)!.5!(bb)$) {$=$};
\end{tikzpicture}
\]
the cocommutative comonoid axioms
\[
\begin{tikzpicture}[spider diagram,scale=.8]
	\node[spider={1}{2}] (a) {};
	\node[special spider={1}{2}{0}{\leglen}, right=.1 of a_out1] (b) {};
	\draw (a_out1) to (b_in1);
	\draw (a_out2) to (b_out1|-a_out2);
	\node[spider={1}{2}, right=2 of b] (aa) {};
	\node[special spider={1}{2}{0}{\leglen}, right=.1 of aa_out2] (bb) {};
	\draw (aa_out2) to (bb_in1);
	\draw (aa_out1) to (bb_out1|-aa_out1);
	\node[label={[below=1]:{(coassociativity)}}] at ($(b.west)!.5!(aa.east)$) {$=$};
	\node[spider={1}{2}, right=3 of aa] (a) {};
	\node[special spider={1}{0}{0}{\leglen}, right=.1 of a_out1] (b) {};
	\draw (a_out1) to (b_in1);
	\draw (a_out2) to ($(a_out2)+(\leglen,0)$);
	\coordinate[right=2 of a] (aa) {};
	\coordinate (bb) at ($(aa)+(3*\leglen,0)$) {};
	\draw (aa) to (bb);
	\node[label={[below=1]:{(counitality)}}] at ($(a_in1)!.5!(bb)$) {$=$};
	\node[spider={1}{2}, right=2 of bb] (a) {};
	\coordinate (b1) at ($(a_out1)+(.2,0)$);
	\coordinate (b2) at ($(a_out2)+(.2,0)$);
	\draw (a_out1) to (b1);
	\draw (a_out2) to (b2);
	\node[spider={1}{2}, right=2 of a] (aa) {};
	\coordinate (bb1) at ($(aa_out1)+(.6,0)$);
	\coordinate (bb2) at ($(aa_out2)+(.6,0)$);
	\coordinate (bb) at ($(bb1)!.5!(bb2)$);
	\draw (aa_out2) to (bb1);
	\draw (aa_out1) to (bb2);
	\node[label={[below=1]:{(cocommutativity)}}] at ($(a_in1)!.5!(bb)$) {$=$};
\end{tikzpicture}
\]
and the Frobenius and special axioms
\[
\begin{tikzpicture}[spider diagram,scale=.8]
	\node[spider={1}{2}] (a) {};
	\node[spider={2}{1}, below right=\leglen*2/3 and 1 of a] (b) {};
	\coordinate (c1) at (a_out1-|b_out1);
	\coordinate (c2) at (a_in1|-b_in2);
	\draw (a_out2) -- (b_in1);
	\draw (a_out1) -- (c1);
	\draw (c2) -- (b_in2);
	\node[special spider={2}{1}{\leglen}{0}, right=2.5 of a_out2] (aa) {};
	\node[special spider={1}{2}{0}{\leglen}, right=.5 of aa] (bb) {};
	\draw (aa_out1) -- (bb_in1);
	\node[spider={1}{2}, right=1.5 of bb_out2] (aaa) {};
	\node[spider={2}{1}, above right=\leglen*2/3 and 1 of aaa] (bbb) {};
	\coordinate (ccc1) at (aaa_out2-|bbb_out1);
	\coordinate (ccc2) at (aaa_in1|-bbb_in1);
	\draw (aaa_out1) -- (bbb_in2);
	\draw (aaa_out2) -- (ccc1);
	\draw (ccc2) -- (bbb_in1);
	\coordinate (h1) at ($(b_out1)!.5!(aa_in1)$);
	\coordinate (h2) at ($(aaa_in1)!.5!(bb_out1)$);	
	\coordinate (h3) at ($(aa_out1)!.5!(bb_in1)$);
	\node at (h1|-aa) {$=$};
	\node at (h2|-aa) {$=$};
	\node[below=.75 of h3] (Frob) {(Frobenius)};
	\node[spider={1}{2}, right=4 of aaa_out1] (aaaa) {};
	\node[spider={2}{1}, right=1 of aaaa] (bbbb) {};
	\draw (aaaa_out1) -- (bbbb_in1);
	\draw (aaaa_out2) -- (bbbb_in2);
	\coordinate[right=2.5 of aaaa] (aaaaa) {};
	\coordinate[right=1 of aaaaa] (bbbbb) {};
	\draw (aaaaa) -- (bbbbb);
	\node at ($(bbbb_out1)!.5!(aaaaa)$) (h4) {$=$};
	\node at (h4|-Frob) {(special)};
\end{tikzpicture}
\]
We say that $(\mu,\eta,\delta,\epsilon)$ is a \define{Frobenius structure} on
$X$, and call these four morphisms \define{Frobenius generators}. We further
refer to any morphism generated from these maps using composition, monoidal product,
identity maps, and braiding maps as a \define{Frobenius map}.
\end{definition}

\begin{example}\label[example]{ex.scFrob_on_I}
  In any symmetric monoidal category, there is a canonical Frobenius structure on
  the monoidal unit $I$. Indeed, the left and right unitors give (equal) isomorphisms
  $\lambda_I=\rho_I\colon I\otimes I\cong I$, so define $\mu_I\coloneqq\rho_I$ and
  $\delta_I\coloneqq\rho_I^{-1}$, and define $\eta_I=\id_I=\epsilon_I$. It is easy to see that this is the only possible Frobenius structure for which $\eta_I=\id_I=\epsilon_I$.
\end{example}

\begin{example}\label{ex.frob_on_strict_I}
  Let $\cat{C}$ be a symmetric monoidal category such that $I \otimes I = I$.
  For example, $\cat{C}$ could be a symmetric monoidal category with one
  object---that is, a commutative monoid considered as a one object category.

  A Frobenius structure on $I$ then consists of maps $\mu,\eta,\delta,\epsilon$,
  all of type $I\to I$. The morphisms $I\to I$ in a monoidal category always
  form a commutative monoid $(M,*,e)$; this follows from an Eckmann-Hilton
  argument \cite{Kelly.Laplaza:1980a}. The axioms of Frobenius structures then
  say that $\mu,\eta,\delta,\epsilon$, as elements of $M$, satisfy $\mu*\eta=e$,
  $\mu*\delta=e$, and $\delta*\epsilon=e$.  This implies that a Frobenius
  structure on the unit of a symmetric monoidal category can be identified with an invertible
  element $\mu$ in the monoid of scalars. 
\end{example}

\begin{example}\label[example]{ex.cospan_scFrob}
Consider the symmetric monoidal category $\cospan[]$. We will construct a
Frobenius structure on the object $1$. To do so, we need to define morphisms
$\mu\colon 1\concat 1\to 1$, $\eta\colon 0\to 1$, $\delta\colon 1\to 1\concat
1$, and $\epsilon\colon 1\to 0$ in $\cospan[]$, and then check that they satisfy
the required equations.
	
Recall that $1 \concat 1 =2$, the two element set $\{1,2\}$. Each Frobenius generator will
be the unique cospan of the required domain and codomain with apex $1$---this is
well-defined because $1$ is terminal in $\finset$.  For example, we take $\mu\colon 2\to 1$
to be the cospan
\[
\begin{tikzpicture}[x=.75cm, y=.75cm]
	\node[circle, minimum size=4pt, inner sep=0, fill, black] at (0,0) (L1) {};
	\node[circle, minimum size=4pt, inner sep=0, fill, black] at (0,1) (L2) {};
	\node[circle, minimum size=4pt, inner sep=0, fill, black] at (1,.5) (M) {};
	\node[circle, minimum size=4pt, inner sep=0, fill, black] at (2,.5) (N) {};
	\draw[function] (L1) to[bend right] (M);
	\draw[function] (L2) to[bend left] (M);
	\draw[function] (N) -- (M);
	\pgfresetboundingbox
	\useasboundingbox (L1.north) rectangle (L2.south-|N);
\end{tikzpicture}
\]
and $\epsilon\colon 1\to 0$ to be the cospan 
\[
\begin{tikzpicture}[x=.75cm, y=.75cm]
	\node[circle, minimum size=4pt, inner sep=0, fill, black] at (0,0) (L) {};
	\node[circle, minimum size=4pt, inner sep=0, fill, black] at (1,0) (M) {};
	\node[minimum size=10pt, inner sep=0, draw, dotted] at (2,0) (R) {};
	\draw[function] (L) to (M);
	\pgfresetboundingbox
	\useasboundingbox (L.north) rectangle (R.south);
\end{tikzpicture}
\]
where the dotted square represents the empty set.

One can then check that the nine equations in \cref{def.scfm} hold: in each case both sides of
the equation represent the unique cospan with apex $\ul{1}$. For example, the
associativity axiom says that the composite cospans $(\mu\otimes 1)\cp\mu$ and $(1\otimes\mu)\cp\mu$ are equal, namely to the cospan
\[
\begin{tikzpicture}[x=.75cm, y=.5cm]
	\node[circle, minimum size=4pt, inner sep=0, fill, black] at (0,0) (L1) {};
	\node[circle, minimum size=4pt, inner sep=0, fill, black] at (0,1) (L2) {};
	\node[circle, minimum size=4pt, inner sep=0, fill, black] at (0,2) (L3) {};
	\node[circle, minimum size=4pt, inner sep=0, fill, black] at (1,1) (M) {};
	\node[circle, minimum size=4pt, inner sep=0, fill, black] at (1,1) (M) {};
	\node[circle, minimum size=4pt, inner sep=0, fill, black] at (2,1) (N) {};
	\draw[function] (L1) to[bend right] (M);
	\draw[function] (L2) to (M);
	\draw[function] (L3) to[bend left] (M);
	\draw[function] (N) -- (M);
	\pgfresetboundingbox
	\useasboundingbox (L1.north) rectangle (L3.south-|N);
\end{tikzpicture}
\]
\end{example}

\begin{example}\label[example]{ex.cospanc_scFrob}
\cref{ex.cospan_scFrob} generalizes to any object in any category with finite
colimits.  Indeed, let $\cat{C}$ be a category with finite colimits. Write
$\Cat{Cospan}(\cat{C})$ for symmetric monoidal category with the objects of
$\cat{C}$ as its objects, isomorphism classes of cospans in $\cat{C}$ as its
morphisms, and coproduct $+$ as its monoidal product. Then each object $X$ of
$\Cat{Cospan}(\cat{C})$ has a canonical Frobenius structure, with Frobenius maps
exactly those cospans built from coproducts and copairings of $\id_X$. In
particular, each object $(m,x)$ of $\cospan$ has a canonical Frobenius
structure, which can be thought of as $m$ parallel copies of the Frobenius
structure on $1 \in \cospan[]$.
\end{example}

Next in \cref{ex.linrel_scfs_copy,ex.linrel_scfs_add} we give two different Frobenius structures on the same object. Let $(\linrel,\oplus)$ denote the symmetric monoidal category of finite-dimensional real
vector spaces $V$ and linear relations between them, with direct sum as the monoidal
product. Recall that a linear relation between 
$V$ and $W$---i.e.\ a morphism in $\linrel$---is a linear subspace $R\ss V\oplus
W$ of their direct sum. The composite of $R\ss V\oplus W$ and $S\ss W\oplus X$
is the relation
\begin{equation}\label{eqn.composite_relation}
  R\cp S\coloneqq\big\{(v,x)\in V\oplus X\mid\exists(w\in W)\ldotp (v,w)\in R\text{ and }(w,x)\in S.\big\}
\end{equation}
The identity morphism on $V$ is represented by the bare reflexive relation $\{(v,v')\mid v=v'\}$.

\begin{example}\label[example]{ex.linrel_scfs_copy}
We now define a Frobenius structure on the object $\rr\in\linrel$. Consider the relation $E\ss \rr\oplus\rr\oplus\rr$ given by $(a,b,c)\in E$ iff $a=b=c$. This is a linear relation because it is closed under addition and scalar multiplication; hence we can take $E$ to represent $\mu\colon(\rr\oplus\rr)\to\rr$. We can also take $E$ to represent $\delta\colon\rr\to(\rr\oplus\rr)$. We can take the maximal relation $\rr\ss\rr$ to represent $\eta\colon\rr^0\to\rr$ and $\epsilon\colon\rr\to\rr^0$.

It is easy to check that the nine equations required by \cref{def.scfm} are satisfied. For example, unitality says that the composite of $\{(a,(b,c))\mid c=a\}$ and $\{((b,c),d)\mid b=c=d\}$ should be the identity map $\{(a,d)\mid a=d\}$, and one checks that it is by working through \cref{eqn.composite_relation}.
\end{example}

\begin{example}\label[example]{ex.linrel_scfs_add}
Here we define a different Frobenius structure on the object $\rr\in\linrel$. Let $\mu$ be represented by the relation $\{(a,b,c)\mid a+b=c\}\ss\rr^3$, let $\eta$ be represented by the relation $\{0\}\ss\rr$. Similarly, let $\delta$ be represented by the relation $\{(a,b,c)\mid a=b+c\}$ and $\epsilon$ be represented by the relation $\{0\}\ss\rr$.

Again, it is easy to check that the equations required by \cref{def.scfm} are
satisfied. For example, the Frobenius law requires that for any
$a_1,a_2,b_1,b_2\in\rr$, the equation $a_1+a_2=b_1+b_2$ holds iff there exists
an $x\in\rr$ such that the equations $a_1=b_1+x$ and $x+a_2=b_2$ hold; this is
easily checked.
\end{example}

\commentout{
We note that Frobenius structures extend both along isomorphisms within a
category and along equivalences between symmetric monoidal categories.

\begin{proposition} \label{prop.frob_across_isos}
Let $X$ and $Y$ be isomorphic objects in a symmetric monoidal category. There
exists a bijection between Frobenius structures on $X$ and Frobenius structures
on $Y$.
\end{proposition}
\begin{proof}
Let $f\colon X \to Y$ be an isomorphism. Given a Frobenius structure
$(\mu,\eta,\delta,\epsilon)$ on $X$ we can construct a Frobenius structure
$(\mu_f,\eta_f,\delta_f,\epsilon_f)$ on $Y$ by conjugating with $f$. For example,
we define $\mu_f\colon Y\otimes Y\To{f^{-1}\otimes f^{-1}} X \otimes X
\To{\mu} X\To{f}Y$. Similarly, conjugating by $f^{-1}$ gives
an inverse function from Frobenius structures on $Y$ to those on $X$.
\end{proof}

\begin{proposition} \label{prop.frob_across_equivalences}
Let $(F,\varphi) \colon \cat{C} \to \cat{D}$ be an equivalence of symmetric
monoidal categories. There is a bijection between Frobenius structures on $X$
and Frobenius structures on $FX$.
\end{proposition}
\begin{proof}
Given a Frobenius structure $(\mu,\eta,\delta,\epsilon)$ on $X$, we can compose
the images of these Frobenius generators with the coherence map $\varphi$ and
its inverse to construct a corresponding Frobenius generator for $FX$. For
example, we can define
\[\mu_F\colon FX \otimes FX \To{\varphi_{X,X}}
F(X \otimes X) \To{F\mu} FX\quad\text{and}\quad\delta_F\colon FX\To{F\delta}F(X\otimes X)\To{\varphi^{-1}_{X,X}} FX\otimes FX.\]

Conversely, given a Frobenius structure on $FX$, we
may define a Frobenius structure on $X$. First some notation: write $(G,\gamma)\colon \cat{D} \to \cat{C}$ for the weak inverse to $F$, and
$\alpha\colon \id_\cat{C} \to G\circ F$ for the relevant natural isomorphism
witnessing (one half of) the equivalence. The Frobenius structure on $X$ is then given by composing the images of the Frobenius
generators with $\gamma$ and conjugating by $\alpha$. For example, given a
multiplication map $\mu'_{FX}\colon FX \otimes FX \to FX$, we may define 
\[
  X \otimes X
  	\To{\alpha_{X} \otimes \alpha_{X}}
  GFX \otimes GFX 
 		\To{\gamma_{FX,FX}\ldotp G(\varphi_X)} 
	GF(X\otimes X)
		\To{GF(\mu_X)}
  GFX
  	\To{\alpha_X^{-1}} 
  X
\]
to be a multiplication map on $X$.

These constructions are mutually inverse. For example, starting with $\mu$ the round-trip again produces $\mu$ because the following diagram commutes:
\begin{equation}\label{eqn.routine}
\begin{tikzcd}
X \otimes X \ar[r, "\alpha\otimes\alpha"] \ar[d, "\mu"'] \ar[drr, "\alpha" description]
& GFX \otimes GFX \ar[r, "\gamma"]
& G(FX \otimes FX) \ar[d, "G(\varphi)"] \\
X 
& GFX \ar[l, "\alpha^{-1}"] 
& GF(X \otimes X) \ar[l, "GF(\mu)"]
\end{tikzcd}
\end{equation}
Indeed, the upper triangle commutes because $\alpha$ is monoidal, and the lower
one because $\alpha$ is natural.
\end{proof}
Subsequently we will consider the commutativity proofs
for diagrams such as \cref{eqn.routine} to be routine, and omit such details.
}

\section{Hypergraph categories} \label{sec.hypergraph_cats}
In a hypergraph category, every object has a chosen Frobenius structure, chosen compatibly with the monoidal structure.

\begin{definition}\label[definition]{def.hypergraph_cat_func}
  A \define{hypergraph category} is a symmetric monoidal category $(\cat{H},\otimes,I)$ in which each
  object $X$ is equipped with a Frobenius structure
  $(\mu_X,\eta_X,\delta_X,\epsilon_X)$, satisfying
\begin{equation}\label{eqn.hypergraph_def}
  \begin{aligned}
    \begin{tikzpicture}[spider diagram, font=\small, baseline=(bl),scale=.9]
	\node[spider={2}{1}] (a) {};
	\node[left=0 of a_in1] {$X\otimes Y$};
	\node[left=0 of a_in2] {$X\otimes Y$};
	\node[right=0 of a_out1] (llab) {$X\otimes Y$};
	\node[coordinate, right=1.5 of llab] (helper) {};
	\node[spider={2}{1}, above right=0.2 and 5 of a_in1] (b) {};
	\node[spider={2}{1}, below right=0.2 and 5 of a_in2] (c) {};
	\coordinate (x1) at (b_in1-|helper);
	\coordinate (x2) at (b_in2-|helper);
	\coordinate (x3) at (c_in1-|helper);
	\coordinate (x4) at (c_in2-|helper);
	\draw (b_in1) -- (x1);
	\draw (x3) to (b_in2);
	\draw  (x2) to (c_in1);
	\draw (c_in2) -- (x4);
	\node[left=0 of x1] {$X$};
	\node[left=0 of x2] {$Y$};
	\node[left=0 of x3] {$X$};
	\node[left=0 of x4] {$Y$};
	\node[right=0 of b_out1] {$X$};
	\node[right=0 of c_out1] {$Y$};
	\node at ($(llab)!.5!(helper)$) {$=$};
	\node[spider={0}{1}, right=5 of helper] (d) {};
	\node[right=0 of d_out1] (rlab) {$X\otimes Y$};
	\node[coordinate, right=1 of rlab] (help2) {};
	\node[spider={0}{1}, above=0.2 of help2] (e) {};
	\node[spider={0}{1}, below=0.2 of help2] (f) {};
	\node[right=0 of e_out1] {$X$};
	\node[right=0 of f_out1] {$Y$};
	\node at ($(rlab.east)!.35!(help2)$) {$=$};
	\node[spider={1}{2}, below=2.5 of a] (aa) {};
	\node[left=0 of aa_in1] {$X\otimes Y$};
	\node[right=0 of aa_out1] (h1) {$X\otimes Y$};
	\node[right=0 of aa_out2] (h2) {$X\otimes Y$};
	\coordinate (llab) at ($(h1)!.5!(h2)$) {};
	\node[spider={1}{2}, above right=0.2 and 3.5 of aa_out1] (b) {};
	\node[spider={1}{2}, below right=0.2 and 3.5 of aa_out2] (c) {};
	\coordinate (helper) at ($(b_out2)+(.5,-.25)$) {};
	\coordinate (x1) at (b_out1-|helper);
	\coordinate (x2) at (b_out2-|helper);
	\coordinate (x3) at (c_out1-|helper);
	\coordinate (x4) at (c_out2-|helper);
	\draw (b_out1) -- (x1);
	\draw (b_out2) to (x3);
	\draw (c_out1) to (x2);
	\draw (c_out2) -- (x4);
	\node[right=0 of x1] {$X$};
	\node[right=0 of x2] {$Y$};
	\node[right=0 of x3] {$X$};
	\node[right=0 of x4] {$Y$};
	\node[left=0 of b_in1] {$X$};
	\node[left=0 of c_in1] {$Y$};
	\coordinate (help2) at ($(b_in1)!.5!(c_in1)$);
	\node at ($(llab)!.5!(help2)$) {$=$};
	\node[spider={1}{0}, right=5 of helper] (d) {};
	\node[left=0 of d_in1] {$X\otimes Y$};
	\node[coordinate, right=1 of d] (help3) {};
	\node[spider={1}{0}, above right=0.25 and 1 of help3] (e) {};
	\node[spider={1}{0}, below right=0.25 and 1 of help3] (f) {};
	\node[left=0 of e_in1] (x) {$X$};
	\node[left=0 of f_in1] (y) {$Y$};
	\coordinate (rlab) at ($(x)!.5!(y)$);
	\node at ($(rlab)!.5!(d)$) {$=$};	
	\coordinate (bl) at ($(a)!.5!(aa)$);
\end{tikzpicture}
\end{aligned}
\end{equation}
as well as the \emph{unit coherence axiom}, that $\eta_I=\id_I=\epsilon_I$.

A functor $(F,\varphi)$ of hypergraph categories, or \define{hypergraph
functor}, is a strong symmetric monoidal functor $(F,\varphi)$ that preserves
the hypergraph structure. More precisely, the latter condition means that if the Frobenius structure on $X$ is $(\mu_X,\eta_X,\delta_X,\epsilon_X)$ then that on $FX$ must be 
\begin{equation}\label{eqn.hyp_functor}
  \left(\varphi_{X,X}\cp F\mu_X,\enspace
  \varphi_I \cp F\eta_X, \enspace
  F\delta_X\cp\varphi^{-1}_{X,X},\enspace 
  F\epsilon_X\cp \varphi_I^{-1}\right).
\end{equation}
We write $\hyp$ for the category with hypergraph categories as objects and
hypergraph functors as morphisms, and $\hhyp$ for the 2-category with, in
addition to these objects and morphisms, monoidal natural transformations as
2-morphisms.
\end{definition}

\begin{remark}
Note that every natural transformation between hypergraph functors is
invertible, i.e.\ a natural isomorphism. This follows from
\cref{prop.compactclosed} and the fact that every natural transformation between
compact closed categories is invertible.
\end{remark}

\begin{example}\label[example]{ex.cospanc}
Following \cref{ex.cospanc_scFrob}, the category of $\Cat{Cospan}(\cat{C})$ of
cospans in any category with finite colimits is canonically a hypergraph
category. Note in particular that $\cospan[\Lambda] =
\Cat{Cospan}(\finset_\Lambda)$ is a hypergraph category.
\end{example}

\begin{example}
  If we let $\cat{C}$ be the opposite of a category with finite limits in
  \cref{ex.cospanc}, we see that the category of $\Cat{Span}(\cat{C})$ of
  \emph{spans} in any category with finite limits is canonically a hypergraph
  category. In fact, if $\cat{C}$ is regular, then the category of relations in
  $\cat{C}$ is also a hypergraph category. In particular, the category $\rel$ of
  sets and binary relations can be equipped with the structure of a hypergraph
  category \cite{Fong:2016a}. 
\end{example}

\begin{example}
  The category of finite-dimensional vector spaces where each object each has a
  chosen basis is a hypergraph category \cite{kissinger2015finite}.
\end{example}

\begin{remark} 
  Note that the condition that the Frobenius structure on the monoidal unit be
  the structure $(\rho_I^{-1}, \id_I,\rho_I,\id_I)$ of \cref{ex.scFrob_on_I} has
  been omitted from some, but not all, previous definitions. We shall see in
  \cref{thm.stricthypergraphs} that the unit coherence axiom is crucial for the
  strictification of hypergraph categories, and hence for the equivalence with
  cospan-algebras.

  One reason that this additional unit coherence axiom may have been overlooked
  is that, in the strict case, this additional axiom does not alter the
  definition; we will prove this in \cref{prop.unique_Frob_on_I}. In
  \cref{ex.necessity_of_new_axiom} we will give an example which shows that the
  unit coherence axiom does not follow from the old ones; it really is a new
  addition.
\end{remark}

\begin{proposition}\label{prop.unique_Frob_on_I}
  Suppose $\cat{H}$ is a strict symmetric monoidal category in which each object
  is equipped with a Frobenius structure such that \cref{eqn.hypergraph_def} is
  satisfied. Then $\cat{H}$ is a hypergraph category.
\end{proposition}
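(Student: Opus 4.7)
The plan is to show that in a strict setting, the unit coherence axiom $\eta_I=\id_I=\epsilon_I$ is already forced by the Frobenius axioms together with the compatibility condition \eqref{eqn.hypergraph_def}. The key observation is that in a strict monoidal category one has $I\otimes I=I$ on the nose, so the Frobenius structure on $I$ lives entirely inside the commutative scalar monoid $M\coloneqq\cat{H}(I,I)$.

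First, I would invoke the analysis of \cref{ex.frob_on_strict_I}: since $I\otimes I=I$, the morphisms $\mu_I,\eta_I,\delta_I,\epsilon_I\colon I\to I$ are all elements of $M$, and the Frobenius/special/(co)unitality axioms reduce to three equations in $M$, namely $\mu_I\ast\eta_I=e$, $\delta_I\ast\mu_I=e$, and $\delta_I\ast\epsilon_I=e$. In particular, all four scalars are invertible elements of $M$.

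Next, I would specialize the compatibility axioms in \eqref{eqn.hypergraph_def} to $X=Y=I$. Strictness collapses $I\otimes I$ to $I$ and collapses $\otimes$ on scalars to $\ast$ (by Eckmann--Hilton, as in \cref{ex.frob_on_strict_I}), so the equation $\eta_{X\otimes Y}=\eta_X\otimes\eta_Y$ becomes
\[
  \eta_I=\eta_I\ast\eta_I,
\]
exhibiting $\eta_I$ as an idempotent in $M$. Since $\eta_I$ is invertible, it equals the unit $e=\id_I$. The analogous argument, using $\epsilon_{X\otimes Y}=\epsilon_X\otimes\epsilon_Y$, yields $\epsilon_I=\id_I$. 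This delivers the unit coherence axiom, so $\cat{H}$ is a hypergraph category in the sense of \cref{def.hypergraph_cat_func}.

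There is no serious obstacle here beyond the bookkeeping: the only thing to be careful about is the justification that tensor and composition coincide on scalars (which requires the strictness/Eckmann--Hilton input) and the trivial monoid-theoretic fact that an invertible idempotent is the identity.
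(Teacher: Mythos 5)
Your proof is correct and follows essentially the same route as the paper's: both reduce the question to the scalar monoid $\cat{H}(I,I)$ via the analysis of \cref{ex.frob_on_strict_I} and conclude by noting that an invertible idempotent must be the identity. The only (immaterial) difference is which part of \cref{eqn.hypergraph_def} is specialized at $X=Y=I$: the paper uses the $\mu$-compatibility to force $\mu_I=\id_I$ and then recovers the rest of the structure from the relations $\mu*\eta=e$, $\mu*\delta=e$, $\delta*\epsilon=e$, while you use the $\eta$- and $\epsilon$-compatibilities to obtain the unit coherence axiom directly.
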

\begin{proof}
  We must show that the Frobenius structure on the monoidal unit is
  $(\rho_I^{-1},\id_I,\rho_I,\id_I)$.

  First, note that in any strict monoidal category, we have $I=I \otimes I$ and
  $\rho_I =\id_I$, so the Frobenius structure constructed from the unitors on
  $I$, as detailed in \cref{ex.scFrob_on_I}, is simply equal to
  $(\id_I,\id_I,\id_I,\id_I)$. This is the unique Frobenius structure on $I$
  obeying the equations of \cref{eqn.hypergraph_def}. To see this, recall that
  by \cref{ex.frob_on_strict_I}, a Frobenius structure on $I$ simply amounts to
  a choice of invertible map $\mu\colon I \to I$.  The first equation of
  \cref{eqn.hypergraph_def} requires further that $\mu=\mu*\mu$. But the only
  monoid element that is both idempotent and invertible is the identity,
  $\id_I=\mu*\mu^{-1}=\mu*\mu*\mu^{-1}=\mu$. 
\end{proof}

\begin{example}\label{ex.necessity_of_new_axiom}
  To show that the unit coherence axiom indeed alters the definition of
  hypergraph category in general, here we provide a example of a (necessarily
  non-strict) symmetric monoidal category $\cat{X}$, equipped with a Frobenius
  structure on each object, that fails only this additional axiom.

  Let $(\cat{X},\oplus, I)$ be the symmetric monoidal category with two objects,
  $I$ and $O$, such that every homset is equal to $\{0,1\}$, such that $I\oplus
  I= O \oplus O= O$ and $I \oplus O=O \oplus I = I$, and such that composition
  and monoidal product of morphisms are all given by addition modulo 2. Note
  that the identity maps on $I$ and $O$ are both $0$. The coherence maps for the
  monoidal product are also given by the maps $0$; from this naturality and all
  coherence conditions are immediate.

  We may choose Frobenius structures $(1,1,1,1)$ on $I$ and $(0,0,0,0)$ on $O$.
  These structures obey the equations in \cref{eqn.hypergraph_def}, but do not
  obey the condition that the Frobenius structure on $I$ is
  $(\rho^{-1}_I,\id_I,\rho_I,\id_I)=(0,0,0,0)$.
\end{example}

\section{Critiques of hypergraph categories as structured categories}
\label{sec.critiques}

In this brief subsection we sketch two examples to promote the idea that hypergraph
categories should not be thought of as structured categories. First we show in
\cref{ex.hyp_doesnt_extend_equiv} that hypergraph structures do not extend along
equivalences of categories. Second we show in \cref{ex.hyp_ff_ess_not_equiv}
that an essentially surjective and fully faithful hypergraph functor may
fail to be a hypergraph equivalence. These critiques motivate the upcoming cospan-algebra perspective.

\begin{example}\label[example]{ex.hyp_doesnt_extend_equiv}
Here we will produce a category $\linrel_2$, an equivalence of categories
$F\colon\linrel_2\to\linrel$, and a hypergraph structure on $\linrel_2$ for
which there is no extension along $F$, i.e.\ there is no hypergraph structure on
$\linrel$ under which $F$ is a hypergraph functor. The idea is to let
$\linrel_2$ house two copies of $\linrel$ and to cause a problem by equipping
them with the two different Frobenius structures from
\cref{ex.linrel_scfs_copy,ex.linrel_scfs_add}.

Let $\linrel_2$ be the hypergraph category with two isomorphic copies of every
object in $\linrel$, but the same maps.%
\footnote{
The definition of $\linrel_2$ can be made more precise, once we have defined the
$(\io,\ff)$-factorization of hypergraph categories (\textsection\ref{sec.io_ff}). Namely,
consider the unique hypergraph functor $\cospan[\{1,2\}]\to\linrel$ sending
$1,2\mapsto\rr$, let
$\cospan[\{1,2\}]\To{\io}\cat{H}\To{\ff}\linrel$ be its $(\io, \ff)$-factorization, and let
$\linrel_2\coloneqq\cat{H}$.
}
By definition there is a functor $F\colon\linrel_2\to\linrel$, which is both
fully faithful and essentially surjective, so it is an equivalence. As we shall
see in detail in \cref{lemma.gen_hyp_struc}, we can put a hypergraph structure
on $\linrel_2$ by declaring a Frobenius structure on the two copies of $\rr$; we
use the two different such structures from
\cref{ex.linrel_scfs_copy,ex.linrel_scfs_add}. Then no Frobenius structure on
$\rr\in\linrel$ will satisfy \cref{eqn.hyp_functor} in
\cref{def.hypergraph_cat_func}.

\end{example}

\begin{example}\label[example]{ex.hyp_ff_ess_not_equiv}
To show that a fully faithful, essentially surjective hypergraph functor need
not be a hypergraph equivalence, we simply run \cref{ex.hyp_doesnt_extend_equiv}
the other way. Namely, let $\linrel_2$ be the hypergraph category constructed in
\cref{ex.hyp_doesnt_extend_equiv}, give $\linrel$ the additive hypergraph
structure from \cref{ex.linrel_scfs_add}, and consider the hypergraph functor $\linrel\to\linrel_2$ sending
the generator to the appropriate generator. This is essentially surjective and
fully faithful, but it is not an equivalence of hypergraph categories because,
as we saw in \cref{ex.hyp_doesnt_extend_equiv}, there is no hypergraph functor
to serve as its inverse.
\end{example}

The critique leveled by
\cref{ex.hyp_doesnt_extend_equiv,ex.hyp_ff_ess_not_equiv} is important, because
it says that in an important sense hypergraph categories do not behave like
structured categories. This critique dissolves---i.e.\ the above problems become
impossible to state---when we treat hypergraph categories as cospan-algebras.

Thus thinking of hypergraph categories as cospan-algebras has distinct
advantages. However, it also comes with a couple of costs. The first is that
cospan-algebras do not take into account 2-morphisms, i.e.\ the natural
transformations between hypergraph functors; the question of whether and/or how
this can be rectified, and indeed if it needs to rectified, remains open. The
second cost is that cospan-algebras correspond to hypergraph categories that are
objectwise-free ($\of$). Luckily, this second issue is not very important: in
\cref{sec.strictification}, we will show that  every hypergraph category is
naturally equivalent to one that is $\of$.

\section{A word on operads}\label{sec.word_on_operads}

In the introduction, we spoke of operads. Operads are generalizations of categories in which each morphism has a finite number of inputs and one output, e.g.\ $\varphi\colon x_1,\ldots,x_n\to y$. In the context of this paper, operads govern the structure of wiring diagrams like the ones in \cref{eqn.subst}, and one should imagine the $x$'s as the interior cells or boxes and the $y$ as the exterior cell or box of a wiring diagram $\varphi$.

The reader who is unfamiliar with operads need not worry: the only operads we use are those that underlie symmetric monoidal categories $\cat{M}$, where operad morphisms $\varphi$ as above come from morphisms $\varphi\colon(x_1\otimes\cdots\otimes x_n)\to y$ in $\cat{M}$.

In fact, throughout this paper we work exclusively in the monoidal setting, so
operads will disappear from the discussion. There are two reasons we bring up operads at all. First, they are a bit more general, so further work in this area will sometimes require one to use operads rather than monoidal categories. More relevant, however, is the fact that the wiring diagram pictures we want to draw more naturally fit with operads. For example, here we draw the ``same morphism'' in two ways: operadic style $\phi\colon f,g,h\to i$ on the left and monoidal style $\phi\colon f\otimes g\otimes h\to i$ on the right:
\[
\begin{tikzpicture}[unoriented WD, pack size=25pt, pack inside color=white, pack outside color=black, link size=2pt, font=\footnotesize, spacing=30pt, baseline=(Hh.north)]
	\node[pack] at (0,0) (Hf) {$f$};
	\node[pack, right=.5 of Hf] (Hg) {$g$};
	\node[pack] at ($(Hf)!.5!(Hg)+(0,-.8)$) (Hh) {$h$};
	\node[outer pack, inner xsep=3pt, inner ysep=3pt, fit=(Hf) (Hg) (Hh)] (H) {};
	\node[link] at ($(Hg.30)!.5!(H.30)$) (link1) {};
	\node[link,  left=.1 of Hh.west] (link2) {};
	\draw (Hg) -- (link1);
	\node[below=0 of H.north] {$i$};
\begin{scope}[font=\tiny]
	\draw[shorten >= -2pt] (Hf) to node[pos=-.3] {1} node[pos=1.3] {1} (H);
	\draw[shorten >= -2pt] (Hh) to node[pos=-.3] {3} node[pos=1.3] {4} (H);
	\draw (Hf) to node[pos=-1.2] {3} node[pos=2.2] {1} (Hh);
	\draw (Hf) to node[pos=-.3] {2} node[pos=1.3] {2} (Hg);
	\draw (Hg) to node[pos=-1] {3} node[pos=2] {2} (Hh);
	\draw (Hg) to node[pos=-.8] {1} (link1);
	\draw (link2) to node[pos=2.4] {4} (Hh);
	\draw[shorten >= -2pt] (link1) to[bend right] node [pos=1.4] {3} (H.20);
	\draw[shorten >= -2pt] (link1) to[bend left] node [pos=1.4] {2} (H.45);
\end{scope}
	\node[pack, right=3 of H.east, inner sep=7pt] (fgh) {$f\otimes g\otimes h$};
	\foreach \i [evaluate=\i as \ii using {\i*36}] in {0,...,9} {
		\draw (fgh.\ii) to 
			node[coordinate, pos=-2] (N\i) {}
			node[coordinate, pos=2] (o\i) {}
			+(\ii:3pt);		
	}
\begin{scope}[font=\tiny]
	\node at (N0) {0};
	\node at (N1) {1};
	\node at (N2) {4};
	\node at (N3) {2};
	\node at (N4) {6};
	\node at (N5) {3};
	\node at (N6) {5};
	\node at (N7) {7};
	\node at (N8) {8};
	\node at (N9) {9};
\end{scope}
	\node[link] at ($(o5)+(180:5pt)$) (l1) {};
	\node[link] at ($(o9)+(-36:5pt)$) (l2) {};
	\node[outer pack, fit=(fgh)] (outer) {};
	\node[below=0 of outer.north] {$i$};
	\draw (o0) to (outer.0);
	\draw (o5) to (l1);
	\draw (l1) to (outer.172);
	\draw (l1) to (outer.188);
	\draw (o8) to (outer.-72);
	\draw (o9) to (l2);
	\draw (o1) to[out=36*1, in=36*2, looseness=1.5] (o2);
	\draw (o3) to[out=36*3, in=36*4, looseness=1.5] (o4);
	\draw (o6) to[out=36*6, in=36*7, looseness=1.5] (o7);
	\pgfresetboundingbox
	\useasboundingbox (H.west|-outer.north) rectangle (outer.east|-outer.south);
\end{tikzpicture}
\]
Although monoidal-style wiring diagrams are often more difficult to visually
parse than operad-style, the symbolic notation for monoidal categories is often
easier to parse than that of operads. Thus the only place operads will appear
from now on is in visualizing wiring diagrams.

\chapter{Properties of hypergraph categories} \label{sec.properties}

In this section, we discuss some basic properties of hypergraph categories. In \cref{sec.sdccc}, we show that they are self-dual compact closed. In \cref{sec.cospan_is_free}, we show that $\cospan$ is both the free hypergraph category and the free $\of$-hypergraph category on a set $\Lambda$. In \cref{sec.io_ff} we show that any hypergraph functor can be factored as an identity-on-objects ($\io$) hypergraph functor followed by a fully faithful ($\ff$) hypergraph functor, and use this factorization to construct a Grothendieck fibration $U\colon\hyp_\of\to\smset_\List$. Finally in \cref{sec.strictification} we prove that every hypergraph category can be strictified to an equivalent hypergraph category that is objectwise-free.

\section{Hypergraph categories are self-dual compact closed.}\label{sec.sdccc}

A compact closed category is a symmetric monoidal category $(\cat{C},\otimes,I)$
such that every object $X$ is dualizable---i.e.\ there exists an object $X^*$ and
morphisms $\cupp_X\colon I\to X\otimes X^*$, depicted
$\begin{tikzpicture}[oriented WD, bbx=.8em, bby=.8em]
	\coordinate (top);
	\coordinate [below=1 of top] (bot);
  	\draw (top) to[in=180,out=180] (bot);
	\draw (top) to ($(top)+(.7,0)$);
	\draw (bot) to ($(bot)+(.7,0)$);
\end{tikzpicture}\,
$,
and $\capp_X\colon X^*\otimes X\to I$, depicted
$\begin{tikzpicture}[oriented WD, bbx=.8em, bby=.8em]
	\coordinate (top);
	\coordinate [below=1 of top] (bot);
  	\draw (top) to[in=0,out=0] (bot);
	\draw (top) to ($(top)-(.7,0)$);
	\draw (bot) to ($(bot)-(.7,0)$);
\end{tikzpicture}\,
$,
which satisfy the zigzag identities\footnote{These are often also called the
triangle identities, as one can think of $X^*$ as a left adjoint to $X$ (see
\cite{day1977note}), or the snake identities (see \cref{eqn.triangle_ids}).}:
\begin{equation}\label{eqn.triangle_ids}
\begin{aligned}
\begin{tikzpicture}[oriented WD, bbx=1em, bby=1em]
	\coordinate (topleft);
	\coordinate [below right = 2 and 3 of topleft] (botright);
	\coordinate [right=.5 of topleft] (col1);
	\coordinate [left=.5 of botright] (col2);
	\coordinate (midline) at ($(topleft)!.5!(botright)$);
	\coordinate (captop) at (topleft-|col2);
	\coordinate (capbot) at (midline-|col2);
	\coordinate (cuptop) at (midline-|col1);
	\coordinate (cupbot) at (botright-|col1);
  	\draw (topleft) to (captop)
			to[in=0,out=0] (capbot);
	\draw (cuptop) to (capbot);
	\draw (cuptop)	to[in=180,out=180] (cupbot)
			to (botright);
	\pgfresetboundingbox
	\useasboundingbox (topleft) rectangle (botright);
\end{tikzpicture}
\end{aligned}
=
\begin{aligned}
\begin{tikzpicture}[oriented WD, bbx=1em, bby=1em]
	\coordinate (left);
	\coordinate [right =3 of left] (right);
	\coordinate [above =.5 of left] (topleft);
	\coordinate [below =.5 of right] (botright);
  	\draw (left) to (right);
	\pgfresetboundingbox
	\useasboundingbox (topleft) rectangle (botright);
\end{tikzpicture}
\end{aligned}
\hspace{3cm}
\begin{aligned}
\begin{tikzpicture}[oriented WD, bbx=1em, bby=1em]
	\coordinate (botleft);
	\coordinate [above right = 2 and 3 of botleft] (topright);
	\coordinate [right=.5 of botleft] (col1);
	\coordinate [left=.5 of topright] (col2);
	\coordinate (midline) at ($(botleft)!.5!(topright)$);
	\coordinate (capbot) at (botleft-|col2);
	\coordinate (captop) at (midline-|col2);
	\coordinate (cupbot) at (midline-|col1);
	\coordinate (cuptop) at (topright-|col1);
  	\draw (botleft) to (capbot)
			to[in=0,out=0] (captop);
	\draw (cupbot) to (captop);
	\draw (cupbot)	to[in=180,out=180] (cuptop)
			to (topright);
	\pgfresetboundingbox
	\useasboundingbox (botleft) rectangle (topright);
\end{tikzpicture}
\end{aligned}
=
\begin{aligned}
\begin{tikzpicture}[oriented WD, bbx=1em, bby=1em]
	\coordinate (left);
	\coordinate [right =3 of left] (right);
	\coordinate [above =.5 of left] (topleft);
	\coordinate [below =.5 of right] (botright);
  	\draw (left) to (right);
	\pgfresetboundingbox
	\useasboundingbox (topleft) rectangle (botright);
\end{tikzpicture}
\end{aligned}
\end{equation}
This notion generalizes duals in finite-dimensional vector spaces. A compact closed category is called \emph{self-dual} if every object serves as its own dual, $X^*\coloneqq X$; the category of finite-dimensional based vector spaces (where each vector space is equipped with basis) is self-dual compact closed.

A basic property of hypergraph categories is that they are self-dual compact
closed. Indeed, a self-duality for each object can be constructed using the
Frobenius maps for each object. 

\begin{proposition} \label[proposition]{prop.compactclosed}
Every hypergraph category $\cat{H}$ is self-dual compact
closed. Moreover, each object $X$ is equipped with a canonical self-duality
defined by $\cupp_X\coloneqq\eta_X\cp\delta_X \colon I \to X \otimes X$ and
$\capp_X\coloneqq\mu_X \cp \epsilon_X \colon X \otimes X \to I$.
\end{proposition}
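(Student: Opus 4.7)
The plan is to verify the two zigzag identities directly from the axioms of a special commutative Frobenius monoid, namely unitality, counitality, and the Frobenius law. Since $\cupp_X$ and $\capp_X$ are built purely from the Frobenius generators on $X$, no manipulation of other objects is needed; the argument is entirely ``local'' to a single Frobenius structure, so compatibility with $\otimes$ as in \cref{eqn.hypergraph_def} plays no role here.

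First I would substitute the definitions into the first zigzag $(\cupp_X\otimes\id_X)\cp(\id_X\otimes\capp_X)$ and expand:
\[
(\eta_X\otimes\id_X)\cp(\delta_X\otimes\id_X)\cp(\id_X\otimes\mu_X)\cp(\id_X\otimes\epsilon_X).
\]
The key step is to apply the Frobenius law to the middle pair, replacing $(\delta_X\otimes\id_X)\cp(\id_X\otimes\mu_X)$ with $\mu_X\cp\delta_X$. The expression then becomes $(\eta_X\otimes\id_X)\cp\mu_X\cp\delta_X\cp(\id_X\otimes\epsilon_X)$. Now unitality gives $(\eta_X\otimes\id_X)\cp\mu_X=\id_X$ and counitality gives $\delta_X\cp(\id_X\otimes\epsilon_X)=\id_X$, so the composite collapses to $\id_X$.

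The second zigzag $(\id_X\otimes\cupp_X)\cp(\capp_X\otimes\id_X)$ is handled symmetrically: after expansion one applies the other half of the Frobenius law, $(\id_X\otimes\delta_X)\cp(\mu_X\otimes\id_X)=\mu_X\cp\delta_X$, and then uses unitality on the right and counitality on the left. (Alternatively, one could observe that commutativity and cocommutativity reduce the second zigzag to the first, but the direct computation is equally short.) This establishes that $X$ is its own dual with unit $\cupp_X$ and counit $\capp_X$, so $\cat{H}$ is self-dual compact closed.

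I do not expect a genuine obstacle; the only care needed is bookkeeping of unitors when $\cat{H}$ is not strict, since $\cupp_X\colon I\to X\otimes X$ silently uses $\rho_I$ or $\lambda_I$ to match domains of $\eta_X\colon I\to X$ and $\delta_X\colon X\to X\otimes X$. These unitor insertions are forced and cancel by coherence, so one may safely suppress them (or, if preferred, first pass through the strictification of \cref{thm.2_equiv} proved later, noting that self-dual compact closedness is preserved and reflected by monoidal equivalence and so can be checked in the strict case).
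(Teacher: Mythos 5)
Your proof is correct and is essentially the paper's argument: the paper likewise verifies the zigzag identities by applying the Frobenius law to the middle of the expanded composite and then collapsing with unitality and counitality (it presents this as a string-diagram computation and cites the result as well known). The only quibble is your aside that $\cupp_X$ ``silently uses $\rho_I$ or $\lambda_I$''---in fact $\eta_X\cp\delta_X$ type-checks directly, and the suppressed coherence maps live instead in the statement of the zigzag identities and the (co)unit axioms; this does not affect the argument.
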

\begin{proof}
This result is well known; see for example \cite{rosebrugh2005generic}. The
proof is straightforward: the zigzag identities \eqref{eqn.triangle_ids} are an
immediate consequence of the Frobenius and co/unitality axioms. For example:
\begin{align*}
\begin{aligned}
\begin{tikzpicture}[spider diagram]
	\node[spider={0}{1}] (a) {};
	\node[spider={1}{2}, right=.5 of a] (b) {};
	\node[spider={2}{1}, below right=\leglen*2/3 and 1 of b] (c) {};
	\node[spider={1}{0}, right=.5 of c] (d) {};
	\coordinate[left=2 of c_in2] (in);
	\coordinate[right=2 of b_out1] (out);
	\draw (in) -- (c_in2);
	\draw (a_out1) -- (b_in1);
	\draw (b_out1) -- (out);
	\draw (b_out2) -- (c_in1);
	\draw (c_out1) -- (d_in1);
\end{tikzpicture}
\end{aligned}
=
\begin{aligned}
\begin{tikzpicture}[spider diagram]
	\node[special spider={2}{1}{\leglen}{0}] (b) {};
	\node[spider={0}{1}, left=.5 of b_in1] (a) {};
	\node[special spider={1}{2}{0}{\leglen}, right=.5 of b] (c) {};
	\node[spider={1}{0}, right=.5 of c_out2] (d) {};
	\coordinate[left=1 of b_in2] (in);
	\coordinate[right=1 of c_out1] (out);
	\draw (a_out1) -- (b_in1);
	\draw (b_out1) -- (c_in1);
	\draw (in) -- (b_in2);
	\draw (c_out1) -- (out);
	\draw (c_out2) -- (d_in1);
\end{tikzpicture}
\end{aligned}
=
\begin{aligned}
\begin{tikzpicture}
	\coordinate (in);
	\coordinate[right =2 of in] (out);
	\node[above=.2 of in] (x) {};
	\node[below=.2 of in] (y) {};
	\draw (in) -- (out);
\end{tikzpicture}
\end{aligned}
\end{align*}
\end{proof}

This means that in any hypergraph category, we have a bijection between
morphisms $X \to Y$, and morphisms $I \to X\otimes Y$.
\begin{proposition}\label[proposition]{prop.gather_and_parse}
For any two objects $X, Y$ in a self-dual compact closed category $\cat{C}$,
there is a bijection $\cat{C}(X,Y)\cong\cat{C}(I,X\otimes Y)$.
\end{proposition}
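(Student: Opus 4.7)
The plan is to construct the bijection explicitly using the unit $\cupp_X \colon I \to X \otimes X$ and counit $\capp_X \colon X \otimes X \to I$, and to verify it is a bijection by invoking the zigzag identities \eqref{eqn.triangle_ids}. This is the standard ``name/coname'' correspondence for compact closed categories, specialised to the self-dual case.

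Concretely, I would define a map $\gathr{(-)} \colon \cat{C}(X,Y) \to \cat{C}(I, X \otimes Y)$ by sending $f \colon X \to Y$ to its \emph{name}
\[
  \gathr{f} \;\coloneqq\; \cupp_X \cp (\id_X \otimes f) \colon I \to X \otimes X \to X \otimes Y,
\]
and a map $\parse{(-)} \colon \cat{C}(I, X \otimes Y) \to \cat{C}(X,Y)$ by sending $g \colon I \to X \otimes Y$ to
\[
  \parse{g} \;\coloneqq\; (\id_X \otimes g) \cp (\capp_X \otimes \id_Y) \colon X \to X \otimes X \otimes Y \to Y,
\]
where in both cases I am suppressing the left/right unitors for readability.

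To show $\parse{\gathr{f}} = f$, I would expand the definition and use middle-four interchange to rewrite the composite as $\bigl((\id_X \otimes \cupp_X) \cp (\capp_X \otimes \id_X)\bigr) \cp f$; the first factor equals $\id_X$ by one of the zigzag identities, so the composite is $f$. Symmetrically, to show $\gathr{\parse{g}} = g$ I would expand and reorganise to produce a factor of the form $(\cupp_X \otimes \id_Y) \cp (\id_X \otimes \capp_X \otimes \id_Y) \cp (\id_X \otimes g)$, where the first two factors collapse to $\id_{X \otimes Y}$ by the other zigzag identity (after using naturality of the symmetry and unit isomorphisms to commute $g$ past the identities on $X$). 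These two calculations are essentially trivial in string-diagram form — the ``snake'' on either side straightens out.

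There is no real obstacle here; the only thing to be careful about is the bookkeeping of associators and unitors, which is routine by the coherence theorem for symmetric monoidal categories and so will be suppressed. Since the statement is a folklore consequence of compact closure and is already well-documented in the literature, I would simply cite a standard reference (e.g.\ \cite{kelly1980coherence}) in place of a fully explicit calculation.
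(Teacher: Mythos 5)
Your proposal is correct and matches the paper's proof: the paper defines exactly the same maps $\gathr{f}\coloneqq\cupp_X\cp(\id_X\otimes f)$ and $\parse{g}\coloneqq(\id_X\otimes g)\cp(\capp_X\otimes\id_Y)$ and then simply asserts that they are mutually inverse, which is the zigzag-identity calculation you sketch. The only caveat is cosmetic: your expression in the second verification elides some identity factors (it should be $\cupp_X\otimes\id_{X\otimes Y}$ up to unitors), but this is exactly the routine coherence bookkeeping you already flag as suppressed.
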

\begin{proof}
For any $f\colon X\to Y$, and any $g\colon I\to X\otimes Y$ define 
\[
\begin{tikzpicture}[oriented WD, bbx=1em, bby=1ex, bb min width=1.5em]
	\node[bb={1}{1}] (f) {$f$};
	\node[bb={0}{0},color=white, fit={(f.south west) ($(f.north east)+(.5,2.5)$)}] (fhat) {};
	\coordinate (fhat_out2) at (f_out1-|fhat.east);
	\coordinate (fhat_out1) at ($(fhat_out2)!.55!(fhat.north east)$);
	\begin{scope}[shorten >=-2pt]
  	\draw (f_out1) to (fhat_out2) node[right, font=\scriptsize] {$Y$};
  	\draw let \p1=(f_in1|-fhat_out1) in
  		(f_in1) to[in=180, out=180] (\x1,\y1) to (fhat_out1) node[right, font=\scriptsize] {$X$};
	\end{scope}
	\node[below=0.7 of f] {$\cupp_X\cp(\id_X\otimes f)$};
	\node[left=0 of fhat.west] {$\gathr{f}\coloneqq$};
	\node[bb port sep=1.2, bb={0}{2}, right=14 of f] (g) {$g$};
	\node[bb={0}{0},color=white, fit={(g.south west) ($(g.north east)+(1,2.5)$)}] (ghat) {};	
	\coordinate (ghat_out2) at (g_out2-|ghat.east);
	\coordinate (ghat_in2fake) at (ghat_out2-|ghat.west);
	\coordinate (ghat_in1) at ($(ghat_in2fake)!.55!(ghat.north west)$);	
	\begin{scope}[shorten >=-2pt]
  	\draw (g_out2) to (ghat_out2) node[right, font=\scriptsize] {$Y$} ;
  	\draw let \p1=(g_out1|-ghat_in1) in
		(g_out1) to[in=0, out=0]
		(\x1,\y1) to[in=180,out=180] (ghat_in1) node[left, font=\scriptsize] {$X$};
	\end{scope}
	\node[below=0.7 of g] {$(\id_X\otimes g)\cp(\capp_X\otimes\id_Y)$};
	\node[left=1 of ghat.west] {$\parse{g}\coloneqq$};
\end{tikzpicture}
\]
It is easy to prove that $\gathr{\cdot}$ and $\parse{\cdot}$ are mutually inverse. 
\end{proof}


We will refer to $\gathr{f}$ as the \emph{name} of $f$. This notion will be
critical for the equivalence between hypergraph categories and cospan-algebras:
given a hypergraph category $\cat{H}$, the corresponding cospan-algebra
$A_\cat{H}$ will record the \emph{names} of the morphisms $\cat{H}$, rather than
the morphisms themselves. But note that homsets $\cat{H}(X,Y)$ are indexed by
\emph{two} objects, $X$ and $Y$, while $A_\cat{H}(X)$ just depends on one. It is
the self-dual compact closed structure that allows us to switch between these
two viewpoints.


For any three objects $X,Y,Z$ in a self-dual compact closed category, we may
define a morphism $\comp^Y_{X,Z} \coloneqq \id_X \otimes \capp_Y \otimes \id_Z\colon
X \otimes Y \otimes Y \otimes Z \longrightarrow X \otimes Z$:
\begin{equation}
\comp^Y_{X,Z} \coloneqq 
\begin{aligned}
\begin{tikzpicture}[oriented WD, bbx=1em, bby=1em, font=\scriptsize]
	\coordinate (x1) at (0,3);
	\coordinate (x2) at (2,3);
	\coordinate (y1) at (0,2);
	\coordinate (y2) at (0,1);
	\coordinate (z1) at (0,0);
	\coordinate (z2) at (2,0);
	\coordinate (helper) at (1.2,1.5);
  	\draw (x1) to (x2);
  	\draw (y1) to ($(y1)+(.7,0)$) 
		   to[in=0, out=0] ($(y2)+(.7,0)$)
		   to (y2);
  	\draw (z1) to (z2);
	\node[left=0 of x1] {$X$};
	\node[left=0 of y1] {$Y$};
	\node[left=0 of y2] {$Y$};
	\node[left=0 of z1] {$Z$};
\end{tikzpicture}
\end{aligned}
\label{eq.comp}
\end{equation}
Below in \cref{prop.comp_works,prop.remember_the_name}, we show that the morphism $\comp$ acts like
composition on names and that $\comp$
can be used to recover a morphism from its name; both propositions are immediate from the zigzag identities.

\begin{proposition}\label{prop.comp_works}
For any morphisms $f\colon X \to Y$, $g \colon Y \to Z$ in a self-dual compact
closed category, we have $ (\gathr{f}\otimes \gathr{g}) \cp \comp^Y_{X,Z} =
\gathr{f\cp g}$:
\[
\begin{tikzpicture}[oriented WD, bbx=1em, bby=1ex, bb min width=1.5em]
	\node[bb={1}{1}] (f) {$f$};
	\node[bb={1}{1}, below=2.5 of f] (g) {$g$};
	\node[rounded corners=\bbcorners, thick, fit={(g.south west) ($(f.north east)+(.5,2.5)$)}] (fg1) {};
	\coordinate (fg1_out2) at (f_out1-|fg1.east);
	\coordinate (fg1_out1) at ($(fhat_out2)!.55!(fhat.north east)$);
	\coordinate (fg1_out4) at (g_out1-|fg1.east);
	\coordinate (fg1_out3) at ($(fg1_out4)!.5!(fg1_out2)$);
	\coordinate (helper) at (f_out1|-fg1_out3);
	\coordinate (topcup1) at (f_in1|-fg1_out1);
	\begin{scope}[shorten >=-2pt]
  	\draw (f_in1) to[in=180, out=180] (topcup1) to (fg1_out1);
  	\draw (g_out1) to (fg1_out4);
	\end{scope}
  	\draw let \p1=(g_in1|-fg1_out3) in
  		(g_in1) to[in=180, out=180] (\x1,\y1) -- (helper) to[in=0,out=0] (f_out1);
	\node[bb={1}{1}, right=5 of helper] (fg) {$f\cp g$};
	\node[bb={0}{0}, draw=none, fit={(fg.south west) ($(fg.north east)+(.5,2.5)$)}] (fghat) {};
	\coordinate (fghat_out2) at (fg_out1-|fghat.east);
	\coordinate (fghat_out1) at ($(fghat_out2)!.55!(fghat.north east)$);
	\begin{scope}[shorten >=-2pt]
  	\draw (fg_out1) to (fghat_out2);
  	\draw let \p1=(fg_in1|-fghat_out1) in
  		(fg_in1) to[in=180, out=180] (\x1,\y1) to (fghat_out1);
	\end{scope}
	\node at ($(fg1.east)!.5!(fghat.west)$) {$=$};
	\pgfresetboundingbox
	\useasboundingbox (topcup1) rectangle (fg1_out4-|fghat_out2);
\end{tikzpicture}
\]
\end{proposition}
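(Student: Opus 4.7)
The plan is to unfold both sides according to the definitions of $\gathr{\cdot}$ and $\comp$, and then apply the zigzag identity to a single $Y$-strand. Writing out the left-hand side, we have
\[
(\gathr f\otimes\gathr g)\cp\comp^Y_{X,Z}
=\bigl(\cupp_X\cp(\id_X\otimes f)\bigr)\otimes\bigl(\cupp_Y\cp(\id_Y\otimes g)\bigr)\cp(\id_X\otimes\capp_Y\otimes\id_Z).
\]
Using bifunctoriality of $\otimes$, I would first pull $g$ all the way to the right so that it is the last thing to happen, and group the remaining pieces to obtain
\[
(\cupp_X\otimes\cupp_Y)\cp(\id_X\otimes f\otimes\id_Y\otimes\id_Y)\cp(\id_X\otimes\capp_Y\otimes\id_Y)\cp(\id_X\otimes g).
\]

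The key step is to recognize, in the middle of this composite, the zigzag configuration for $Y$: the second factor of $\cupp_Y$ and the first factor of $\capp_Y$ sit side by side, forming precisely the snake
$(\cupp_Y\otimes\id_Y)\cp(\id_Y\otimes\capp_Y)=\id_Y$
from \cref{eqn.triangle_ids}. Again using bifunctoriality, I would slide $f$ past the $\cupp_Y$ (it acts on a tensor factor disjoint from the cup's outputs), so the subcomposite becomes $\id_X\otimes\bigl((\cupp_Y\otimes\id_Y)\cp(\id_Y\otimes\capp_Y)\bigr)\otimes\id_Y$ applied after $(\id_X\otimes f)$. The zigzag identity collapses the middle to $\id_Y$, leaving
\[
\cupp_X\cp(\id_X\otimes f)\cp(\id_X\otimes g)
=\cupp_X\cp(\id_X\otimes(f\cp g))
=\gathr{f\cp g},
\]
as required.

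The main obstacle is purely bookkeeping: making precise the repeated use of bifunctoriality to rearrange the tensor-interchange so that the zigzag appears in isolation. In a string-diagrammatic proof this is invisible (one just slides the box $f$ along its wire and straightens the snake), so I would present the argument as a single picture-level calculation, essentially mirroring the diagram already displayed in the statement, and leave the bifunctoriality reshuffling implicit as is standard for calculations in symmetric monoidal categories.
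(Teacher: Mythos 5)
Your proposal is correct and takes essentially the same route as the paper, which simply declares the proposition ``immediate from the zigzag identities'' and lets the string diagram carry the bookkeeping—exactly the unfold-definitions, isolate-the-snake, apply-the-zigzag argument you give. (Only a trivial typing slip: after $\cupp_X\cp(\id_X\otimes f)$ the snake sits on the single remaining $Y$-strand, so the subcomposite is $\id_X\otimes\bigl((\id_Y\otimes\cupp_Y)\cp(\capp_Y\otimes\id_Y)\bigr)$ with no extra $\otimes\,\id_Y$; since both forms of the zigzag identity hold, this does not affect the argument.)
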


\smallskip

\begin{proposition} \label[proposition]{prop.remember_the_name}
For any morphism $f\colon X\to Y$ in a self-dual compact closed category, we have $(\id_X\concat\gathr{f})\cp\comp^X_{\emptylist,Y}=f$:
\[
\begin{tikzpicture}[oriented WD, bbx=1em, bby=1ex, bb min width=1.5em]
	\node (f) {};
	\node[bb={1}{1}, below=2.5 of f] (g) {$f$};
	\node[bb={0}{0}, draw=none, fit={(g.south west) ($(f.north east)+(.5,2.5)$)}] (fg1) {};
	\coordinate (fg1_out1) at ($(fhat_out2)!.55!(fhat.north east)$);
	\coordinate (fg1_out2) at (f-|fg1.east);
	\coordinate (fg1_out4) at (g_out1-|fg1.east);
	\coordinate (fg1_out3) at ($(fg1_out4)!.55!(fg1_out2)$);
	\coordinate (helper) at (g.east|-fg1_out3);
	\coordinate (fg1_in1) at (f-|fg1.west);
	\begin{scope}[shorten >=-2pt]
  	\draw (g_out1) to (fg1_out4);
	\end{scope}
  	\draw let \p1=(g_in1|-fg1_out3) in
  		(g_in1) to[in=180, out=180] (\x1,\y1) -- (helper) to[in=0,out=0]
		(f-|g.east) --(fg1_in1);
	\node[bb={1}{1}, right=5 of helper] (fg) {$f$};
	\node[bb={0}{0}, color=white, fit={($(fg.south west)-(.5,2.5)$) ($(fg.north east)+(.5,2.5)$)}] (fghat) {};
	\coordinate (fghat_in1) at (fg_in1-|fghat.west);
	\coordinate (fghat_out1) at (fg_out1-|fghat.east);
	\draw (fghat_in1) to (fg_in1);
	\draw (fghat_out1) to (fg_out1);
	\node at ($(fg1.east)!.5!(fghat.west)$) {$=$};
	\pgfresetboundingbox
	\useasboundingbox (fg1_in1) rectangle (fg1_out4-|fghat_out1);
\end{tikzpicture}
\]
\end{proposition}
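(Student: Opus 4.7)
The plan is to expand the left-hand side using the definitions of $\gathr{f}$ and $\comp$, and then invoke the zigzag identity \eqref{eqn.triangle_ids}, paralleling the diagrammatic proof sketched for \cref{prop.comp_works}. Specifically, I would first unfold $\gathr{f} = \cupp_X \cp (\id_X \otimes f)$, taken from the proof of \cref{prop.gather_and_parse}, and $\comp^X_{\emptylist,Y} = \capp_X \otimes \id_Y$, from \eqref{eq.comp}. The left-hand side of the proposition then becomes
\[
  (\id_X \otimes \cupp_X) \cp (\id_X \otimes \id_X \otimes f) \cp (\capp_X \otimes \id_Y),
\]
up to the coherence isomorphism $X \otimes I \cong X$ which I will suppress in the usual way.

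Next I would use the interchange law for $\otimes$ twice. First, the last two factors combine via $(\id_{X \otimes X} \cp \capp_X)\otimes(f \cp \id_Y) = \capp_X \otimes f$, giving $(\id_X \otimes \cupp_X) \cp (\capp_X \otimes f)$. Applying interchange once more to pull $f$ out on the right yields
\[
  \bigl[(\id_X \otimes \cupp_X) \cp (\capp_X \otimes \id_X)\bigr] \cp f.
\]
The bracketed expression is exactly the left zigzag identity in \eqref{eqn.triangle_ids}, so it equals $\id_X$, and the whole composite reduces to $f$.

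Diagrammatically, this is just a snake-straightening: the name $\gathr{f}$ contributes a cup with $f$ dangling on the right leg, and composing with $\comp^X_{\emptylist,Y}$ caps the free $X$-wire onto the left leg of that cup; the resulting snake collapses by zigzag, leaving $f$ alone acting on the remaining $X$-wire. I expect no real obstacle: once the definitions are unpacked, the argument is entirely a routine manipulation of the compact closed structure, with the zigzag axiom doing all the substantive work. (Note that, as with \cref{prop.comp_works}, no hypergraph-specific axiom is used; the statement really belongs to arbitrary self-dual compact closed categories.)
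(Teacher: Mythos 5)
Your proof is correct and matches the paper's (very brief) argument: the paper simply asserts that \cref{prop.remember_the_name} is immediate from the zigzag identities and displays the snake-straightening diagram, and your expansion of $\gathr{f}$ and $\comp^X_{\emptylist,Y}$ followed by two applications of interchange is exactly that diagrammatic argument written out algebraically, with the bracketed composite $(\id_X\otimes\cupp_X)\cp(\capp_X\otimes\id_X)$ correctly identified as the relevant zigzag in \eqref{eqn.triangle_ids}. No gaps.
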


\smallskip

\begin{example} \label{ex.comp}
In $\cospan$ the morphism $\comp^Y_{X,Z}$ is given by the cospan
\begin{equation*}
\begin{aligned}
\begin{tikzpicture}[unoriented WD, pack size=25pt, pack inside color=white, pack
outside color=black, link size=2pt, font=\footnotesize, spacing=30pt]
	\node[pack] at (0,0) (f) {$f$};
	\node[pack, right=.6 of f] (g) {$g$};
	\node[outer pack, inner xsep=3pt, inner ysep=3pt, fit=(f) (g) ] (fg) {};
	\node[link,  left=.2 of f.west, label={[above=0, font=\scriptsize]:{$X$}}] (link1) {};
	\node[link, label={[above=0, font=\scriptsize]:{$Y$}}] at ($(f)!.5!(g)$) (link2) {};
	\node[link,  right=.2 of g.east, label={[above=0, font=\scriptsize]:{$Z$}}] (link3) {};
	\draw (fg.west) -- (link1);
	\draw (f.west) -- (link1);
	\draw (f.east) -- (link2);
	\draw (g.west) -- (link2);
	\draw (g.east) -- (link3);
	\draw (fg.east) -- (link3);
\end{tikzpicture}
\hspace{5em}
\begin{tikzpicture}[xscale=.4, yscale=.5]
	\node[link, label={[left=0, font=\scriptsize]:{$X$}}] at (0,3) (x1) {};
	\node[link, label={[left=0, font=\scriptsize]:{$Y$}}] at (0,2) (x2) {};
	\node[link, label={[left=0, font=\scriptsize]:{$Y$}}] at (0,1) (x3) {};
	\node[link, label={[left=0, font=\scriptsize]:{$Z$}}] at (0,0) (x4) {};
	\node[link, label={[above=0, font=\scriptsize]:{$X$}}] at (3,3) (n1) {};
	\node[link, label={[above=0, font=\scriptsize]:{$Y$}}] at (3,1.5) (n2) {};
	\node[link, label={[above=0, font=\scriptsize]:{$Z$}}] at (3,0) (n3) {};
	\node[link, label={[right=0, font=\scriptsize]:{$X$}}] at (6,3) (y1) {};
	\node[link, label={[right=0, font=\scriptsize]:{$Z$}}] at (6,0) (y2) {};
\begin{scope}[font=\tiny, decoration=brace]
	\draw[decorate] ($(x2)+(-1.3,-.3)$) to node[left] {$f$} ($(x1)+(-1.3,.3)$);
	\draw[decorate] ($(x4)+(-1.3,-.3)$) to node[left] {$g$} ($(x3)+(-1.3,.3)$);
\end{scope}
\begin{scope}[function]
	\draw (x1) to (n1);
	\draw (x2) to (n2);
	\draw (x3) to (n2);
	\draw (x4) to (n3);
	\draw (y1) to (n1);
	\draw (y2) to (n3);
\end{scope}
\end{tikzpicture}
\end{aligned}
\end{equation*}
\[X\concat Y \concat Y \concat Z \To{\id_X\concat\copair{\id_Y,\id_Y} \concat \id_Z} X \concat
Y\concat Z \xleftarrow{\id_X \concat!_Y\concat \id_Z} X \concat Z.\]

\end{example}

\cref{ex.comp} will be useful later, when we see that not only are the Frobenius structures of hypergraph categories controlled by cospans, but so are the identities and the composition law! 

\section{Free hypergraph categories} \label{sec.free_cats}

In this section we show that $\cospan$ is both the free hypergraph category and
the free $\of$-hypergraph category on a set $\Lambda$. We first discuss the
relationship between $\cospan[]$ and Frobenius monoids.

\subsection{Cospans and Frobenius monoids }
\cref{ex.cospan_scFrob}, where we define a certain Frobenius structure on the object $1$ in $\cospan[]$, is
central to the interplay between cospan-algebras and hypergraph categories. This
is because $\cospan[]$ is free special commutative Frobenius monoid on one
generator.

Write $\sigma\colon 2 \to 2$ for the cospan
\[
\begin{tikzpicture}[x=1cm, y=.75cm]
	\node[circle, minimum size=4pt, inner sep=0, fill, black] at (0,0) (L1) {};
	\node[circle, minimum size=4pt, inner sep=0, fill, black] at (0,1) (L2) {};
	\node[circle, minimum size=4pt, inner sep=0, fill, black] at (1,0) (M1) {};
	\node[circle, minimum size=4pt, inner sep=0, fill, black] at (1,1) (M2) {};
	\node[circle, minimum size=4pt, inner sep=0, fill, black] at (2,0) (R1) {};
	\node[circle, minimum size=4pt, inner sep=0, fill, black] at (2,1) (R2) {};
	\draw[function] (L1) -- (M2);
	\draw[function] (L2) -- (M1);
	\draw[function] (R1) -- (M1);
	\draw[function] (R2) -- (M2);
	\pgfresetboundingbox
	\useasboundingbox (L1.north) rectangle (R2.south);
\end{tikzpicture}
\]
and $\id\colon 1 \to 1$ for the cospan
\begin{equation}\label{eqn.identity_cospan}
\begin{tikzpicture}[x=1cm, y=.75cm]
	\node[circle, minimum size=4pt, inner sep=0, fill, black] at (0,0) (L1) {};
	\node[circle, minimum size=4pt, inner sep=0, fill, black] at (1,0) (M1) {};
	\node[circle, minimum size=4pt, inner sep=0, fill, black] at (2,0) (R1) {};
	\draw[function] (L1) -- (M1);
	\draw[function] (R1) -- (M1);
	\pgfresetboundingbox
	\useasboundingbox (L1.north) rectangle (R1.south);
\end{tikzpicture}
\end{equation}
\begin{lemma} \label[lemma]{lem.cospan_generators}
The category $(\cospan[],\concat)$ is generated, as a symmetric monoidal category, by the
morphisms $\mu$, $\eta$, $\delta$, $\epsilon$. That is, given any cospan
$c=(m \to p \from n)$, we may write down an expression that is equal to $c$, using only the cospans $\mu$, $\eta$, $\delta$, $\epsilon$; the cospans $\sigma$, $\id$; composition;
and monoidal product.
\end{lemma}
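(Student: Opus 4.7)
The plan is to factor any cospan through its apex and then reduce to the classical fact that every function in $\finset$ is generated, as a symmetric monoidal category, by the codiagonal and the unique map out of the empty set. Given a cospan $c = (m \xrightarrow{f} p \xleftarrow{g} n)$, introduce the two ``half-cospans''
\[
\tilde f \coloneqq (m \xrightarrow{f} p \xleftarrow{\id_p} p), \qquad \tilde g \coloneqq (p \xrightarrow{\id_p} p \xleftarrow{g} n).
\]
Because the pushout of $\id_p$ along $\id_p$ is $p$, the composite $\tilde f \cp \tilde g$ computed in $\cospan[]$ is exactly $c$. It therefore suffices to express each of $\tilde f$ and $\tilde g$ separately using the listed generators.

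For $\tilde f$, observe that the assignment $f \mapsto \tilde f$ defines an identity-on-objects, strict symmetric monoidal functor $(\finset, +) \to (\cospan[], \concat)$. Under this functor the codiagonal $\nabla\colon \ul{2} \to \ul{1}$ and the unique map $\ul{0} \to \ul{1}$ in $\finset$ are sent to the cospans $\mu$ and $\eta$ of \cref{ex.cospan_scFrob}, respectively, and symmetries and identities are preserved. It is classical---the statement that $(\finset, +)$ is the PROP for commutative monoids---that every morphism of $\finset$ can be written as an expression in $\nabla$, $\ul{0} \to \ul{1}$, symmetries, and identities, using composition and monoidal product. Applying the functor then yields the desired expression for $\tilde f$ in terms of $\mu, \eta, \sigma, \id$. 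The case of $\tilde g$ is dual: the assignment $g \mapsto \tilde g$ gives an identity-on-objects, strict symmetric monoidal functor $\finset\op \to \cospan[]$ carrying the diagonal and terminal map of $\finset\op$ to $\delta$ and $\epsilon$; by the dual classical generation result for $\finset\op$, this exhibits $\tilde g$ as an expression in $\delta, \epsilon, \sigma, \id$.

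The only step requiring real work is the classical generation result for $(\finset, +)$. We would prove it by induction on the size of the codomain $p$: when $p = 0$, any $f\colon m \to p$ forces $m = 0$ and $\tilde f$ is the empty identity. For the inductive step, use a symmetry to gather $f^{-1}(p) \subseteq \ul{m}$ at the right end of the domain, tensor onto the remainder an iterated codiagonal $\nabla^{(k)}\colon \ul{k} \to \ul{1}$---built from $k-1$ copies of $\nabla$ when $k = |f^{-1}(p)| \geq 1$, or the unique map $\ul{0} \to \ul{1}$ when $k = 0$---and invoke the inductive hypothesis on the restriction of $f$ to a function into $p - 1$. This bookkeeping is routine and forms the only---rather mild---obstacle in the proof; it could also be cited directly from the PROP literature (e.g.\ Lack's work on composing PROPs).
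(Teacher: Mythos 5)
Your proposal is correct, and its skeleton matches the paper's: both split the cospan at its apex, handling the left leg with $\mu,\eta,\sigma,\id$ and the right leg with $\delta,\epsilon,\sigma,\id$. The difference is in how the per-leg generation is established. The paper factors each leg explicitly as a permutation followed by an order-preserving surjection followed by an order-preserving injection, views each of the six resulting functions as a cospan with identity second leg, and writes down each one directly from the generators (surjections as iterated $\mu$'s, injections via $\eta$, permutations via $\sigma$, and dually). You instead make the two embeddings $(\finset,+)\hookrightarrow\cospan[]$ and $\finset\op\hookrightarrow\cospan[]$ explicit as strict symmetric monoidal functors, observe that $c=\tilde f\cp\tilde g$ by the pushout-along-identity computation, and then outsource the generation of each half to the classical statement that $(\finset,+)$ is the PROP for commutative monoids, sketching a clean induction on the size of the codomain in case a self-contained argument is wanted (your induction correctly covers the $k=0$ case via $\eta$, which is where the paper's injection step lives). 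What your route buys is modularity and a citable reduction to standard PROP results; what the paper's buys is a fully self-contained, explicit quasi-normal form for each leg without invoking the FinSet presentation. The underlying combinatorics is the same, and all the key claims in your write-up (that $\tilde f\cp\tilde g=c$, that $f\mapsto\tilde f$ is strict symmetric monoidal and sends $\nabla$, $\ul{0}\to\ul{1}$ to $\mu$, $\eta$, and dually for $\finset\op$) check out.
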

\begin{proof}
Note that any function can be factored as a permutation, followed by an
order-preserving surjection, followed by an order-preserving injection. Applying
this to each leg of a cospan $c= (\ul{m} \to \ul{p}\from\ul{n})$ gives a factorization
\begin{equation}\label{eqn.factor_cospan}
\begin{tikzcd}
	\ul{m}\ar[r, "\cong"]&
	\ul{m}\ar[r, two heads, "\text{ord.}"] &
	\ul{m'}\ar[r, tail, "\text{ord.}"] &
	\ul{p} &
	\ul{n'}\ar[l, tail, "\text{ord.}"'] &
	\ul{n}\ar[l, two heads, "\text{ord.}"'] &
	\ul{n}.\ar[l, "\cong"']
\end{tikzcd}
\end{equation}
Each of these six functions may be viewed as a cospan, with the other leg
supplied by the identity map on the codomain of the function. Since the pushout
of a morphism $a$ along an identity map is just $a$ again, this gives a
factorization of $c$ into the composite of six cospans. It remains to show each
of these cospans can be built from the required generators.

The key idea is that the permutations may be constructed by (composites of) braidings
$\sigma$, the order-preserving surjection $\ul{m} \surj \ul{m'}$ by multiplications,
$\ul{m'} \inj \ul{p}$ by units, $\ul{n'} \leftarrowtail \ul{n}$ by counits, and
$\ul{p} \twoheadleftarrow \ul{n'}$ by comultiplications. 

To elaborate, for the permutations, observe that any transposition of adjacent
elements may be constructed as the monoidal product of $\sigma$ with some number
of identities $\id$. It is then well known from group theory that transpositions
of adjacent elements generate all permutations. For order-preserving surjections
$s\colon \ul{m} \surj \ul{m'}$, observe that this $s$ is the monoidal product of
surjections $s_i\colon s^{-1}(i) \surj 1$ for each $i \in \ul{m'}$. Thus without
loss of generality we need only consider surjections $\ul{m} \surj \ul{1}$. Any
such surjection may be written as $(\id \concat \cdots \concat \id\concat \mu)\cp
\cdots \cp (\id \concat \mu) \cp \mu$, where this is the composite of $m-1$
terms, each of which is a sum of $m-2$ identities and $\mu$.  For order-preserving injections $j\colon{m'} \inj \ul{p}$, we may take
the monoidal product $j_1\concat \cdots \concat j_{p}$, where $j_i = \eta$ if
$j^{-1}(i) = \varnothing$, and $j_i = \id$ otherwise.  The remaining cases are
analogous.
\end{proof}

The above argument is perhaps clearest through a detailed example.
\begin{example}
We now build the cospan \cref{eqn.cospan_pic}---shown to the left below---from the generators in \cref{lem.cospan_generators}.
\[
\begin{aligned}
\begin{tikzpicture}
\begin{scope}[every node/.append style={circle, minimum size=3pt, inner sep=0,
fill, gray}]
	\foreach \i in {0,...,9} {
		\node at (0,-\i/3+1) (L\i) {};
	}
	\foreach \i in {0,...,6} {
		\node[black] at (1,-\i/2+1) (M\i) {};
	}
	\foreach \i in {0,...,5} {
		\node at (2,-3*\i/5+1) (N\i) {};
	}
\end{scope}
\node at (-.2,-2.7) {};
\begin{scope}[shorten <= 2pt, shorten >= 2pt, ->,>=stealth]
	\draw (L0) to (M1);
	\draw (L1) to (M2);
	\draw (L2) to (M3);
	\draw (L3) to (M3);
	\draw (L4) to (M1);
	\draw (L5) to (M2);
	\draw (L6) to (M5);
	\draw (L7) to (M3);
	\draw (L8) to (M5);
	\draw (L9) to (M6);
	\draw (N0) to (M0);
	\draw (N1) to (M1);
	\draw (N2) to (M3);
	\draw (N3) to (M4);
	\draw (N4) to (M4);
	\draw (N5) to (M6);
\end{scope}
\end{tikzpicture}
\end{aligned}
\quad
\begin{aligned}
\begin{tikzpicture}
\node at (0,1) {};
\node at (0,-.5) {$=$};
\node at (0,-2.7) {};
\end{tikzpicture}
\end{aligned}
\quad
\begin{aligned}
\begin{tikzpicture}
\begin{scope}[every node/.append style={circle, minimum size=3pt, inner sep=0,
fill, gray}]
	\foreach \i in {0,...,9} {
		\node at (0,-\i/3+1) (a\i) {};
	}
	\foreach \i in {0,...,9} {
		\node at (1,-\i/3+1) (b\i) {};
	}
	\foreach \i in {0,...,9} {
		\node at (2,-\i/3+1) (c\i) {};
	}
	\foreach \i in {0,...,9} {
		\node at (3,-\i/3+1) (d\i) {};
	}
	\foreach \i in {0,...,5} {
		\node at (4,-3*\i/5+1) (e\i) {};
	}
	\foreach \i in {0,...,4} {
		\node at (5,-3*\i/4+1) (f\i) {};
	}
	\foreach \i in {0,...,6} {
		\node[black] at (6,-\i/2+1) (g\i) {};
	}
	\foreach \i in {0,...,4} {
		\node at (7,-3*\i/4+1) (h\i) {};
	}
	\foreach \i in {0,...,5} {
		\node at (8,-3*\i/5+1) (i\i) {};
	}
\end{scope}
\begin{scope}[font=\tiny, decoration=brace]
	\draw[decorate] ($(d9)+(-.2,-.4)$) to node[below] {$\sigma$} ($(a9)+(.2,-.4)$);
	\draw[decorate] ($(f4)+(-.2,-.4)$) to node[below] {$\mu$} ($(d9)+(.2,-.4)$);
	\draw[decorate] ($(g6)+(-.2,-.4)$) to node[below] {$\eta$} ($(f4)+(.2,-.4)$);
	\draw[decorate] ($(h4)+(-.2,-.4)$) to node[below] {$\epsilon$} ($(g6)+(.2,-.4)$);
	\draw[decorate] ($(i5)+(-.2,-.4)$) to node[below] {$\delta$} ($(h4)+(.2,-.4)$);
\end{scope}
\begin{scope}[shorten <= 3pt, shorten >= 3pt, ->,>=stealth]
	\draw (a0) to (b0);
	\draw (a1) to (b1);
	\draw (a2) to (b2);
	\draw (a3) to (b4);
	\draw (a4) to (b3);
	\draw (a5) to (b5);
	\draw (a6) to (b7);
	\draw (a7) to (b6);
	\draw (a8) to (b8);
	\draw (a9) to (b9);
	\draw (b0) to (c0);
	\draw (b1) to (c1);
	\draw (b2) to (c3);
	\draw (b3) to (c2);
	\draw (b4) to (c5);
	\draw (b5) to (c4);
	\draw (b6) to (c6);
	\draw (b7) to (c7);
	\draw (b8) to (c8);
	\draw (b9) to (c9);
	\draw (c0) to (d0);
	\draw (c1) to (d2);
	\draw (c2) to (d1);
	\draw (c3) to (d4);
	\draw (c4) to (d3);
	\draw (c5) to (d5);
	\draw (c6) to (d6);
	\draw (c7) to (d7);
	\draw (c8) to (d8);
	\draw (c9) to (d9);
	\draw (d0) to (e0);
	\draw (d1) to (e0);
	\draw (d2) to (e1);
	\draw (d3) to (e1);
	\draw (d4) to (e2);
	\draw (d5) to (e2);
	\draw (d6) to (e3);
	\draw (d7) to (e4);
	\draw (d8) to (e4);
	\draw (d9) to (e5);
	\draw (e0) to (f0);
	\draw (e1) to (f1);
	\draw (e2) to (f2);
	\draw (e3) to (f2);
	\draw (e4) to (f3);
	\draw (e5) to (f4);
	\draw (f0) to (g1);
	\draw (f1) to (g2);
	\draw (f2) to (g3);
	\draw (f3) to (g5);
	\draw (f4) to (g6);
	\draw (h0) to (g0);
	\draw (h1) to (g1);
	\draw (h2) to (g3);
	\draw (h3) to (g4);
	\draw (h4) to (g6);
	\draw (i0) to (h0);
	\draw (i1) to (h1);
	\draw (i2) to (h2);
	\draw (i3) to (h3);
	\draw (i4) to (h3);
	\draw (i5) to (h4);
\end{scope}
\end{tikzpicture}
\end{aligned}
\]
\end{example}

\cref{lem.cospan_generators} implies that to define a symmetric monoidal functor $\cospan[] \to \cat{C}$
we simply need to say where to send the generators of $\cospan[]$, and check that the relevant
equations between these generators hold. The following proposition says that
these equations are exactly the axioms of special commutative Frobenius monoids.

\begin{proposition}[Cospan is the theory of special commutative Frobenius
monoids.] \label[proposition]{prop.cospan_as_scfm_theory}
Let $(\cat{C},\otimes)$ be a symmetric monoidal category. There is a
one-to-one correspondence:
\[
\left\{ \begin{array}{c} \mbox{special commutative} \\ 
\mbox{Frobenius monoids in } \cat{C} \end{array} \right\} 
\leftrightarrow
\left\{ \begin{array}{c} \mbox{strict symmetric monoidal functors} \\ 
(F,\varphi)\colon (\cospan[],\concat) \to (\cat{C},\otimes)
\end{array} \right\}.
\]
\end{proposition}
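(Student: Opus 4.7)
The plan is to construct maps in both directions between the two sets and verify they are mutually inverse. The forward direction (from functors to Frobenius monoids) is essentially immediate, while the reverse direction (from Frobenius monoids to functors) is where the real content lies.

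First, I would construct the map from strict symmetric monoidal functors to special commutative Frobenius monoids. Given such a functor $(F,\varphi)\colon(\cospan[],\concat)\to(\cat{C},\otimes)$, set $X\coloneqq F(1)$. Strictness gives $F(n)=X^{\otimes n}$ for all $n$, so the images $F(\mu), F(\eta), F(\delta), F(\epsilon)$ of the four Frobenius generators on $1\in\cospan[]$ (from \cref{ex.cospan_scFrob}) land in the right homsets of $\cat{C}$. The nine equations from \cref{def.scfm} hold for these images because they hold for $\mu,\eta,\delta,\epsilon$ in $\cospan[]$ and are preserved by the functor $F$.

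Second, I would construct the inverse map. Given a special commutative Frobenius monoid $(X,\mu_X,\eta_X,\delta_X,\epsilon_X)$ in $\cat{C}$, I want to define a strict symmetric monoidal functor $F$ by setting $F(n)\coloneqq X^{\otimes n}$ and sending $\mu\mapsto\mu_X$, $\eta\mapsto\eta_X$, $\delta\mapsto\delta_X$, $\epsilon\mapsto\epsilon_X$. By \cref{lem.cospan_generators}, every morphism of $\cospan[]$ can be written as a composite of monoidal products of these generators together with braidings and identities, and sending braiding in $\cospan[]$ to the braiding in $\cat{C}$ extends this prescription to a candidate functor on all morphisms. This determines $F$ uniquely \emph{if} it is well defined.

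The main obstacle is well-definedness: I must show that any two expressions in the generators that represent the same cospan also give equal morphisms in $\cat{C}$ via this assignment. Equivalently, I must show that all relations holding between the generating cospans in $\cospan[]$ are consequences of the special commutative Frobenius axioms together with symmetric monoidal coherence. The strategy is a normal-form argument built on the factorization \eqref{eqn.factor_cospan}: any morphism in $\cospan[]$ can be brought to a canonical form consisting of a permutation, then a pattern of multiplications, then units, then counits, then comultiplications, then a permutation, and any two factorizations of the same cospan can be transformed into one another by repeated application of (co)associativity, (co)commutativity, (co)unitality, the Frobenius law, and the special law. Equivalently, one can invoke the well-known result that the PROP freely generated by a special commutative Frobenius monoid is exactly the PROP of cospans of finite sets, due to Lack (building on Carboni--Walters); this is the key technical input.

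Finally, I would check mutual inverseness. Starting from a functor $F$, recovering the Frobenius monoid on $F(1)$ and building a new functor gives back $F$, since both agree on generators (by construction) and on all of $\cospan[]$ by the generation lemma. Conversely, starting from a Frobenius monoid on $X$, building $F$ and then reading off its Frobenius structure on $F(1)=X$ returns the original structure by definition. Both round-trips are therefore the identity, establishing the desired bijection.
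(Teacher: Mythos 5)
Your proposal is correct and follows essentially the same route as the paper's own (sketched) proof: the forward direction by evaluating on the generators of \cref{ex.cospan_scFrob}, the reverse direction by using \cref{lem.cospan_generators} to define the functor on generators and then invoking the fact that the relations among cospans are exactly the special commutative Frobenius axioms (the paper likewise defers this key well-definedness step to Lack's PROP-composition result rather than carrying out a normal-form argument), and finally the observation that the two constructions are mutually inverse.
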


\begin{proof}[Proof (sketch).]
Suppose we have a symmetric monoidal functor $(F,\varphi)\colon (\cospan[],\concat)
\to (\cat{C},\otimes)$.  Let $\mu,\eta,\delta,\epsilon$ be the Frobenius
generators in $\cospan[]$ as defined in \cref{ex.cospan_scFrob}. It is
straightforward to verify that 
\[
(F1,\enspace 
\varphi_{1,1}\cp F\mu,\enspace
\varphi_{\varnothing}\cp F\eta,\enspace 
F\delta\cp \varphi_{1,1}^{-1},\enspace 
F\epsilon\cp \varphi_\varnothing^{-1})
\]
is a special commutative Frobenius monoid in $\cat{C}$.

The converse is trickier. Suppose $(X,\mu_X,\eta_X,\delta_X,\epsilon_X)$ is a special
commutative Frobenius monoid in $\cat{C}$; we wish to define a strict symmetric
monoidal functor $\frob\colon \cospan[] \to \cat{C}$. We send the object $1$ to
$\frob(1):= X$; this implies $m \in \cospan[]$ maps to $\frob(m) \coloneqq
X^{\otimes m}$. Using \cref{lem.cospan_generators}, to define a candidate
strict symmetric monoidal functor we only need to say where to map the cospans
$\mu$, $\eta$, $\delta$, and $\epsilon$. This is easy: we map them to the
corresponding Frobenius generator on $X$.

Verifying functoriality, however, amounts to a technical exercise verifying that
the axioms of special commutative Frobenius monoids exactly describe pushouts of
finite sets.  This is treated at a high level, using distributive laws for
props, in \cite{lack2004composing}, and also remarked upon in
\cite{rosebrugh2005generic}; we are not aware of any more detailed
treatment in writing. Once functoriality is verified, it is straightforward to
also check that $\frob$ defines a strict symmetric monoidal functor.

These constructions are evidently inverses, and so we have the stated one-to-one
correspondence.
\end{proof}

\subsection{$\cospan$ as free hypergraph category.}\label{sec.cospan_is_free}

We now wish to show that $\cospan$ is a free hypergraph category on $\Lambda$.
We begin with a lemma that provides an easy way to equip an $\of(\Lambda)$
symmetric monoidal category (see \cref{def.objectwise_free_SMC}) with a
hypergraph structure: assign a Frobenius structure to each element of $\Lambda$.

\begin{definition}\label[definition]{def.objectwise_free_SMC}
An \emph{objectwise-free structure} on a strict symmetric monoidal category
$(\cat{C},\otimes)$ consists of a set $\Lambda$ and an isomorphism of monoids
$\List(\Lambda)\cong\ob(\cat{C})$. In this case we say that $(\cat{C},\otimes)$
is $\of$ or $\of(\Lambda)$.
\end{definition}

\begin{lemma}\label[lemma]{lemma.gen_hyp_struc}
Suppose that $\cat{C}$ is an $\of(\Lambda)$ symmetric monoidal category.
Assigning a Frobenius structure to each object $[l]$, for each $l \in \Lambda$,
induces a unique hypergraph structure on $\cat{C}$.

Furthermore, if $\cat{C}$ is as above, $\cat{D}$ is a hypergraph category, and
$F\colon\cat{C}\to\cat{D}$ is a symmetric monoidal functor, then $F$ is a
hypergraph functor iff $F$ preserves the Frobenius structure on each
$l\in\Lambda$.
\end{lemma}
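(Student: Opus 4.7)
The plan is to build the Frobenius structure on an arbitrary object $[l_1,\ldots,l_n]\in\cat C$ by induction on $n$, using \cref{eqn.hypergraph_def} and the unit coherence axiom to force the definition, and then verify that existence, uniqueness, and the functor statement all follow.

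For existence, the base case $n=0$ gives the monoidal unit $I$, on which \cref{def.hypergraph_cat_func} mandates the Frobenius structure $(\rho_I^{-1},\id_I,\rho_I,\id_I)$ of \cref{ex.scFrob_on_I}. The base case $n=1$ is the given datum on each $[l]$. For the inductive step, since $\cat C$ is strict and objectwise-free, every object of length $n\geq 2$ is uniquely of the form $X\otimes Y$ with $X$, $Y$ of strictly smaller length; the four formulas in \cref{eqn.hypergraph_def} then define $(\mu_{X\otimes Y},\eta_{X\otimes Y},\delta_{X\otimes Y},\epsilon_{X\otimes Y})$ from the previously constructed structures on $X$ and $Y$. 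Since concatenation is strictly associative on objects, there is no ambiguity in the choice of factorization (once one picks, say, a left-associated bracketing), and any other bracketing yields the same generators by an easy induction using \cref{eqn.hypergraph_def} itself applied in stages. One then checks by induction on $n$ that the nine Frobenius axioms of \cref{def.scfm} hold on $X\otimes Y$ assuming they hold on $X$ and $Y$; this is a routine string-diagram calculation, using only the monoid/comonoid axioms together with naturality of the symmetry.

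For uniqueness, any hypergraph structure on $\cat C$ whose Frobenius generators on each $[l]$ agree with the given ones must, by \cref{eqn.hypergraph_def}, have its generators on $X\otimes Y$ given by exactly the formulas used in the construction above, and must have its generators on $I$ given by the unit coherence axiom. Since every object is either $I$ or iteratively built by tensor products of the $[l]$'s, the Frobenius structure on every object is forced; hence the structure is unique.

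For the functor claim, the ``only if'' direction is immediate. For the ``if'' direction, suppose $F$ preserves the Frobenius structure on each $[l]$. Since $\cat D$ is a hypergraph category, its Frobenius structures satisfy \cref{eqn.hypergraph_def} and the unit coherence axiom. One then shows by induction on the length of the list that $F$ preserves Frobenius structures on every object: the inductive step follows because the formula \eqref{eqn.hyp_functor} for the transported structure on $F(X\otimes Y)$ is built from the same recipe in \cref{eqn.hypergraph_def}, intertwined by the laxators $\varphi$; strong monoidality of $F$ combined with the inductive hypothesis on $X$ and $Y$ yields equality with the chosen Frobenius structure on $F(X\otimes Y)$. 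The base case $n=0$ holds automatically since both $\cat C$ and $\cat D$ satisfy the unit coherence axiom and $F$ preserves the unitors up to $\varphi_I$.

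The main obstacle is the inductive verification of the Frobenius axioms for the constructed structure on $X\otimes Y$: keeping track of braidings in the Frobenius equation is the most error-prone step, but it reduces to the same string-diagram manipulation that shows the canonical Frobenius structure on a tensor product of special commutative Frobenius monoids is again a special commutative Frobenius monoid, which is standard (compare \cref{ex.cospanc_scFrob}).
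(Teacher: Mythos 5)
Your proposal is correct and follows essentially the same route as the paper's own (much terser) proof: induction on the length of the list, with the unit case forced by the unit coherence axiom and the tensor case forced by \cref{eqn.hypergraph_def}, plus the routine check that the induced structure on a product is again a special commutative Frobenius monoid. The extra details you supply (independence of bracketing, the inductive verification of the nine axioms, and the laxator bookkeeping for the functor claim) are exactly what the paper leaves implicit as "straightforward."
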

\begin{proof}
Suppose that for each $l\in\Lambda$ we are given a Frobenius structure
$(\mu_l,\eta_l,\delta_l,\epsilon_l)$. We need to show that this uniquely
determines a Frobenius structure on every object, satisfying
\eqref{eqn.hypergraph_def} and restricting to the chosen one on each
$l\in\Lambda$. Any object in $\cat{C}$ can be uniquely written as a list
$[l_1,\cdots,l_n]$ for some $n\in\nn$. By induction, we may assume $n=0$ or
$n=2$. When $n=0$ the Frobenius structure is by definition given by the unitors,
while when $n=2$ the Frobenius structure on the monoidal product $[l,m]=l\concat m$ is
forced to be that given by \cref{eqn.hypergraph_def}.

The second claim is similar and straightforward.
\end{proof}

\begin{remark}
It will be useful to give a more explicit description of the construction from \cref{lemma.gen_hyp_struc}, at least in the case of $\mu$, in order to fix ideas. Given an object $l=l_1\concat\cdots\concat l_n$ and a multiplication map $\mu_i\colon l_i\concat l_i\to l_i$ for each $i$, the multiplication map $\mu_l$ is given by
\begin{equation}\label{eqn.explicit_mu_lists}
\begin{aligned}
\begin{tikzpicture}[spider diagram, font=\small, baseline=(bl)]
	\node[spider={2}{1}] (a) {};
	\node[left=0 of a_in1] {$l_1\concat\cdots\concat l_n$};
	\node[left=0 of a_in2] {$l_1\concat\cdots\concat l_n$};
	\node[right=0 of a_out1] (llab) {$l_1\concat\cdots\concat l_n$};
	\node[coordinate, right=1.5 of llab] (helper) {};
	\node[spider={2}{1}, above right=0.3 and 3 of llab] (b) {};
	\node[spider={2}{1}, below right=0.3 and 3 of llab] (c) {};
	\coordinate (x1) at (b_in1-|helper);
	\coordinate[below=.7 of x1] (x2);
	\coordinate (x4) at (c_in2-|helper);
	\coordinate[above=.7 of x4] (x3);
	\draw (b_in1) -- (x1);
	\draw (x3) to (b_in2);
	\draw  (x2) to (c_in1);
	\draw (c_in2) -- (x4);
	\node[left=0 of x1] (in11) {$l_1$};
	\node[left=0 of x2] (in1n) {$l_n$};
	\node[left=0 of x3] (in21) {$l_1$};
	\node[left=0 of x4] (in2n) {$l_n$};
	\node at ($(in11)!.45!(in1n)$) {$\vdots$};
	\node at ($(in21)!.45!(in2n)$) {$\vdots$};
	\node at ($(in11)!.35!(in1n)+(.6,0)$) {$\vdots$};
	\node at ($(in21)!.35!(in2n)+(.6,0)$) {$\vdots$};
	\node at ($(b)!.45!(c)$) {$\vdots$};
	\node[right=0 of b_out1] (out1) {$l_1$};
	\node[right=0 of c_out1] (outn) {$l_n$};
	\node at ($(out1)!.45!(outn)$) {$\vdots$};
	\node at ($(llab.east)!.3!(helper)$) {$=$};
\end{tikzpicture}
\end{aligned}
\end{equation}
\[
\mu_l\colon (l_1\concat\cdots\concat l_n)\concat (l_1\concat\cdots\concat l_n) \cong
l_1 \concat l_1 \concat\cdots\concat l_n \concat l_n \To{\mu_1
\concat \dots \concat \mu_n} l_1\concat\cdots\concat l_n.
\]
\end{remark}

\begin{example}\label[example]{ex.hypergraph_structures}
The category $\cospan[\Lambda]$ can be given the structure of a hypergraph category. Indeed, it is enough by \cref{lemma.gen_hyp_struc} to give a Frobenius structure on each $l\in\Lambda$. We assign them all the same structure, namely the one given in \cref{ex.cospan_scFrob}.

Similarly, since $\linrel$ is objectwise-free on $\rr$, the Frobenius structures on $\rr$ given in \cref{ex.linrel_scfs_copy,ex.linrel_scfs_add} induce two different hypergraph structures on $\linrel$.
\end{example}

\begin{corollary}\label[corollary]{cor.cospan_setlist_hyp}
The functor $\cospan[-]\colon\smset_{\List}\to\smcat$ from \cref{prop.cospan_setlist_cat} factors through the inclusion $\hyp_\of\ss\smcat$, giving a functor
\[\cospan[-]\colon\smset_{\List}\to\hyp_\of.\]
\end{corollary}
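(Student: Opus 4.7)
The plan is to verify two things: first, that each $\cospan[\Lambda]$ is objectwise-free on $\Lambda$ and carries a canonical hypergraph structure; and second, that for each morphism $f\colon\Lambda\to\List(\Lambda')$ in $\smset_\List$, the functor $\cospan[f]$---already known to be a strict symmetric monoidal functor by \cref{prop.cospan_setlist_cat}---is a hypergraph functor. Functoriality of the factored assignment then comes for free from \cref{prop.cospan_setlist_cat}.

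The first point is immediate: by construction $\ob(\cospan[\Lambda])=\List(\Lambda)$ with $\concat$ given by concatenation, so the identity supplies a monoid isomorphism $\List(\Lambda)\cong\ob(\cospan[\Lambda])$, making $\cospan[\Lambda]$ objectwise-free on $\Lambda$. The canonical hypergraph structure is the one produced in \cref{ex.hypergraph_structures}: by \cref{lemma.gen_hyp_struc} it suffices to place on each $l\in\Lambda$ the Frobenius structure of \cref{ex.cospan_scFrob}.

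For the second point, the second clause of \cref{lemma.gen_hyp_struc} reduces the check to verifying that $\cospan[f]$ sends the chosen Frobenius structure on each $[l]\in\cospan[\Lambda]$ to the chosen Frobenius structure on $\cospan[f]([l])=f(l)\in\cospan[\Lambda']$. Writing $f(l)=[l'_1,\ldots,l'_k]$ and unwinding \cref{eqn.cospan_on_kleisli_morphisms}, the image $\cospan[f](\mu_{[l]})$ is the labeled cospan with left leg $[l'_1,\ldots,l'_k,l'_1,\ldots,l'_k]$, apex $[l'_1,\ldots,l'_k]$, and right leg $[l'_1,\ldots,l'_k]$, whose left map sends positions $j$ and $k+j$ both to position $j$ of the apex and whose right map is the identity. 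Comparison with the explicit formula \cref{eqn.explicit_mu_lists} shows this is precisely $\mu_{f(l)}$, and the analogous verifications go through for $\eta_{[l]}$, $\delta_{[l]}$, and $\epsilon_{[l]}$.

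The main obstacle is simply the combinatorial bookkeeping of matching the flattening procedure against the shuffle isomorphism implicit in \cref{eqn.explicit_mu_lists}; there is no conceptual difficulty beyond careful indexing.
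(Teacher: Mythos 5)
Your proposal is correct and follows essentially the same route as the paper: objectwise-freeness is immediate from $\ob(\cospan)=\List(\Lambda)$, the hypergraph structure is the one from \cref{ex.hypergraph_structures}, and \cref{lemma.gen_hyp_struc} reduces the hypergraph-functor check to the generators $[l]$, where unwinding \cref{eqn.cospan_on_kleisli_morphisms} and comparing with \cref{eqn.explicit_mu_lists} shows $\cospan[f](\mu_{[l]})=\mu_{f(l)}$ (the paper carries out exactly this computation for $\mu$ and leaves $\eta,\delta,\epsilon$ to the reader). Your explicit description of the resulting cospan matches the paper's diagram, so there is nothing missing.
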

\begin{proof}
In \cref{ex.hypergraph_structures}, we showed that $\cospan$ is a hypergraph category for each $\Lambda$, and it is objectwise-free because $\ob(\cospan)=\List(\Lambda)$. If $f\colon\Lambda\to\List(\Lambda')$ is a function, we need to check that $\cospan[f]$ preserves the Frobenius structure $(\mu,\eta,\delta,\epsilon)$ on every object. This is a simple calculation; we carry it out for $\mu$ and leave the others to the reader.

By \cref{lemma.gen_hyp_struc}, it suffices to check that $\mu$ is preserved for for an arbitrary $l\in\Lambda$. The cospan $\mu_l$ is shown on the left of the diagram below, and if $f(l)$ is a list of length $n$, then by \cref{eqn.cospan_on_kleisli_morphisms}, $\cospan[f](\mu_l)$ is shown on the right
\[
\begin{tikzcd}[row sep=15pt]
	{1\concat 1}\ar[dr, bend right, "\copair{l,l}"']\ar[r,
	"\copair{\id,\id}"]&{1}\ar[d, "l"]&{1}\ar[dl, bend left, "l"]\ar[l, equal]\\
	&\Lambda\ar[d, "f"]\\[5pt]
	&\List(\Lambda')
\end{tikzcd}
\hspace{.7in}
\begin{tikzcd}[row sep=55pt,column sep=35pt]
	\ul{n}\concat\ul{n}\ar[dr, bend right, "\copair{f(l),f(l)}"']\ar[r,
	"\copair{\id_n,\id_n}"]&\ul{n}\ar[d, "{f(l)}"]&\ul{n}\ar[dl, bend left, "f(l)"]\ar[l, equal]\\
	&\Lambda'
\end{tikzcd}
\]
But this cospan is exactly the one from \cref{lemma.gen_hyp_struc}; see also \cref{eqn.explicit_mu_lists}.
\end{proof}

We also denote by $\cospan[-]$ the composite of the functor from \cref{cor.cospan_setlist_hyp} with the faithful inclusion $\smset\to\smset_\List$ and the fully faithful inclusion $\hyp_\of\to\hyp$:
\[
\begin{tikzcd}[column sep=45pt]
	\smset\ar[d, "(\ff)"']\ar[r,"{\cospan[-]}"]&\hyp\\
	\smset_\List\ar[r,"{\cospan[-]}"']&\hyp_\of\ar[u, "(\ff)"']
\end{tikzcd}
\]

The following theorem states that $\cospan$ is the free hypergraph category on
$\Lambda$. In particular, this theorem produces a hypergraph
functor
\[
\frob_{\cat{H}}\colon \cospan[\ob(\cat{H})] \to \cat{H}
\]
which is so-named because its image provides all the Frobenius
morphisms on all the objects of $\cat{H}$. In fact, $\frob$ arises as the counit
of an adjunction.

\begin{theorem}\label[theorem]{thm.adjunction_cospan_ob}
$\cospan$ is the free hypergraph category on the set $\Lambda$. That is, there is an adjunction
\[
\begin{tikzcd}[column sep=45pt]
	\smset\ar[r, shift left, "{\cospan[-]}"]&
	\hyp.\ar[l, shift left, "\ob"]
\end{tikzcd}
\]
\end{theorem}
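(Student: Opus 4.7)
I will prove the adjunction by exhibiting a natural bijection
\[
  \hyp\bigl(\cospan[\Lambda],\cat{H}\bigr)\;\cong\;\smset\bigl(\Lambda,\ob\cat{H}\bigr).
\]
The unit is the singleton-list transformation $\sing\colon\Lambda\to\List(\Lambda)=\ob(\cospan[\Lambda])$, $l\mapsto[l]$, and the counit is the hypergraph functor $\frob_\cat{H}\colon\cospan[\ob\cat{H}]\to\cat{H}$ corresponding under this bijection to $\id_{\ob\cat{H}}$.

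First I would construct, for each function $f\colon\Lambda\to\ob\cat{H}$, a hypergraph functor $\tilde f\colon\cospan[\Lambda]\to\cat{H}$. On objects, since $\cospan[\Lambda]$ is objectwise-free on $\Lambda$, strict monoidality forces $\tilde f([l_1,\ldots,l_n])\coloneqq f(l_1)\otimes\cdots\otimes f(l_n)$. For the action on morphisms I would invoke a mildly generalized, parametrized form of \cref{prop.cospan_as_scfm_theory}: by the proof of \cref{lem.cospan_generators}, every morphism of $\cospan[\Lambda]$ can be written as a composite and monoidal product of identities, braidings, and the Frobenius generators $\mu_l,\eta_l,\delta_l,\epsilon_l$ at each label $l\in\Lambda$, and the relations between such expressions are precisely the special commutative Frobenius monoid axioms applied separately at each label. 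Each object $f(l)\in\cat{H}$ carries a canonical Frobenius structure coming from the hypergraph structure on $\cat{H}$, and these structures satisfy all the required axioms by hypothesis; this supplies an interpretation of all generators and relations and hence defines $\tilde f$ as a strict symmetric monoidal functor. By the second half of \cref{lemma.gen_hyp_struc}, $\tilde f$ is a hypergraph functor because it preserves the Frobenius structure on each generator $[l]$.

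Conversely, any hypergraph functor $F\colon\cospan[\Lambda]\to\cat{H}$ restricts to a function $\overline F\coloneqq\ob(F)\cp\sing\colon\Lambda\to\ob\cat{H}$. I would then check that the assignments $f\mapsto\tilde f$ and $F\mapsto\overline F$ are mutually inverse. The identity $\overline{\tilde f}=f$ is immediate from the definition of $\tilde f$ on singleton lists. For the other direction, both $\widetilde{\overline F}$ and $F$ are strict symmetric monoidal and agree on the objects $[l]$ for $l\in\Lambda$, so they agree on all objects of $\cospan[\Lambda]$; moreover, both are hypergraph functors sending the Frobenius generators on $[l]$ to the Frobenius generators on $F([l])$, so by \cref{lem.cospan_generators} they agree on every morphism. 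Naturality of the bijection in $\Lambda$ and in $\cat{H}$ is direct from the explicit construction of $\tilde f$.

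The main obstacle is the well-definedness of $\tilde f$ on morphisms: one must verify that the $\Lambda$-indexed family of Frobenius structures supplied by the hypergraph structure of $\cat{H}$ actually models the presentation of $\cospan[\Lambda]$ as a strict symmetric monoidal category freely generated by a $\Lambda$-indexed family of SCFMs. This is precisely where the hypergraph compatibility axioms \cref{eqn.hypergraph_def} and the unit coherence axiom $\eta_I=\id_I=\epsilon_I$ are used: together with the SCFM relations holding at each $f(l)$, they ensure that the induced Frobenius structure on any tensor product of generators in $\cat{H}$ matches the pushout-composition of labeled cospans in $\cospan[\Lambda]$. Modulo this point, which amounts to the labeled generalization of \cref{prop.cospan_as_scfm_theory}, the remainder of the proof is a formal verification of the triangle identities.
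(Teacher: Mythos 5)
There is a genuine gap in your inverse-checking step. In $\hyp$ a morphism is a \emph{strong} symmetric monoidal functor $(F,\varphi)$ (\cref{def.hypergraph_cat_func}), not a strict one, so your claim that ``both $\widetilde{\overline F}$ and $F$ are strict symmetric monoidal and agree on the objects $[l]$, so they agree on all objects'' does not hold: a hypergraph functor $F\colon\cospan[\Lambda]\to\cat{H}$ is under no obligation to satisfy $F([l_1,l_2])=F([l_1])\otimes F([l_2])$ on the nose, only up to the coherence isomorphism $\varphi$, and nothing in the hypergraph axioms forces $F$ to agree with the strict functor $\widetilde{\overline F}$ on longer lists (in a hypergraph category with distinct isomorphic objects carrying corresponding Frobenius structures, one can genuinely send $[l_1,l_2]$ elsewhere). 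So the map $F\mapsto\overline F$ is not obviously injective, and the hom-set bijection $\hyp(\cospan[\Lambda],\cat{H})\cong\smset(\Lambda,\ob\cat{H})$ cannot be established by the argument you give; at best you get $\widetilde{\overline F}\cong F$ by a monoidal natural isomorphism. This is exactly the delicate point of the theorem, and it is worth flagging that the paper's own proof is also at its breeziest here: it avoids stating the bijection and instead constructs the counit $\frob_{\cat{H}}$ directly and asserts naturality and the triangle identities, where the same strict-versus-strong wrinkle is hiding (naturality of $\frob$ against a merely strong monoidal $G\colon\cat{H}\to\cat{H}'$ holds only up to isomorphism on objects). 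If you keep the hom-set formulation you must either restrict attention to the strict/identity-on-objects setting (as in \cref{cor.factor_adj}) or address this coherence issue explicitly.

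On the other main point, your ``labeled generalization of \cref{prop.cospan_as_scfm_theory}'' is not how the paper proceeds, and you can avoid having to prove it. The paper observes that $\cospan[\ob(\cat{H})]$ is the coproduct, in the category of symmetric monoidal categories and strong symmetric monoidal functors, of $\ob(\cat{H})$-many copies of $\cospan[]$; the counit $\frob_{\cat{H}}$ is then the copairing of the one-object functors supplied by \cref{prop.cospan_as_scfm_theory}, and \cref{lemma.gen_hyp_struc} (whose use in your write-up is correct) upgrades it to a hypergraph functor. This coproduct observation discharges precisely the ``main obstacle'' you defer---the multi-sorted presentation of $\cospan[\Lambda]$ by $\Lambda$-indexed special commutative Frobenius monoids---without redoing a generators-and-relations argument in the labeled setting. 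Also, a minor point: the compatibility axioms \cref{eqn.hypergraph_def} and the unit coherence axiom are not what make $\tilde f$ well defined as a monoidal functor (that is the single-sorted Frobenius theory at each label); they are what make $\tilde f$ a \emph{hypergraph} functor, via the second half of \cref{lemma.gen_hyp_struc}.
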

\begin{proof}
We want to show $\ob$ is right adjoint to $\cospan[-]$, so we provide a unit
transformation and counit transformation and check the triangle identities.

For any $\Lambda\in\smset$, we have $\ob(\cospan)=\List(\Lambda$), so we take the unit map $\Lambda\to\ob(\cospan)$ to be the unit natural transformation $\sing$ from the $\List$ monad. 

Suppose $\cat{H}$ is a hypergraph category; for the counit of the adjunction, we
need a hypergraph functor $\cospan[\ob(\cat{H})]\to\cat{H}$. Note that for each object of
$\cat{H}$, \cref{prop.cospan_as_scfm_theory} gives a strong symmetric monoidal
functor $\cospan[] \to \cat{H}$. Observing that $\cospan[\ob(\cat{H})]$ is the
coproduct, in the category of symmetric monoidal categories and strong symmetric
monoidal functors, of $\ob(\cat{H})$-many copies of $\cospan[]$, the copairing of
all these functors thus gives a strong symmetric monoidal functor
$\frob_{\cat{H}}\colon \cospan[\ob(\cat{H})] \to \cat{H}$. It is straightforward
to observe that this functor is hypergraph.

It remains to check that $\frob$ is natural (as its subscripts $\cat{H}$ vary), and that the
triangle identities hold. The map $\frob$ is natural because hypergraph functors
$\cat{H}\to\cat{H}'$ are required to preserve Frobenius structures. Finally, for the triangle
identities, we need to check that the following diagrams commute:
\[
\begin{tikzcd}[column sep=0]
  \cospan \ar[rd, pos=.4, "{\cospan[\sing_\Lambda]}"'] \ar[rr, equal]
&&
  \cospan
\\
&
  \cospan[\ob(\cospan)]\ar[ru, "{\frob_{\cospan}}"']
\end{tikzcd}
\hspace{.35in}
\begin{tikzcd}[column sep=0]
&
	\ob(\cospan[\ob(\cat{H})])\ar[rd, pos=.6, "\ob(\frob_{\cat{H}})"]
\\
	\ob(\cat{H})\ar[ru, pos=.4, "\sing_{\ob(\cat{H})}"]\ar[rr, equal]
&&
	\ob(\cat{H})
\end{tikzcd}
\]
Both are straightforward calculations.
\end{proof}

\subsection{$\cospan$ as the free hypergraph category over
$\Lambda$.}\label{sec.cospan_is_free2}

\begin{corollary}\label[corollary]{cor.factor_adj}
The functor $\cospan[-]\colon\smset_\List\to\hyp_\of$, constructed in \cref{cor.cospan_setlist_hyp}, is fully faithful and has a right adjoint: 
\[
\begin{tikzcd}[column sep=large]
	\smset_\List\ar[r, shift left, "{\cospan[-]}"]&
	\hyp_\of.\ar[l, shift left, "\gens"]
\end{tikzcd}
\]
Moreover, the components $\frob_{\cat{H}}\colon\cospan[\gens(\cat{H})]\to\cat{H}$ of the counit transformation are $\io$ hypergraph functors.
\end{corollary}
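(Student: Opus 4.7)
The plan is to obtain this adjunction by factoring the adjunction $\cospan[-]\dashv\ob$ of \cref{thm.adjunction_cospan_ob} through the inclusions $\smset\hookrightarrow\smset_\List$ and $\hyp_\of\hookrightarrow\hyp$. First I would define $\gens\colon\hyp_\of\to\smset_\List$ on an $\of(\Lambda)$ hypergraph category $\cat{H}$, with chosen isomorphism $i\colon\List(\Lambda)\cong\ob(\cat{H})$, by $\gens(\cat{H})\coloneqq\Lambda$; and on a hypergraph functor $F\colon\cat{H}\to\cat{H}'$ with target $\of(\Lambda')$, by $\gens(F)(l)\coloneqq F(l)\in\List(\Lambda')$, using the identifications $\Lambda\hookrightarrow\List(\Lambda)\cong\ob(\cat{H})$ and $\ob(\cat{H}')\cong\List(\Lambda')$. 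Functoriality of $\gens$ with respect to Kleisli composition reduces to the observation that a hypergraph functor between $\of$ categories, being compatible with the Frobenius structures determined by \cref{lemma.gen_hyp_struc}, must send the list $[l_1,\ldots,l_n]\in\ob(\cat{H})$ to $F(l_1)\concat\cdots\concat F(l_n)\in\ob(\cat{H}')$.

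Next I would establish the adjunction via the hom-set bijection
\[
\hyp_\of(\cospan[\Lambda],\cat{H}) \;\cong\; \smset_\List(\Lambda,\gens(\cat{H})).
\]
Since $\hyp_\of\hookrightarrow\hyp$ is full, the left-hand side equals $\hyp(\cospan[\Lambda],\cat{H})$, which by \cref{thm.adjunction_cospan_ob} is naturally isomorphic to $\smset(\Lambda,\ob(\cat{H}))$. Applying the $\of$ identification yields $\smset(\Lambda,\List(\gens(\cat{H})))=\smset_\List(\Lambda,\gens(\cat{H}))$, and naturality in both variables is inherited from \cref{thm.adjunction_cospan_ob}.

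Fully faithfulness of $\cospan[-]$ then follows from the standard criterion that a left adjoint is fully faithful iff its unit is a natural isomorphism. The unit component at $\Lambda$ is the morphism in $\smset_\List$ corresponding under the hom-bijection to $\id_{\cospan[\Lambda]}$; unwinding shows it is $\sing_\Lambda\colon\Lambda\to\List(\Lambda)$, which is the identity in $\smset_\List$, hence an iso.

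Finally, the counit $\frob_\cat{H}\colon\cospan[\gens(\cat{H})]\to\cat{H}$ corresponds under the bijection to $\id_{\gens(\cat{H})}\in\smset_\List(\gens(\cat{H}),\gens(\cat{H}))$; equivalently, it is the hypergraph functor picked out by the composite $\gens(\cat{H})\To{\sing}\ob(\cospan[\gens(\cat{H})])\cong\ob(\cat{H})$. Tracing through the construction in \cref{thm.adjunction_cospan_ob}, $\frob_\cat{H}$ sends each list $[l_1,\ldots,l_n]\in\ob(\cospan[\gens(\cat{H})])=\List(\Lambda)$ to $[l_1,\ldots,l_n]\in\ob(\cat{H})$ under the $\of$ identification, so $\frob_\cat{H}$ acts as the identity on objects and is thus $\io$. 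The main bookkeeping obstacle in the whole proof is the compatibility between the various $\of$-identifications and the strong (rather than strict) monoidal structure carried by hypergraph functors; this is handled by Frobenius preservation as sketched above, but care is needed to keep the identifications straight.
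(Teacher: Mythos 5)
Your overall route is the paper's: define $\gens$ by $(\Lambda,\cat{H},i)\mapsto\Lambda$, lean on \cref{thm.adjunction_cospan_ob}, deduce full faithfulness from the unit being the identity Kleisli map, and read off that $\frob_\cat{H}$ is $\io$ because every object of an $\of$ category is a list of generators. The only packaging difference is that the paper exhibits the unit and counit directly (the counit being the copairing construction of \cref{thm.adjunction_cospan_ob} with $\ob(\cat{H})$ replaced by $\Lambda$, so the triangle identities are trivial), whereas you phrase the adjunction as a hom-set bijection obtained from fullness of $\hyp_\of\subseteq\hyp$ and the $\of$ identification.

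The gap is in your naturality claims. The bijection $\hyp(\cospan[\Lambda],\cat{H})\cong\smset(\Lambda,\ob\cat{H})$ of \cref{thm.adjunction_cospan_ob} is natural only with respect to ordinary functions $\Lambda''\to\Lambda$; what \cref{cor.factor_adj} requires is naturality with respect to arbitrary Kleisli morphisms $g\colon\Lambda''\to\List(\Lambda)$, i.e.\ compatibility with the flattening functor $\cospan[g]$ of \cref{cor.cospan_setlist_hyp}. Unwinding that compatibility (and likewise the functoriality of $\gens$ and the naturality in $\cat{H}$ of the identification $\List(\gens\cat{H})\cong\ob\cat{H}$) leads to the identity $F([l_1,\ldots,l_n])=F([l_1])\concat\cdots\concat F([l_n])$ for a hypergraph functor $F$ between $\of$ categories. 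Your justification of this---that compatibility with the Frobenius structures of \cref{lemma.gen_hyp_struc} forces it---does not hold: a hypergraph functor is only \emph{strong} monoidal, and preservation of Frobenius structure as in \cref{eqn.hyp_functor} is a condition on morphisms; it yields a canonical isomorphism $F(x\concat y)\cong F(x)\concat F(y)$ via the laxators, not an equality of objects. So the assertion that "naturality in both variables is inherited from \cref{thm.adjunction_cospan_ob}" skips exactly the new content of the corollary. (The paper is itself terse here---it declares $\gens$ "clearly functorial"---but it sidesteps your naturality-in-$\Lambda$ problem because its explicitly constructed unit and counit are identity-on-objects; your write-up instead rests on a strictness-on-objects claim that is false in general and would need to be repaired, e.g.\ by restricting attention to the strict situation actually needed or by verifying the Kleisli naturality squares directly.)
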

\begin{proof}[Proof (sketch).]
As a right adjoint to $\cospan[-]$, we propose the functor $\gens$ given by sending an $\of$-hypergraph category $(\Lambda,\cat{H},i)$ to the set $\Lambda$ of generators. It is clearly functorial.

The proof that $\gens$ is right adjoint to $\cospan[-]$ is analogous to, though a bit easier than, that of \cref{thm.adjunction_cospan_ob}. Rather than the unit map being $\sing$, the unit of the $\List$ monad, here it is simply the identity map $\Lambda\to\Lambda$ in $\smset_\List$, so the triangle identities become trivial. For the $\cat{H}$-component $\frob_{\cat{H}}$ of the counit transformation, simply replace $\ob(\cat{H})$ with $\Lambda$ throughout the proof. For any $x\in\cat{H}$ we have a list $(x_1,\ldots,x_n)\in\List(\Lambda)$ with $x=x_1\concat\cdots\concat x_n$, so the hypergraph functor $\frob_{\cat{H}}$ is indeed identity-on-objects.

Finally it is well-known that a left adjoint is fully faithful iff the corresponding unit map is a natural isomorphism, and indeed for any $\Lambda$, the unit map $\Lambda\to\gens(\cospan[\Lambda])=\Lambda$ is the identity. 
\end{proof}

\begin{proposition}\label[proposition]{prop.cospan_frob_id}
In the case $\cat{H}=\cospan$, the counit map
$
\frob_{\cospan}\colon\cospan\to\cospan
$
is the identity.
\end{proposition}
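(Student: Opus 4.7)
The plan is to derive the statement almost immediately from the triangle identity for the adjunction $\cospan[-]\dashv\gens$ established in \cref{cor.factor_adj}. That corollary records that, because $\gens(\cospan[\Lambda])=\Lambda$, the unit component $\eta_\Lambda\colon\Lambda\to\gens(\cospan[\Lambda])$ is simply $\id_\Lambda$ in $\smset_\List$, and the counit component at $\cat{H}=\cospan[\Lambda]$ is $\frob_{\cospan[\Lambda]}$. The relevant triangle identity then reads
\[
\cospan[\Lambda]\To{\cospan[\eta_\Lambda]}\cospan[\gens(\cospan[\Lambda])]\To{\frob_{\cospan[\Lambda]}}\cospan[\Lambda]\;=\;\id_{\cospan[\Lambda]}.
\]
Substituting $\eta_\Lambda=\id_\Lambda$ and invoking functoriality of $\cospan[-]$ from \cref{cor.cospan_setlist_hyp}, the first arrow is the identity, so $\frob_{\cospan[\Lambda]}=\id_{\cospan[\Lambda]}$.

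A more explicit verification is also available as a sanity check. Unfolding the construction of $\frob_{\cat{H}}$ from the proof of \cref{thm.adjunction_cospan_ob}, specialized to $\cat{H}=\cospan[\Lambda]$, shows that $\frob_{\cospan[\Lambda]}$ is the copairing, in the category of symmetric monoidal categories, of strict symmetric monoidal functors $F_l\colon\cospan[]\to\cospan[\Lambda]$, one for each $l\in\Lambda$. Via \cref{prop.cospan_as_scfm_theory}, each $F_l$ is uniquely determined by sending the canonical Frobenius structure on $1\in\cospan[]$ to the chosen Frobenius structure on $[l]\in\cospan[\Lambda]$. But by the very definition in \cref{ex.hypergraph_structures}, that chosen structure is the image of the canonical one under the $l$-th coproduct inclusion $\iota_l\colon\cospan[]\to\cospan[\Lambda]$; hence $F_l=\iota_l$, and the copairing of the coproduct inclusions is, by the universal property of the coproduct, the identity. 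No serious obstacle is anticipated: the entire proof is a matter of unpacking the definitions put in place in \cref{cor.factor_adj,ex.hypergraph_structures}.
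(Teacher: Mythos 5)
Your second, ``sanity check'' argument is in fact the paper's own proof, and it is the part of your proposal that carries the logical weight: the paper likewise unwinds $\frob_{\cospan}$ as the copairing over $l\in\Lambda$ of the functors $\frob_l\colon\cospan[]\to\cospan$ classifying (via \cref{prop.cospan_as_scfm_theory}) the chosen Frobenius structures of \cref{ex.cospan_scFrob,ex.hypergraph_structures}, identifies each $\frob_l$ with the $l$th coproduct inclusion, and concludes $\frob_{\cospan}=[\frob_l]_{l\in\Lambda}=\id$ by the universal property of the coproduct (the paper phrases the coproduct $\cospan\cong\coprod_{l\in\Lambda}\cospan[]$ as living in $\hyp$, using that $\cospan[-]$ is a left adjoint, but this difference is immaterial).

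Your primary route, however, should not be presented as the proof. With the unit of $\cospan[-]\dashv\gens$ equal to the identity, the triangle identity you invoke, $\cospan[\id_\Lambda]\cp\frob_{\cospan[\Lambda]}=\id_{\cospan[\Lambda]}$, \emph{is} verbatim the statement of \cref{prop.cospan_frob_id}; and the sketched proof of \cref{cor.factor_adj} never actually verifies it---its assertion that ``the triangle identities become trivial'' is exactly the content this proposition is meant to supply. So deducing the proposition from that triangle identity is circular relative to how the adjunction is actually established: checking the counit-side triangle identity requires precisely the copairing-of-inclusions computation. I would therefore reverse the roles in your write-up: make the explicit verification the proof, and (optionally) remark that it is equivalent to completing the triangle-identity check left implicit in \cref{cor.factor_adj}.
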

\begin{proof}
The main idea here is that the counit selects out the Frobenius morphisms of
$\cospan$, and since $\cospan$ is free, these are \emph{all} the morphisms.

More precisely, observe that since left adjoints preserve coproducts, and
$\cospan[-]$ is a left adjoint (\cref{cor.factor_adj}), we now know $\cospan \cong
\coprod_{l \in \Lambda} \cospan[]$ in $\hyp$. Fix $l\in \Lambda$. The Frobenius
structure on the object $[l] \in \cospan$ is given by \cref{ex.cospan_scFrob},
and thus the corresponding map $\frob_l \colon \cospan[] \to \cospan$ given by
\cref{prop.cospan_as_scfm_theory} is precisely the inclusion into the $l$th
summand. Moreover, by the proof of \cref{thm.adjunction_cospan_ob},
$\frob_{\cospan}$ is built as the copairing over $\Lambda$ of these maps
$\frob_l$. Thus $\frob_{\cospan}=[\frob_l]_{l \in \Lambda} = \id_{\cospan}$, as
required.
\end{proof}

\begin{remark}
To summarize, consider the following diagram of categories and functors:
\begin{equation}\label{eqn.3_adjunctions_square}
\begin{tikzcd}[row sep=large, column sep=50pt]
	\smset\ar[r, shift left=5pt, "{\cospan[-]}"]\ar[d, shift left=5pt, "i"]
	\ar[d, phantom, "\scriptstyle \vdash"]\ar[r, phantom, "\scriptstyle \bot"]
&
	\hyp\ar[d, shift left=5pt, "\Str"]\ar[l, shift left=5pt, "\ob"]
\\
	\smset_\List\ar[r, shift left=5pt, "{\cospan[-]}"]\ar[u, shift left=5pt, "\List"]
	\ar[r, phantom, "\scriptstyle \bot"]
&
	\hyp_\of\ar[u, shift left=5pt, "U"]\ar[l, shift left=5pt, "\gens"]
\end{tikzcd}
\end{equation}
The left-hand adjunction is the usual one between $\smset$ and the Kleisli
category of the $\List$ monad. The top adjunction was proved in
\cref{thm.adjunction_cospan_ob}, while the bottom adjunction was proved in \cref{cor.factor_adj}. The right-hand map $U$ just sends an $\of$-hypergraph category $(\Lambda,\cat{H},i)$ to the underlying hypergraph category $\cat{H}$, and the strictification functor $\Str$ will be constructed in \cref{thm.stricthypergraphs}, namely as the underlying 1-functor of \cref{eqn.strictification_functor}. But beware that $U$ and $\Str$ are not adjoint: there are faux unit and counit maps $\cat{H}\to U\Str(\cat{H})\to\cat{H}$, but they do not satisfy the triangle identities. In some sense this right-hand part of the diagram is stronger than the rest---it is the shadow of the 2-equivalence $\hhyp_\of\cong\hhyp$ from \cref{thm.stricthypergraphs}---but in another sense it is weaker in that there is no adjunction between the underlying 1-categories.
\end{remark}

\section{Factoring hypergraph functors}\label{sec.io_ff}

Hypergraph functors naturally factor into two sorts: those that are identity-on-objects ($\io$) and those that are fully faithful ($\ff$)---roughly speaking, ``identity on morphisms''. Indeed, given a hypergraph functor $F\colon\cat{H}_1\to\cat{H}_2$,  define a new hypergraph category $\cat{H}_F$ as follows: its objects are the same as those of the domain, and for every two objects $x,y\in\ob(\cat{H}_F)$, the hom-set is that of the their images under $F$,
\begin{equation}\label{eqn.def_hyperfact}
	\ob(\cat{H}_F)\coloneqq\ob(\cat{H}_1)
	\qquad\text{and}\qquad
	\cat{H}_F(x,y)\coloneqq\cat{H}_2(Fx, Fy).
\end{equation}
The monoidal unit object and the monoidal product on objects in $\cat{H}$ are inherited from $\cat{H}_1$, and the monoidal product on morphisms together with all the Frobenius structures are inherited from $\cat{H}_2$. One easily constructs an identity-on-objects functor $\cat{H}_1\To{\io}\cat{H}_F$ and a fully faithful functor $\cat{H}_F\To{\ff}\cat{H}_2$, of which the composite is $F$,
\[
\begin{tikzcd}[row sep=0]
	\cat{H}_1\ar[rr, "F"]\ar[dr, bend right=10pt, "\io"']&&\cat{H}_2\\
	&\cat{H}_F\ar[ur, bend right=10pt, "\ff"']
\end{tikzcd}
\]
Of course, there are details to check, but we leave them to the reader.

\begin{remark}
The above forms an \emph{orthogonal factorization system} $(\io,\ \ff)$ on the 2-category $\hhyp$. See \cite{Spivak.Schultz.Rupel:2016a} for a definition and a similar result in the case of traced and compact closed categories. However, we will not need to use this fact, so we omit the proof.
\end{remark}

In fact the $(\ff,\io)$ factorization is special in that it leads to a fibration of categories, as we will show in \cref{prop.groth_fib_hypof}; it will help to first prove a lemma.

\begin{lemma}\label[lemma]{lemma.factoring_and_frob}
Let $g\colon\Lambda_1\to\Lambda_2$ be a morphism in $\smset_\List$, let
$\cat{H}_2$ be a hypergraph category such that $\gens(\cat{H}_2)=\Lambda_2$, and let
$\frob_2\colon\cospan[\Lambda_2]\to\cat{H}_2$ be the counit map on $\cat{H}_2$ of the
adjunction $\cospan[-] \dashv \gens$ from \cref{cor.factor_adj}. 

Consider the $(\io,\ff)$ factorization $\cospan\To{i_1}\cat{H}_1\To{G}\cat{H}_2$ of the composite $\cospan[g]\cp\frob_2$:
\begin{equation}\label{eqn.factor_functor}
\begin{tikzcd}[column sep=45pt]
	\cospan[\Lambda_1]\ar[r, "{\cospan[g]}"]\ar[d, "i_1\ (\io)"']&
	\cospan[\Lambda_2]\ar[d, "\frob_2"]\\
	\cat{H}_1\ar[r, "G\ (\ff)"']&\cat{H}_2
\end{tikzcd}
\end{equation}
Then we have $i_1=\frob_1$, the counit map on $\cat{H}_1$.
\end{lemma}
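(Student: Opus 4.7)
The plan is to apply the universal property of $\cospan[-]$ as left adjoint to $\gens$, established in \cref{cor.factor_adj}. First I would observe that because $i_1$ is identity-on-objects, we have $\ob(\cat{H}_1) = \ob(\cospan[\Lambda_1]) = \List(\Lambda_1)$, so $\cat{H}_1$ is canonically $\of(\Lambda_1)$ with $\gens(\cat{H}_1)=\Lambda_1$. In particular the counit $\frob_1\colon \cospan[\Lambda_1]\to\cat{H}_1$ is defined and the claim is well-posed.

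Next I would invoke the hom-set bijection from \cref{cor.factor_adj}:
\[
  \hyp_\of\bigl(\cospan[\Lambda_1],\cat{H}_1\bigr)\ \cong\ \smset_\List\bigl(\Lambda_1,\gens(\cat{H}_1)\bigr)\ =\ \smset_\List(\Lambda_1,\Lambda_1).
\]
Since the unit of this adjunction is the identity on $\Lambda_1$, the bijection sends a hypergraph functor $F$ to $\gens(F)$. Therefore it is enough to prove that $\gens(i_1)=\gens(\frob_1)$ as morphisms in $\smset_\List$, and in fact I will show both equal the identity $\id_{\Lambda_1}$.

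For $\frob_1$, this is immediate from the triangle identity of the adjunction, which forces $\gens(\frob_1)$ to be the identity (this is the same observation as in \cref{prop.cospan_frob_id}). For $i_1$, the key point is that being identity-on-objects means that for each $l\in\Lambda_1$ the object $i_1([l])\in\cat{H}_1$ equals $[l]$, regarded as an element of $\List(\Lambda_1)=\ob(\cat{H}_1)$; hence $\gens(i_1)\colon\Lambda_1\to\List(\Lambda_1)$ is the singleton-list map $\sing_{\Lambda_1}$, which is the identity in $\smset_\List$. Combining the two computations and the hom-set bijection yields $i_1=\frob_1$.

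The only real subtlety—and thus the main obstacle—is keeping the bookkeeping of identifications straight: the Kleisli structure on $\smset_\List$, the canonical isomorphism $\List(\Lambda_1)\cong\ob(\cat{H}_1)$ coming from the $\of$-structure, and the fact that the counit $\frob_1$ itself was constructed in \cref{cor.factor_adj} via exactly this universal property. Once these identifications are laid out carefully, the proof reduces to a single sentence invoking the uniqueness half of the adjoint correspondence.
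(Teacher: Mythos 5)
Your proof is correct and follows essentially the same route as the paper's: both reduce the claim to the hom-set bijection $\hyp_\of(\cospan[\Lambda_1],\cat{H}_1)\cong\smset_\List(\Lambda_1,\Lambda_1)$ from \cref{cor.factor_adj} and observe that $i_1$ and $\frob_1$ both correspond to $\id_{\Lambda_1}$ under it. Your version merely spells out the identifications (unit is the identity, $\io$ gives the singleton map) that the paper leaves implicit.
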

\begin{proof}
\cref{cor.factor_adj} gives the bijection $\hyp(\cospan[\Lambda_1],\cat{H_1})
\cong \smset_\List(\Lambda_1,\gens(\cat{H}_1))$. Note that $\Lambda_1 =
\gens(\cat{H}_1)$. The functor $i_1$ induces the identity map
$\Lambda_1\to\Lambda_1$ on generators and hence maps to the identity map on
$\Lambda_1$ under this bijection. Since $\frob_1$ does the same, the two
functors must be equal.
\end{proof}

\begin{proposition}\label[proposition]{prop.groth_fib_hypof}
The functor $\gens\colon\hyp_\of\to\smset_\List$ from \cref{cor.factor_adj} is a split Grothendieck fibration.
\end{proposition}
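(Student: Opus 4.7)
The plan is to build the cartesian lift of $g\colon\Lambda_1\to\Lambda_2$ at a given $\cat{H}_2\in\hyp_\of$ with $\gens(\cat{H}_2)=\Lambda_2$ by applying the $(\io,\ff)$-factorization of \cref{sec.io_ff} to the composite $\cospan[g]\cp\frob_2\colon\cospan[\Lambda_1]\to\cat{H}_2$. Call this factorization $\cospan[\Lambda_1]\To{i_1}\cat{H}_1\To{G}\cat{H}_2$, as in \cref{lemma.factoring_and_frob}. That lemma identifies $i_1$ with $\frob_1$, so $\cat{H}_1$ is automatically $\of$ with $\gens(\cat{H}_1)=\Lambda_1$, and we take $G\colon\cat{H}_1\to\cat{H}_2$ as the proposed lift. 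To see that $\gens(G)=g$, note that $G$ sends a generator $l\in\Lambda_1\subseteq\ob(\cat{H}_1)$ to the object of $\cat{H}_2$ corresponding to the list $g(l)\in\List(\Lambda_2)=\ob(\cat{H}_2)$, which is exactly the action of $g$ on generators when $\hyp_\of$ is viewed through $\gens$.

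Next I will verify the cartesian property. Suppose $H\colon\cat{H}_0\to\cat{H}_2$ is a hypergraph functor and $g'\colon\gens(\cat{H}_0)\to\Lambda_1$ is a morphism in $\smset_\List$ with $\gens(H)=g\cp g'$; we must produce a unique $H'\colon\cat{H}_0\to\cat{H}_1$ with $H'\cp G=H$ and $\gens(H')=g'$. On generators, $H'$ is forced to equal $g'$, which determines $H'$ on all objects of $\cat{H}_0$ because $\cat{H}_0$ is $\of$ and $H'$ must be strict symmetric monoidal; one checks from $\gens(H)=g\cp g'$ that $H'$ and $H$ agree on objects after composing with $G$. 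Since $G$ is fully faithful, the assignment on morphisms is then determined and unique: for each $f\colon x\to y$ in $\cat{H}_0$, set $H'(f)$ to be the unique morphism $H'(x)\to H'(y)$ in $\cat{H}_1$ mapped by $G$ to $H(f)$. Functoriality, monoidal-ness, and preservation of the Frobenius structure of $H'$ all transfer through $G$ because $G$ is a fully faithful hypergraph functor.

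Finally I will check that this choice of lifts is strictly split. For identities, when $g=\id_\Lambda$ the composite $\cospan[g]\cp\frob_2=\frob_2$ is already $\io$ by \cref{cor.factor_adj}, so its $(\io,\ff)$-factorization is $\frob_2$ followed by $\id_{\cat{H}_2}$, giving lift $\id_{\cat{H}_2}$. For composition, given $g_1\colon\Lambda_0\to\Lambda_1$ and $g_2\colon\Lambda_1\to\Lambda_2$ with corresponding lifts $G_1\colon\cat{H}_0\to\cat{H}_1$ and $G_2\colon\cat{H}_1\to\cat{H}_2$, we have $\cospan[g_2\cp g_1]\cp\frob_2=\cospan[g_1]\cp\cospan[g_2]\cp\frob_2=\cospan[g_1]\cp\frob_1\cp G_2=\frob_0\cp G_1\cp G_2$. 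Since $\frob_0$ is $\io$ and the composite $G_1\cp G_2$ of two fully faithful functors is fully faithful, this is already an $(\io,\ff)$-factorization; because the factorization defined by \cref{eqn.def_hyperfact} is strictly unique (objects and hom-sets are fixed by the data), the lift of $g_2\cp g_1$ is exactly $G_1\cp G_2$.

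The main obstacle I anticipate is bookkeeping in the cartesian property: one must verify that the morphism-level construction $H'(f):=G^{-1}(H(f))$ is actually well-defined as a symmetric monoidal hypergraph functor, and that it is the unique such filler. Both reduce to fully-faithfulness of $G$ and the fact that the objects of $\cat{H}_0$ and $\cat{H}_1$ are free monoids on their generating sets, so I expect no substantive difficulty beyond careful unwinding of definitions.
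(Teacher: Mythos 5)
Your proposal is correct and follows essentially the same route as the paper: the cartesian lift is the $(\io,\ff)$-factorization of $\cospan[g]\cp\frob_2$ (with \cref{lemma.factoring_and_frob} identifying its $\io$ part with $\frob_1$), cartesianness is verified by fixing the filler on objects from the generator-level data and on morphisms via full faithfulness of $G$, and splitness follows from the on-the-nose uniqueness of the factorization of \cref{eqn.def_hyperfact}, which you spell out in more detail than the paper does. One small caveat: your justification that $H'$ ``must be strict symmetric monoidal'' is not literally available, since morphisms in $\hyp_\of$ are strong hypergraph functors; the paper instead pins down the object assignment by recasting the lifting problem as a commuting square over the identity-on-objects counit maps $\frob$, which forces the filler to agree with $\cospan[f]$ on objects.
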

\begin{proof}
We first want to show $\gens$ is a fibration, so suppose given a diagram
\[
\begin{tikzcd}
	&\cat{H}_2\ar[d, maps to, "\gens"]
	\\
	\Lambda_1\ar[r,"g"']&\Lambda_2
\end{tikzcd}
\]
We want to find a cartesian morphism $G$ over $g\in\smset_\List$. Since
$\gens(\cat{H}_2)=\Lambda_2$, we can factor $\cospan[g]\cp\frob_2$ as in
\cref{lemma.factoring_and_frob} to obtain the commutative square
\cref{eqn.factor_functor}. We claim that the map $G\colon \cat{H}_1\to\cat{H}_2$ is cartesian. So suppose given a solid-arrow diagram as to the
left below; it is equivalently described by the solid-arrow diagram to the
right:
\[
\begin{tikzcd}[column sep=small]
	\cat{H}_0\ar[rr, bend left, "H"]\ar[r, dashed]\ar[d, maps to]&
	\cat{H}_1\ar[r, "G"']\ar[d, maps to]&
	\cat{H}_2\ar[d, maps to]\\
	\Lambda_0\ar[r, "f"']&
	\Lambda_1\ar[r, "g"']&
	\Lambda_2
\end{tikzcd}
\hspace{.8in}
\begin{tikzcd}[column sep=large]
	\cospan[\Lambda_0]\ar[r, "{\cospan[f]}"]\ar[d, "(\io)"]&
	\cospan[\Lambda_1]\ar[r, "{\cospan[g]}"]\ar[d, "(\io)"]&
	\cospan[\Lambda_2]\ar[d, "(\io)"]\\
	\cat{H}_0\ar[rr, bend right=15pt, "H"']\ar[r, dashed, "F"]&
	\cat{H}_1\ar[r, "{G\;(\ff)}"]&
	\cat{H}_2
\end{tikzcd}
\]
We need to show there is a unique dashed map $F\colon\cat{H}_0\to\cat{H}_1$ making the bottom triangle on the right-hand diagram commute. But because the vertical maps are $\io$, we take $F$ on objects to agree with $\cospan[f]$, and because $G$ is fully faithful, we take $F$ on morphisms to agree with $H$. This is the only possible choice to make the diagrams commute, and it will be a hypergraph functor because $G$ and $H$ are.

We have proved that $\gens$ is a Grothendieck fibration. It is split, meaning that our choices of Cartesian maps are closed under composition, because \cref{eqn.def_hyperfact} defines the factorization system up to equality.
\end{proof}

In general, split Grothendieck fibrations $p\colon E\to B$ can be identified
with functors $\corners{p}\colon B\to\smcat\op$. In the case of
\cref{prop.groth_fib_hypof}, the functor
$\corners{\gens}\colon\smset_\List\to\smcat\op$ shall be denoted
$\hyp_{\of(-)}$. It sends an object $\Lambda$ to the category
$\hyp_{\of(\Lambda)}$ of hypergraph categories that are objectwise-free on
$\Lambda$ and the $\io$ hypergraph functors between them. It sends a morphism
$f\colon\Lambda_1\to\List(\Lambda_2)$ to the functor
$\hyp_{\of(f)}\colon\hyp_{\of(\Lambda_2)}\to\hyp_{\of(\Lambda_1)}$ defined by
factorization as in \cref{eqn.factor_functor}; in other words, on objects
$\hyp_{\of(f)}$ maps $\cat{H}_2$ to the hypergraph category $\cat{H}_1$ given by
the factorization
\begin{equation}\label{eqn.factor_functor2}
\begin{tikzcd}[column sep=45pt]
	\cospan[\Lambda_1]\ar[r, "{\cospan[f]}"]\ar[d, "\frob_1"']\ar[d, "(\io)"]&
	\cospan[\Lambda_2]\ar[d, "\frob_2"]\\
	\hyp_{\of(f)}(\cat{H}_2) \coloneqq \cat{H}_1\ar[r, "F\;(\ff)"']&\cat{H}_2
\end{tikzcd}
\end{equation}

The opposite direction, taking a functor $B\to\smcat\op$ and returning a
fibration over $B$ is called the Grothendieck construction. We immediately have
the following.

\begin{corollary}\label[corollary]{cor.hypof_fibration}
There is an equivalence of categories
\[
  \hyp_\of
  \quad\To{\cong}
  \int^{\Lambda\in\smset_\List}\hyp_{\of(\Lambda)}.
\]
\end{corollary}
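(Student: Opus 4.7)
The plan is to invoke the standard correspondence between split Grothendieck fibrations over a base $B$ and pseudofunctors (here strict functors) $B \to \smcat^\op$, and to identify the data in question with this correspondence. Concretely, \cref{prop.groth_fib_hypof} already furnishes $\gens\colon\hyp_\of\to\smset_\List$ as a split Grothendieck fibration, and the discussion immediately following that proposition names the associated functor $\hyp_{\of(-)}\colon\smset_\List\to\smcat^\op$ and describes its action on morphisms via the factorization in \cref{eqn.factor_functor2}. The desired statement is then just the assertion that the Grothendieck construction applied to $\hyp_{\of(-)}$ recovers $\hyp_\of$ (up to isomorphism/equivalence) over $\smset_\List$.

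First I would define the candidate comparison functor $\Phi\colon\hyp_\of\to\int^{\Lambda\in\smset_\List}\hyp_{\of(\Lambda)}$ on objects by sending an objectwise-free hypergraph category $\cat{H}$ to the pair $(\gens(\cat{H}),\cat{H})$, which makes sense since $\cat{H}\in\hyp_{\of(\gens\cat{H})}$. On morphisms, given a hypergraph functor $F\colon\cat{H}_1\to\cat{H}_2$, set $f\coloneqq\gens(F)\colon\Lambda_1\to\Lambda_2$ and take the $(\io,\ff)$-factorization of $F$ through $\hyp_{\of(f)}(\cat{H}_2)$, producing a vertical $\io$-morphism $g\colon\cat{H}_1\to\hyp_{\of(f)}(\cat{H}_2)$ in the fiber $\hyp_{\of(\Lambda_1)}$. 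Then $\Phi(F)\coloneqq(f,g)$ is a morphism in the Grothendieck construction by definition.

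Next I would check functoriality of $\Phi$. Preservation of identities is immediate since the $(\io,\ff)$-factorization of an identity is an identity on each side. For composition, given $\cat{H}_1\To{F}\cat{H}_2\To{F'}\cat{H}_3$, one uses that the right-hand factor of the $(\io,\ff)$-factorization is cartesian for $\gens$ (this is precisely how cartesian lifts were constructed in \cref{prop.groth_fib_hypof}), so the vertical part of $F\cp F'$ agrees with the composite $g\cp\hyp_{\of(f)}(g')$ dictated by the Grothendieck construction; this is exactly the splitness that was recorded at the end of the proof of \cref{prop.groth_fib_hypof}.

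Finally I would exhibit the inverse $\Psi$: send $(\Lambda,\cat{H})$ to $\cat{H}$ (forgetting the label, which is determined by $\gens\cat{H}=\Lambda$ anyway), and a morphism $(f,g)\colon(\Lambda_1,\cat{H}_1)\to(\Lambda_2,\cat{H}_2)$ to the composite $\cat{H}_1\To{g}\hyp_{\of(f)}(\cat{H}_2)\To{(\ff)}\cat{H}_2$, where the second map is the cartesian lift from \cref{eqn.factor_functor2}. That $\Phi\cp\Psi=\id$ and $\Psi\cp\Phi=\id$ is then read off directly from the uniqueness part of the $(\io,\ff)$-factorization: every hypergraph functor between $\of$-hypergraph categories decomposes uniquely as an $\io$-morphism followed by a cartesian lift. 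I expect no real obstacle; the only subtlety worth checking carefully is that the vertical/cartesian decomposition of a composite agrees with the Grothendieck-construction composition law, and this is exactly the splitness already established.
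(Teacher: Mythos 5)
Your proposal is correct and follows the same route as the paper: \cref{cor.hypof_fibration} is obtained there by applying the standard correspondence between split Grothendieck fibrations and functors into $\smcat\op$ to the fibration $\gens\colon\hyp_\of\to\smset_\List$ established in \cref{prop.groth_fib_hypof}, with the associated functor $\hyp_{\of(-)}$ defined via the factorization \cref{eqn.factor_functor2}. You merely unfold that correspondence explicitly (comparison functor, splitness for functoriality, uniqueness of the $(\io,\ff)$-factorization for invertibility), which is exactly what the paper's ``we immediately have'' compresses.
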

%

\section{Strictification of hypergraph categories}\label{sec.strictification}

In this subsection we prove that every hypergraph category is hypergraph equivalent to a strict hypergraph category. In fact, there is a 2-equivalence $\hhyp\cong\hhyp_\of$. This coherence result will be the first step in formalizing the relationship between hypergraph categories and cospan-algebras.

Define the 2-category $\hhyp_\of$ as the full sub-2-category of objectwise-free
hypergraph categories. That is, the objects of $\hhyp_\of$ are hypergraph
categories $\cat{H}$ such that there exists a set $\Lambda$ and a bijection
$i\colon\List(\Lambda)\to\ob(\cat{H})$, and given two $\of$-hypergraph
categories $(\cat{H},\Lambda,i)$ and $(\cat{H}',\Lambda',i')$, the
hom-category between them is simply
\[\hhyp_\of\big((\cat{H},\Lambda,i),(\cat{H}',\Lambda',i')\big)\coloneqq\hhyp(\cat{H},\cat{H}').\]
There is an
obvious forgetful functor $U\colon\hhyp_\of\to\hhyp$, and by construction it is
fully faithful.

\begin{theorem}\label[theorem]{thm.stricthypergraphs}
The functor $U\colon\hhyp_\of\to\hhyp$ is a 2-equivalence. In particular it is essentially surjective, i.e.\ every hypergraph category is hypergraph equivalent to an objectwise-free hypergraph
  category.
\end{theorem}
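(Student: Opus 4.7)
The 2-functor $U$ is fully faithful on hom-categories by construction, since $\hhyp_\of$ is defined as a sub-2-category whose hom-categories are inherited from $\hhyp$. Hence it suffices to show $U$ is essentially surjective: for every hypergraph category $\cat{H}$, I need to construct an objectwise-free hypergraph category $\Str(\cat{H})$ together with a hypergraph equivalence $\cat{H}\simeq \Str(\cat{H})$ in $\hhyp$.

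The construction is a variant of Mac~Lane strictification tailored to the hypergraph setting. Set $\Lambda\coloneqq\ob(\cat{H})$ and let $\Str(\cat{H})$ have objects $\List(\Lambda)$ with list concatenation as the strict monoidal product. For each list $x=[x_1,\ldots,x_m]$, write $\bar x\coloneqq x_1\otimes\cdots\otimes x_m\in\cat{H}$ for the iterated tensor with a fixed left-bracketing convention, and $\bar\emptylist\coloneqq I$. Define
\[
  \Str(\cat{H})(x,y)\;\coloneqq\;\cat{H}(\bar x,\bar y),
\]
with composition inherited from $\cat{H}$, and with the monoidal product of morphisms $f\colon x\to y$ and $g\colon u\to v$ given by conjugating $f\otimes g\colon\bar x\otimes\bar u\to\bar y\otimes\bar v$ by the canonical rebracketing isomorphisms $\overline{x\concat u}\cong\bar x\otimes\bar u$ built from associators and unitors in $\cat{H}$. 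The coherence theorem for symmetric monoidal categories guarantees that $\Str(\cat{H})$ is a strict symmetric monoidal category, and clearly it is objectwise-free on $\Lambda$.

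To give $\Str(\cat{H})$ a hypergraph structure, I apply \cref{lemma.gen_hyp_struc}: for each generator $[l]\in\Lambda$ I assign the Frobenius structure already present on $l\in\cat{H}$, which extends uniquely to a hypergraph structure on $\Str(\cat{H})$. The strong symmetric monoidal functor $E\colon\Str(\cat{H})\to\cat{H}$ sending $x\mapsto\bar x$ and acting as the identity on morphisms, equipped with laxators built from the same rebracketing isomorphisms, is essentially surjective and fully faithful, hence an equivalence of symmetric monoidal categories. By the second part of \cref{lemma.gen_hyp_struc}, $E$ is a hypergraph functor because it preserves the Frobenius structure on each generator by construction. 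A quasi-inverse to $E$ is obtained by choosing, for each object $z\in\cat{H}$, a single-element list $[z]$ and the identity $\bar{[z]}=z\to z$; this assembles into a hypergraph functor $\cat{H}\to\Str(\cat{H})$ whose composites with $E$ are monoidally naturally isomorphic to identities, yielding the required equivalence in $\hhyp$.

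The main subtlety—and the place where the unit coherence axiom $\eta_I=\id_I=\epsilon_I$ is essential—lies in the case of the empty list. In $\Str(\cat{H})$ the monoidal unit is $\emptylist$, a strictly idempotent object, so by the argument of \cref{prop.unique_Frob_on_I} its Frobenius structure in $\Str(\cat{H})$ is forced to be $(\id_I,\id_I,\id_I,\id_I)$. For $E$ to be a hypergraph functor, this must match the canonical Frobenius structure on $I\in\cat{H}$ as dictated by \cref{eqn.hyp_functor}; unpacking that formula using $\bar\emptylist=I$, the requirement collapses to exactly $\eta_I=\id_I=\epsilon_I$. Thus the unit coherence axiom—absent from some earlier definitions—is precisely what makes the strictification go through. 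The remaining verifications, namely that the construction $\cat{H}\mapsto\Str(\cat{H})$ extends to a pseudo-2-functor providing a weak inverse to $U$ on the level of 1- and 2-cells, are routine bookkeeping with associators and laxators, with no further conceptual obstacle.
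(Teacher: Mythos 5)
Your proposal is correct and follows essentially the same route as the paper: reduce to essential surjectivity since $U$ is fully faithful by construction, perform Mac~Lane strictification on objects $\List(\ob\cat{H})$, transport the Frobenius structures of $\cat{H}$ to the generators (your use of \cref{lemma.gen_hyp_struc} is just a packaging of the paper's direct definition of the Frobenius structure on lists), verify that the comparison functors are hypergraph, and isolate the unit coherence axiom $\eta_I=\id_I=\epsilon_I$ as exactly what is needed for the empty list / monoidal unit case. The remaining assertions you leave as routine (that the quasi-inverse $\cat{H}\to\Str(\cat{H})$ is itself a hypergraph functor and that the construction is 2-functorial) are treated at the same level of detail in the paper's proof.
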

\begin{proof}
  Since $U$ is fully faithful by definition, it suffices to show that it is essentially surjective.
  
  Let $(\cat{H},\ot)$ be a hypergraph category. As $\cat{H}$ is, in particular, a symmetric monoidal category, a standard
  construction (see Mac Lane \cite[Theorem XI.3.1]{MacLane:1998a}) gives an equivalent
  strict symmetric monoidal category $\cat{H}_{\str}$, the \emph{strictification of $\cat{H}$}, whose construction we detail here.
  
  Let $\Lambda\coloneqq\ob(\cat{H})$. The set of objects in the strictification
  is $\ob(\cat{H}_\str)\coloneqq\List(\Lambda)$, i.e., finite lists
  $[x_1,\ldots,x_m]$ of objects in $\cat{H}$. For each such list, let
  $Px\coloneqq(((x_1 \ot x_2) \ot \dots) \ot x_m) \ot I$ denote the
  ``pre-parenthesized product of $x$'' in $\cat{H}$ with all open parentheses at
  the front. Note that $P$ applied to the empty list is the monoidal unit $I$,
  and that for any pair of lists $x,y$ there is a canonical isomorphism
  $P(\copair{x,y})\cong\copair{Px,Py}$,
  \begin{multline}\label{eqn.pre_paren_iso}
  (((((x_1\otimes x_2)\otimes\cdots\otimes x_m)\otimes y_1)\otimes y_2)\otimes\cdots\otimes y_n)\otimes I\\\cong
  (((x_1\otimes x_2)\otimes\cdots\otimes x_m)\otimes I)\otimes (((y_1\otimes y_2)\otimes\cdots\otimes y_n)\otimes I).
  \end{multline}
  The morphisms $[x_1,\ldots,x_m] \to [y_1,\ldots,y_n]$ in $\cat{H}_\str$ are the morphisms $Px\to Py$ in $\cat{H}$, and composition is inherited from $\cat{H}$. The monoidal structure on objects in $\cat{H}_{\str}$ is given by
  concatenation of lists; the monoidal unit is the empty list. The monoidal product of two morphisms in $\cat{H}_\str$ is
  given by their monoidal product in $\cat{H}$ pre- (and post-) composed with the
  canonical isomorphism (and its inverse) from \cref{eqn.pre_paren_iso}.
  
  By design, the associators and unitors of $\cat{H}_{\str}$ are simply
  identity maps, and the braiding $\copair{x,y}\to\copair{y,x}$ is given by the braiding
  $Px \ot Py \to Py \ot Px$ in $\cat{H}$, similarly pre- and post-composed with
  the isomorphisms from \cref{eqn.pre_paren_iso}. This defines a strict symmetric monoidal
  category \cite{MacLane:1998a}, and it is objectwise-free on $\Lambda$ by construction. This construction is 2-functorial: given a strong monoidal functor between monoidal categories (resp.\ a monoidal natural transformation between monoidal functors), there is an evident strict monoidal functor (resp.\ a monoidal natural transformation) between strictifications.

  To make $\cat{H}_{\str}$ into a hypergraph category, we equip each
  object $x=[x_1,\ldots,x_n]$ with an Frobenius structure
  $(\mu,\eta,\delta,\epsilon)$ using the monoidal product, over $i=1,\ldots,n$, of corresponding Frobenius structures $(\mu_i,\eta_i,\delta_i,\epsilon_i)$ from $\cat{H}$, and pre- or post-composition with canonical isomorphisms from \cref{eqn.pre_paren_iso}. For example, the multiplication $\mu$ on $x\in\ob(\cat{H}_\str)$ 
  is given by 
  \[P(\copair{x,x})\cong((((x_1 \ot x_1) \ot (x_2 \ot x_2)) \ot \dots) \ot (x_n \ot x_n)) \ot I\To{((\mu_1\otimes\mu_2)\otimes\cdots\otimes\mu_n)\otimes\id_I}P(x).
  \]
  As the coherence maps are natural, each special
  commutative Frobenius monoid axiom for this data on $[x_1,\ldots,x_n]$ reduces
  to a list of the corresponding axioms for the objects $x_i$ in $\cat{H}$.
  Similarly, the coherence axioms and naturality of the coherence maps imply the
  Frobenius structure on the monoidal product of objects is given by the
  Frobenius structures on the factors in the required way.
  
  Thus we have upgraded $\cat{H}_\str$ to a hypergraph category. Moreover, this construction is 2-functorial; all that needs to be checked is that the usual strictification of a hypergraph functor $\cat{H}\to\cat{H}'$ preserves the hypergraph structure on $\cat{H}_\str$ and $\cat{H}'_\str$ as defined above, which is easy to see.
  Thus we have a 2-functor
  \begin{equation}\label{eqn.strictification_functor}
  	\Str\colon\hhyp\to\hhyp_\of,
	\end{equation}
  and it remains to prove the equivalence of $\cat{H}$ and $\cat{H}_\str$.

  Mac Lane's standard construction further gives strong symmetric monoidal
  functors $P\colon \cat{H}_{\str} \to \cat{H}$, extending the map $P$ above,
  and $S\colon \cat{H} \to \cat{H}_{\str}$ sending $x \in \cat{H}$ to the
  length-1 list $[x]\in\cat{H}_{\str}$, and $P$ and $S$ form an equivalence of
  symmetric monoidal categories. 
  
  Moreover, it is straightforward to check that $P$ and $S$ preserve the
  hypergraph structure defined above, and thus form an equivalence of hypergraph
  categories. The fact that $P$ preserves the hypergraph structure follows from
  the compatibility of the Frobeinus structures with the monoidal product
  required in the definition of hypergraph category.
  
  Note in particular that $\cat{H}$ must obey the unit coherence axiom (see \cref{def.hypergraph_cat_func}) in order for the
  Frobenius structure on the monoidal unit $\varnothing$ of $\cat{H}_{\str}$ to
  map to the Frobenius structure on its image $P(\varnothing) = I$ of $\cat{H}$.
  By construction, the Frobenius structure on $\varnothing$ just comprises
  identity maps $\varnothing \to \varnothing$; indeed, since $\cat{H}_\str$ is
  strict, \cref{prop.unique_Frob_on_I} shows this is the only Frobenius
  structure it could have. The image of this Frobenius structure under $P$ is
  then defined by $P$'s monoidal coherence maps, and these
  coherence maps define precisely the canonical Frobenius structure on $I$
  detailed in \cref{ex.scFrob_on_I}. 
\end{proof}

\chapter{Cospan-algebras and hypergraph categories are equivalent}\label{sec.hyp_cospan_alg}
In \cref{thm.stricthypergraphs} we showed that there is a 2-equivalence between the bicategories $\hhyp$ and
$\hhyp_\of$. Our remaining goal is to show there is an equivalence between the
(1-) categories $\hyp_\of$ and $\calg$. We will build this equivalence in parts. 

In \cref{ssec.hcca} we produce a functor $\hcca\colon\hyp_{\of(\Lambda)}\to\lax(\cospan,\smset)$ natural in $\Lambda$, and in \cref{ssec.cahc} we produce a functor $\cahc$ in the opposite direction. In \cref{ssec.equivalences} we prove that $\hcca$ and $\cahc$ are mutually inverse, giving an equivalence of categories
\begin{equation}\label{eqn.equiv_hyp_calg_Lambda}
\hyp_{\of(\Lambda)} \cong \lax(\cospan,\smset).
\end{equation}
These equivalences will again be natural in $\Lambda\in\smset_\List$, so we will be able to gather them together into a single equivalence, $\hyp_{\of} \cong \calg$.

\section{From hypergraph categories to cospan-algebras }
\label{ssec.hcca}

Our aim in this subsection is to provide one half of the equivalence
\eqref{eqn.equiv_hyp_calg_Lambda}, converting any hypergraph category $\cat{H}$ into a cospan-algebra $A_{\cat{H}}$. This is given by the following. 

\begin{proposition} \label[proposition]{prop.hyptocalg}
For any $\Lambda\in\smset_\List$, we can naturally construct a functor
\[
\hcca\colon \hyp_{\of(\Lambda)} \to \lax(\cospan,\smset).
\]
\end{proposition}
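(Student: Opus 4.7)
The plan is to define $\hcca$ by letting the underlying set of the cospan-algebra record the ``names'' of morphisms in $\cat{H}$, as suggested by \cref{prop.gather_and_parse} and the discussion following it. Concretely, for an $\of(\Lambda)$ hypergraph category $\cat{H}$, I would define
\[
A_\cat{H}(x) \coloneqq \cat{H}(\emptylist, x)
\]
on objects $x \in \ob(\cospan) = \List(\Lambda) = \ob(\cat{H})$, the identification of objects coming from the $\of$ structure. For a cospan $c\colon x\to y$, I would use the counit $\frob_\cat{H}\colon \cospan[\Lambda]\to\cat{H}$ of the adjunction $\cospan[-]\dashv\gens$ from \cref{cor.factor_adj}, which exists and is identity-on-objects because $\gens(\cat{H})=\Lambda$. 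Set $A_\cat{H}(c)$ to be post-composition with the morphism $\frob_\cat{H}(c)\colon x\to y$ in $\cat{H}$. Functoriality is then immediate from the functoriality of $\frob_\cat{H}$ and associativity of composition in $\cat{H}$.

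Next, the lax symmetric monoidal structure on $A_\cat{H}$ is supplied by the monoidal product of $\cat{H}$: the binary laxator $A_\cat{H}(x)\times A_\cat{H}(y)\to A_\cat{H}(x\concat y)$ sends $(f,g)$ to $f\otimes g$, using that $\emptylist\otimes\emptylist=\emptylist$ in the strict $\of$ setting, while the unit laxator picks out $\id_\emptylist\in A_\cat{H}(\emptylist)$. The lax monoidal coherence axioms and naturality in $\cospan$ reduce to the symmetric monoidal coherence of $\cat{H}$ together with the fact that $\frob_\cat{H}$, being a hypergraph functor, preserves the monoidal product and the cospan-theoretic Frobenius generators (\cref{prop.cospan_as_scfm_theory,lemma.gen_hyp_struc}). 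I expect verifying naturality of the laxators against a general cospan will be the fiddliest point, but it amounts to checking that $\frob_\cat{H}$ commutes with the Frobenius generators appearing in the factorization of an arbitrary cospan from \cref{lem.cospan_generators}, which follows since $\frob_\cat{H}$ is hypergraph.

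On morphisms of $\hyp_{\of(\Lambda)}$, given an $\io$ hypergraph functor $F\colon\cat{H}\to\cat{H}'$, define the monoidal natural transformation $\hcca(F)\colon A_\cat{H}\to A_{\cat{H}'}$ componentwise by
\[
\hcca(F)_x\colon \cat{H}(\emptylist,x)\longrightarrow \cat{H}'(\emptylist,x),\qquad f\longmapsto \varphi_I\cp F(f),
\]
where $\varphi_I\colon\emptylist\to F(\emptylist)=\emptylist$ is the unit laxator of $F$. Naturality in cospans requires $F\cp\frob_{\cat{H}'} = \frob_\cat{H}$ on morphisms, which holds because both sides are hypergraph functors $\cospan[\Lambda]\to\cat{H}'$ inducing the identity on generators, and so agree by the freeness in \cref{thm.adjunction_cospan_ob} and \cref{cor.factor_adj}. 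The monoidality of $\hcca(F)$ follows from the fact that $F$ is strong monoidal with coherence maps compatible with the Frobenius structure as in \eqref{eqn.hyp_functor}.

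The main obstacle is the careful bookkeeping around the non-strict coherence data $\varphi_I,\varphi_{x,y}$ of hypergraph functors, even in the $\io$ case. The unit coherence axiom (\cref{def.hypergraph_cat_func}), together with \cref{prop.unique_Frob_on_I} and the hypergraph preservation condition \eqref{eqn.hyp_functor}, should be exactly strong enough to force the relevant diagrams to commute. Finally, for naturality in $\Lambda\in\smset_\List$: given $f\colon\Lambda\to\Lambda'$ in $\smset_\List$ and an $\of(\Lambda')$ hypergraph category $\cat{H}'$, the restriction $\hyp_{\of(f)}(\cat{H}')$ is defined by the factorization \eqref{eqn.factor_functor2}, whose $\io$-part is the counit $\frob$ by \cref{lemma.factoring_and_frob}. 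Unwinding, $A_{\hyp_{\of(f)}(\cat{H}')} = A_{\cat{H}'}\cp\cospan[f]$ on the nose, which gives the required naturality square and allows the family $\{\hcca\}_{\Lambda}$ to be packaged as a single functor on Grothendieck constructions, as will be needed in \cref{ssec.equivalences}.
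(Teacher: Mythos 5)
Your proposal is correct and takes essentially the same route as the paper: define $\hcca[\cat{H}]=\cat{H}(I,\frob(-))$ with unit laxator $\id_I$ and binary laxator given by $\otimes$, define $\hcca[F]$ by applying $F$ (justified by the fact that $\frob_{\cat{H}}\cp F$ and $\frob_{\cat{H}'}$ agree as hypergraph functors out of the free hypergraph category), and obtain naturality in $\Lambda$ from the factorization in \cref{eqn.factor_functor2} via \cref{lemma.factoring_and_frob}. The only slip is notational: your displayed identity should read $\frob_{\cat{H}}\cp F=\frob_{\cat{H}'}$ (as written, $F\cp\frob_{\cat{H}'}$ does not typecheck), but your surrounding justification makes the intended statement clear.
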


This will be proved on page \pageref{proof.hyptocalg}. First we prove two lemmas, which we use to define $\hcca$ on objects and on
morphisms of $\hyp_{\of(\Lambda)}$ respectively.

\begin{lemma}\label{lem.hcca_on_obj}
  Let $\cat{H}$ be an $\of$ hypergraph category with $\Lambda=\gens(\cat{H})$;
  by \cref{cor.factor_adj} we have an identity-on-objects hypergraph functor
  $\frob\colon\cospan\to\cat{H}.$ The set of maps out of the monoidal unit
  $I\in\cat{H}$ defines a lax symmetric monoidal functor
\begin{align}
	\nonumber
	\hcca[\cat{H}]\colon\cospan&\to\smset\\
	\label{eqn.Psi_on_obj}
	\hcca[\cat{H}](-)&\coloneqq\cat{H}\big(I,\frob(-)\big)
\end{align}
\end{lemma}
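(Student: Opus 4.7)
The plan is to observe that $\hcca[\cat{H}]$ is defined as the composite of two functors, $\cospan[\Lambda] \xrightarrow{\frob} \cat{H} \xrightarrow{\cat{H}(I,-)} \smset$, each of which carries an appropriate symmetric monoidal structure. Since the composite of a strong symmetric monoidal functor with a lax symmetric monoidal functor is itself lax symmetric monoidal, this will suffice. The functor $\frob$ is strong symmetric monoidal because, by \cref{cor.factor_adj}, it is a hypergraph functor, and hypergraph functors are in particular strong symmetric monoidal (\cref{def.hypergraph_cat_func}).

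The main work, such as it is, is to endow the representable $\cat{H}(I,-)\colon\cat{H}\to\smset$ with the structure of a lax symmetric monoidal functor. I would define the unit morphism $\{*\}\to\cat{H}(I,I)$ to pick out $\id_I$, and the laxator $\cat{H}(I,X)\times\cat{H}(I,Y)\to\cat{H}(I,X\otimes Y)$ to send a pair $(f,g)$ to the composite
\[
I \xrightarrow{\rho_I^{-1}} I\otimes I \xrightarrow{f\otimes g} X\otimes Y.
\]
Naturality of the laxator in $X$ and $Y$ is immediate from the bifunctoriality of $\otimes$. The unitality, associativity, and symmetry coherence axioms for the laxator then follow from the corresponding coherence axioms in $(\cat{H},\otimes,I)$ together with standard manipulations of the unitors, associator, and braiding. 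This is a classical observation: on any symmetric monoidal category, the representable at the monoidal unit is canonically lax symmetric monoidal.

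Explicitly, then, $\hcca[\cat{H}]$ carries the lax structure whose unit picks out $\frob(\id_\emptylist)\cp\varphi_\emptylist^{-1}\cp\id_I=\id_I\in\cat{H}(I,\frob(\emptylist))$ (using that the strong monoidal coherence $\varphi_\emptylist\colon I\to\frob(\emptylist)$ is invertible, and that $\frob(\emptylist)=I$ when $\cat{H}$ is strict and $\frob$ identity-on-objects), and whose laxator on a pair $(f,g)\in\cat{H}(I,\frob(x))\times\cat{H}(I,\frob(y))$ is
\[
I \xrightarrow{\cong} I\otimes I \xrightarrow{f\otimes g} \frob(x)\otimes \frob(y) \xrightarrow{\varphi_{x,y}} \frob(x\concat y).
\]
That $\hcca[\cat{H}]$ is naturally constructed in $\Lambda$ will be checked once $\hcca$ is defined on morphisms in the next lemma; here the only content is that the constructions of the laxator and unit depend only on the symmetric monoidal data of $\cat{H}$ and the monoidal coherence data of $\frob$, not on any further choices. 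I do not anticipate a real obstacle: this lemma is essentially the statement that the underlying functor $\hcca[\cat{H}]$ of any representable-at-the-unit, postcomposed with a strong monoidal functor, is lax monoidal, packaged with the specific identification $\Lambda=\gens(\cat{H})$.
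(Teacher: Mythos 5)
Your proposal is correct and matches the paper's argument in substance: the paper equips $\hcca[\cat{H}]=\frob\cp\cat{H}(I,-)$ with the unit $\{1\}\To{\id_I}\cat{H}(I,I)$ and laxator $\cat{H}(I,\frob X)\times\cat{H}(I,\frob Y)\To{\otimes}\cat{H}(I,\frob X\otimes\frob Y)$, using that $\frob$ is $\io$ and $\cat{H}$ is strict, which is exactly your canonical lax structure on the unit-representable composed with the strong monoidal structure of $\frob$ (the unitor being an identity in the strict case). Both treatments leave the coherence verifications as routine, so there is nothing missing.
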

\begin{proof}
The formula \eqref{eqn.Psi_on_obj} makes sense not only for objects in $\cospan$ but also for morphisms, and it makes clear how to endow $A_\cat{H}$ with  a lax structure. Indeed, given a morphism $f\colon X\to Y$ in $\cospan$, composing with $\frob(f)$ induces a function $\cat{H}(I,\frob(X))\to\cat{H}(I,\frob(Y))$, and this defines $A_\cat{H}$ on morphisms. For the laxators, we need a function $\gamma\colon\{1\}\to A_\cat{H}(0)$ and a function $\gamma_{X,Y}\colon A_\cat{H}(X)\times A_\cat{H}(Y)\to \hcca[\cat{H}](X\concat Y)$ for any $X,Y\in\cospan$. Since $\frob$ is $\io$ (\cref{cor.factor_adj}), we can define the functions $\gamma$ and $\gamma_{X,Y}$ as follows:
\begin{gather}\label{eqn.laxator_cospan_alg}
  \{1\}\To{\id_I}\cat{H}(I,I)= A_\cat{H}(\emptylist),\\\nonumber
  \cat{H}(I,\frob X)\times\cat{H}(I,\frob Y)\To{\otimes}\cat{H}(I,\frob X\otimes \frob Y)= \hcca[\cat{H}](X\concat Y).
\end{gather}
It is easy to check that these satisfy the necessary coherence conditions.
\end{proof}

We next want to define $\hcca$ on morphisms, so suppose that $\cat{H}$ and $\cat{H}'$ are hypergraph categories. Morphisms between them in $\hyp_{\of(\Lambda)}$ are $\io$ hypergraph functors $F\colon\cat{H}\to\cat{H}'$, and morphisms between their images in $\lax(\cospan,\smset)$ are natural transformations $\alpha\colon A_\cat{H}\to \hcca[\cat{H}']$
\[
\begin{tikzcd}[column sep=large]
	\cospan
		\ar[r, bend left=15pt, "{\hcca[\cat{H}]}"]
		\ar[r, bend right=15pt, "{\hcca[\cat{H}']}"']
		\ar[r, phantom, "\scriptstyle\Downarrow\alpha"]
	&
	\smset.{\color{white}Cos_\Lambda}
\end{tikzcd}
\]
So given $F$, in order to define $\alpha\coloneqq\hcca[F]$, we first note that $F\colon\cat{H}\to\cat{H}'$ induces a commutative diagram of $\io$ hypergraph functors
\begin{equation}\label{eqn.io_cospan_triangle}
\begin{tikzcd}[row sep=small]
	&\cospan\ar[dl, "\frob"']\ar[dr, "\frob'"]\\
	\cat{H}\ar[rr, "F"']&&\cat{H}'.
\end{tikzcd}
\end{equation}

\begin{lemma}\label{lem.hcca_on_mor}
Let $F\colon \cat{H} \to \cat{H'}$ be an $\io$ hypergraph functor between
hypergraph categories over $\Lambda$. For any $X\in\cospan$, define $\alpha_X\colon A_\cat{H}(X)\to \hcca[\cat{H}'](X)$ as the composite
\begin{equation}\label{eqn.Psi_on_mor}
	A_\cat{H}(X)\coloneqq\cat{H}(I,\frob(X))\Too{F}\cat{H}'(FI,F\circ\frob (X))=\cat{H'}(I,\frob'(X))=:\hcca[\cat{H}'](X).
\end{equation}
This defines a natural transformation $\alpha\colon \cat{H} \to \cat{H'}$.
\end{lemma}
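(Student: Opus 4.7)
The plan has three steps: check well-definedness of the components $\alpha_X$, then verify naturality in $X$, and finally verify compatibility with the lax monoidal structures of $\hcca[\cat{H}]$ and $\hcca[\cat{H}']$. For well-definedness, I would invoke the commutative triangle \eqref{eqn.io_cospan_triangle}, which gives $F\circ\frob=\frob'$, together with the $\io$ property of $F$ (so $FI=I$ and $F(\frob X)=\frob'(X)$ for every $X\in\cospan$). These two facts identify $\cat{H}'(FI,F(\frob X))$ with $\cat{H}'(I,\frob'(X))=\hcca[\cat{H}'](X)$ on the nose, so the displayed composite defining $\alpha_X$ indeed lands where claimed.

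Naturality in $X$ then follows by a short chase: for a morphism $f\colon X\to Y$ in $\cospan$ and any $g\in\hcca[\cat{H}](X)=\cat{H}(I,\frob X)$, both legs of the naturality square reduce to $F(g)\cp\frob'(f)=F(g\cp\frob(f))$, using functoriality of $F$ together with the identity $F\circ\frob=\frob'$.

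For lax monoidality, the unit comparison is the equation $F(\id_I)=\id_I$, which is immediate. The binary case is the substantive one: given $g\colon I\to\frob X$ and $h\colon I\to\frob Y$, I must show that $\alpha_{X\concat Y}(g\otimes h)$ and $\alpha_X(g)\otimes\alpha_Y(h)$ agree as elements of $\cat{H}'(I,\frob'(X)\otimes\frob'(Y))=\hcca[\cat{H}'](X\concat Y)$. Unfolding the definitions in \eqref{eqn.laxator_cospan_alg} reduces this to the strong symmetric monoidality of $F$ combined with the identity of monoidal functors $F\circ\frob=\frob'$. The main obstacle is precisely this step: $F$ is only required to be strong (not strict) symmetric monoidal, so the laxator of $\hcca[\cat{H}]$, which is literally the external $\otimes$ in $\cat{H}$, interacts with $F$ through its coherence isomorphism $\varphi_{\frob X,\frob Y}$, and one must check that these coherences cancel correctly against those of $\frob$ and $\frob'$. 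In the $\of(\Lambda)$ setting this bookkeeping remains manageable because $\cat{H}$ and $\cat{H}'$ are strict, and because $F$, $\frob$, $\frob'$ are all $\io$, the coherence isomorphisms in play are all between pairs of equal objects, so the coherence square can be verified directly from functoriality.
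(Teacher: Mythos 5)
Your overall route is the paper's: the paper's proof consists of writing down the naturality square and the two monoidality squares and asserting they commute ``from the functoriality and strong monoidality of $F$.'' Your well-definedness remark (via the triangle $\frob\cp F=\frob'$ of \eqref{eqn.io_cospan_triangle} and the $\io$ hypothesis), your naturality chase, and your treatment of the unit square ($F(\id_I)=\id_I$) are exactly that argument spelled out, and they are correct; note that the laxators of $\hcca[\cat{H}]$ and $\hcca[\cat{H}']$ do not involve $F$ at all, so no coherence map enters those two checks.

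The one place you add a justification beyond the paper is the binary laxator square, and the reason you give there is not sound as stated. That $F$, $\frob$, $\frob'$ are $\io$ makes $\varphi_{\frob X,\frob Y}$ an endomorphism of a single object, but a strong monoidal functor between strict monoidal categories can perfectly well have non-identity coherence maps between equal objects, so ``the coherences are between equal objects, hence dischargeable by functoriality'' is not an argument. What the hypotheses actually yield is this: applying the hypergraph-functor condition \eqref{eqn.hyp_functor} at $X=I$ together with \cref{prop.unique_Frob_on_I} forces $\varphi_I=\id$ and $\varphi_{I,I}=\id$ (and then $\varphi_{X,\emptylist}=\varphi_{\emptylist,X}=\id$ by the unit coherence axiom), after which naturality of $\varphi$ gives $F(g\otimes h)=(Fg\otimes Fh)\cp\varphi_{\frob X,\frob Y}$ for $g,h$ out of $I$. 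So the square you must verify is precisely the statement that the remaining component $\varphi_{\frob X,\frob Y}$ acts trivially by postcomposition on such morphisms, and neither ``functoriality'' nor bare strong monoidality delivers that; an honest proof must either supply this cancellation or restrict to strict monoidal $F$ (which is all that is used elsewhere, cf.\ \cref{lem.calgtohypmorphisms}). To be fair, the paper's own proof is no more explicit on this point---it simply cites strong monoidality of $F$---so your proposal matches the published argument in structure and level of rigor; only the heuristic you offer for the delicate step should be discarded or replaced.
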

\begin{proof}
The naturality and monoidality of $\alpha$, i.e.\ the commutativity of the following diagrams for any $X,Y$ and $f\colon X\to Y$ in $\cospan$
\[
\begin{tikzcd}[column sep=18pt,font=\footnotesize]
  A_\cat{H}(X)
  	\ar[d, "A_\cat{H}(f)"']\ar[r, "\alpha_X"]&
	\hcca[\cat{H}'](X)
		\ar[d, "A_\cat{H}(f)"]\\
  A_\cat{H}(Y)
  	\ar[r, "\alpha_Y"']&
	\hcca[\cat{H}'](X)	
\end{tikzcd}
\hspace{.3in}
\begin{tikzcd}[column sep=-5pt,font=\footnotesize]
	&\{1\}\ar[dl, "\gamma"']\ar[dr, "\gamma'"]\\
	A_\cat{H}(\emptylist)\ar[rr, "\alpha_\emptylist"']&&\hcca[\cat{H}'](\emptylist)
\end{tikzcd}
\hspace{.3in}
\begin{tikzcd}[column sep=20pt,font=\footnotesize]
	\hcca[\cat{H}](X)\times \hcca[\cat{H}](Y)
		\ar[d, "\alpha_X\times\alpha_Y"']
		\ar[r, "\gamma_{X,Y}"]
	&
	\hcca[\cat{H}](X\concat Y)
		\ar[d, "\alpha_{X\concat Y}"]
	\\
	\hcca[\cat{H}'](X)\times \hcca[\cat{H}'](Y)
		\ar[r, "\gamma_{X,Y}'"']	
	&
	\hcca[\cat{H}'](X\concat Y)
\end{tikzcd}
\]
arise from the functoriality and strong monoidality of $F$.
\end{proof}

\begin{proof}[Proof of \cref{prop.hyptocalg}]\label{proof.hyptocalg}
Choose any set $\Lambda$.%
\footnote{To be more precise we might use $\Lambda$ to annotate the functor
$\hcca$ as $\hcca^\Lambda$, but we leave off the superscript for typographical
reasons.}
We define $\hcca$ on objects by \cref{lem.hcca_on_obj} and on morphisms by
\cref{lem.hcca_on_mor}. It remains to check this is functorial, and that it is
natural as $\Lambda$ varies in $\smset_\List$.

If $F=\id_{\cat{H}}$ is the identity then each component $\alpha_X=\hcca[F](X)$
defined in \cref{eqn.Psi_on_mor} is also the identity. Similarly, given two
composable $\io$ hypergraph functors $\cat{H}\To{F}\cat{H}'\To{F'}\cat{H}''$,
the associated commutative diagrams \cref{eqn.io_cospan_triangle} compose, and
again by \cref{eqn.Psi_on_mor}, we have $\hcca[F](X)\cp\hcca[F'](X)=\hcca[F\cp
F'](X)$ for any $X\in\cospan$.

We now show that the above is natural in $\Lambda\in\smset_\List$. Let $f\colon\Lambda\to\List(\Lambda')$ be a function; we want to show that the following square commutes:
\begin{equation}\label{eqn.naturality1}
\begin{tikzcd}
	\hyp_{\of(\Lambda')}\ar[r, "\hcca"]\ar[d, "\hyp_{\of(f)}"']&
	\lax(\cospan[\Lambda'],\smset)\ar[d, "{\cospan[f]}"]\\
	\hyp_{\of(\Lambda)}\ar[r, "\hcca"']&
	\lax(\cospan[\Lambda],\smset)
\end{tikzcd}
\end{equation}
That is, for any $\cat{H}'\in\hyp_{\of(\Lambda')}$, we want to show
$\cospan[f]\cp\hcca[\cat{H}']=\hcca[\hyp_{\of(f)}(\cat{H}')]$ as functors
$\cospan\to\smset$. Recalling from \cref{eqn.Psi_on_obj} the definition
$\hcca[\cat{H}] = \frob \cp \cat{H}(I,-)$, this is simply the commutativity of
the diagram
\[
\begin{tikzcd}[column sep=15pt,row sep=15pt]
	\cospan[\Lambda]\ar[rr, "{\cospan[f]}"]\ar[d, "\frob"']
	&
	& \cospan[\Lambda']\ar[d, "\frob'"]
	\\
	\hyp_{\of(f)}(\cat{H}') \ar[rr,"F"]
	\ar[dr,"{\hyp_{\of(f)}(\cat{H}')(I,-)}"']
	&
	&\cat{H}' \ar[dl,"{\cat{H}'(I,-)}"] \\
	& \smset
\end{tikzcd}
\]
which commutes by \cref{eqn.factor_functor2} and the fact that $F$ is a fully
faithful hypergraph functor.
\end{proof}

\begin{remark}
An analogous construction can be used to obtain a cospan-algebra from any
hypergraph category $\cat{H}$, even if it is not objectwise-free. Rather than use the counit map $\frob_\cat{H}\colon\cospan[\gens(\cat{H})]\to\cat{H}$ from \cref{cor.factor_adj}, we use the counit map $\frob'_\cat{H}\colon\cospan[\ob(\cat{H})]\to\cat{H}$ from \cref{thm.adjunction_cospan_ob}. Then in place of the lax monoidal functor $\cat{H}(I,\frob_{\cat{H}}(-))$ from \cref{eqn.Psi_on_obj}, one defines a lax monoidal functor $\cospan[\ob(\cat{H})]\to\smset$ by $\cat{H}(I,\frob_{\cat{H}}'(-))$. 

Although we will not give full details here, it is easy to show this
construction extends to a functor $\hyp \longrightarrow \calg$.
\end{remark}

To conclude this subsection, we use the functor $\hcca$ to construct the initial
cospan-algebra over $\Lambda$; this will be useful in what follows. Recall from
\cref{cor.factor_adj} that $\cospan$ is the free hypergraph category over
$\Lambda$. 

\begin{remark}\label{rem.partII}
Applying the functor $\hcca$ from \cref{prop.hyptocalg} to the hypergraph category $\cospan$ yields $\prt_\Lambda$ from \cref{ex.part}:
\[\hcca[\cospan]=\prt_\Lambda.\]
Indeed, \cref{prop.cospan_frob_id} says that the map $\frob_{\cospan}$ used
in \cref{eqn.Psi_on_obj} to define $\hcca[\cospan]$ is in fact the identity.
Thus $\hcca[\cospan](X)=\cospan(\emptylist,X)$, which is exactly $\prt_\Lambda(X)$. 
\end{remark}

We often denote $\prt_\Lambda$ simply by $\prt$ if $\Lambda$ is clear from context.

\begin{proposition}\label[proposition]{prop.ica_initial_cospan_algebra}
$\prt\colon\cospan\to\smset$ is the initial
cospan-algebra over $\Lambda$. 
\end{proposition}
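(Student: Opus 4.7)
The plan is to construct, for an arbitrary cospan-algebra $a\colon\cospan\to\smset$, a unique monoidal natural transformation $\alpha\colon\prt\to a$. The key observation is that, as noted in \cref{rem.partII}, $\prt(X)=\cospan(\emptylist,X)$, so $\prt$ is the hom-functor represented by the monoidal unit. Thus the ordinary Yoneda lemma gives a bijection between natural transformations $\beta\colon\prt\to a$ and elements $\xi\in a(\emptylist)$, with $\beta\mapsto\beta_\emptylist(\id_\emptylist)$ in one direction and $\xi\mapsto\bigl(X\mapsto\bigl(c\mapsto a(c)(\xi)\bigr)\bigr)$ in the other.

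Next I would impose the monoidality conditions on $\beta$. The unit laxator of $\prt$ is the function $\gamma^\prt\colon\{*\}\to\prt(\emptylist)$ with $\gamma^\prt(*)=\id_\emptylist$ (see \cref{eqn.laxator_cospan_alg}). The unit axiom for a monoidal natural transformation demands $\beta_\emptylist\cp\gamma^\prt=\gamma^a$, which forces $\xi=\gamma^a(*)$. This gives uniqueness immediately, and so we are obliged to define
\[
  \alpha_X(c)\coloneqq a(c)\bigl(\gamma^a(*)\bigr),\qquad c\in\cospan(\emptylist,X).
\]

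For existence, naturality of $\alpha$ is built in from the Yoneda construction and the functoriality of $a$, so it only remains to check the multiplication axiom of monoidal naturality. Given $c\colon\emptylist\to X$ and $d\colon\emptylist\to Y$, I must verify
\[
  a(c\concat d)\bigl(\gamma^a(*)\bigr)=\gamma^a_{X,Y}\bigl(a(c)(\gamma^a(*)),\,a(d)(\gamma^a(*))\bigr).
\]
Applying the laxator naturality square of $a$ to the pair $(c,d)$ rewrites the right-hand side as $a(c\concat d)\bigl(\gamma^a_{\emptylist,\emptylist}(\gamma^a(*),\gamma^a(*))\bigr)$, so the claim reduces to the identity $\gamma^a_{\emptylist,\emptylist}(\gamma^a(*),\gamma^a(*))=\gamma^a(*)$ in $a(\emptylist)$. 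This in turn is exactly the unit-coherence triangle for the lax monoidal functor $a$, using that $\emptylist\concat\emptylist=\emptylist$ and that the left unitor on $\emptylist$ in $\cospan$ is the identity cospan.

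The only real bookkeeping obstacle is this last coherence identity; once it is in hand, everything else is a direct application of Yoneda together with the defining axioms of lax monoidal functors and their monoidal transformations. Thus $\prt$ admits a unique map to every cospan-algebra over $\Lambda$, proving it initial.
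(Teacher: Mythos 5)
Your proof is correct and follows essentially the same route as the paper's: the unique transformation is given by $\alpha_X(c)=a(c)(\gamma^a(*))$, uniqueness is forced by $\gamma^{\prt}(*)=\id_\emptylist$ together with the unit axiom, and the multiplication axiom reduces, via naturality of the laxator, to the unit coherence of $a$ at $\emptylist$. The Yoneda packaging of the naturality/uniqueness step is only a cosmetic streamlining of the paper's hands-on verification.
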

\begin{proof}
Let $(A,\gamma)\colon\cospan\to\smset$ be a cospan-algebra over $\Lambda$. We need to show
there is a unique monoidal natural transformation $\alpha\colon\prt\to A$. Given
$X \in \cospan$, define
\begin{align}
\alpha_X\colon \prt(X)=\cospan(\emptylist,X) &\longrightarrow A(X); \nonumber \\
f\enspace &\longmapsto \big(1\To{\gamma}A\emptylist \To{Af}
AX\big). \label{eqn.def_unique_nattrans}
\end{align}
This is natural because $A$ is a functor: $A(g)(A(f)(\gamma))=A(f\cp
g)(\gamma)$. To prove that $\alpha$ is monoidal, we must show that for any
$f\colon \emptylist\to X$ and $g\colon \emptylist\to Y$ in $\cospan$:
\[
\begin{tikzcd}
	\ul{1}\ar[r, "\gamma"]\ar[d, "\gamma"']&
	A(\emptylist)\ar[r, "\Delta"]&
	A(\emptylist)\times A(\emptylist)\ar[r, "A(f)\times A(g)"]\ar[d, "\gamma_{\emptylist,\emptylist}"]&[20pt]
	A(X)\times A(Y)\ar[d, "\gamma_{X,Y}"]\\
	A(\emptylist)\ar[rr, "A(\lambda^{-1})"']&&
	A(\emptylist\otimes \emptylist)\ar[r, "A(f\otimes g)"']&
	A(X\otimes Y)
\end{tikzcd}
\]
where $\Delta$ is the diagonal map. This follows from the monoidality of $A$ and
the fact that $\Delta=\lambda^{-1}\colon \ul{1} \to \ul{1}\times \ul{1}$.

Finally, we must show that the definition of $\alpha$ in \cref{eqn.def_unique_nattrans} is the only possible one. To see this, first note that by \cref{eqn.laxator_cospan_alg}, the laxator
$\gamma^{\prt}\colon\ul{1}\to\prt(\varnothing)$ sends $1\mapsto\id_\varnothing$. Then since $\alpha$ is assumed to be a monoidal natural transformation, the following diagram commutes for any $X\in\cospan$:
\[
\begin{tikzcd}[column sep=large]
  \ul{1} \ar[drr, bend left=10pt, "f"] 
  \ar[dr, pos=.7, "\gamma^{\prt}"'] 
  \ar[ddr, bend right=25pt, "\gamma"'] \\
  [-15pt] 
  & \prt(\emptylist)\ar[r, "\prt(f)"']\ar[d, "\alpha_\emptylist"] 
  & \prt(X)\ar[d, "\alpha_X"] \\
  & A(\emptylist)\ar[r, "A(f)"'] 
  & A(X)
\end{tikzcd}
\]
and this forces $\alpha_X(f)=\gamma\cp A(f)$ as in \cref{eqn.def_unique_nattrans}.
\end{proof}

\section{From cospan-algebras to hypergraph categories}\label{ssec.cahc}
Our aim in this subsection is to provide the other half of the equivalence
\eqref{eqn.equiv_hyp_calg_Lambda}, converting any cospan-algebra $A$ into a
hypergraph category $\cahc[A]$. The aim of this subsection is detail the
following construction.

\begin{proposition} \label[proposition]{prop.calgtohyp}
For any $\Lambda\in\smset_\List$, we can naturally construct a functor
\[
\cahc\colon \lax(\cospan,\smset) \to \hyp_{\of(\Lambda)}.
\]
\end{proposition}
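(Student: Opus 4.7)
The plan is to define $\cahc[A]$ as follows. Its set of objects is $\List(\Lambda) = \ob(\cospan)$ with the identity $\of$-structure, and its hom-sets are given via the name correspondence of \cref{prop.gather_and_parse}:
\[
\cahc[A](X, Y) \coloneqq A(X \concat Y).
\]
Identities, composition, monoidal products on morphisms, and Frobenius generators are all obtained by applying $A$ to the name (in $\cospan$) of the corresponding cospan, together with the unit laxator $\gamma\colon 1 \to A(\emptylist)$ and the binary laxator $\gamma_{-,-}$ as needed. Explicitly, $\id_X \coloneqq A(\cupp_X)(\gamma(*))$, where $\cupp_X\colon \emptylist \to X\concat X$ is the canonical self-duality of $\cospan$ (\cref{prop.compactclosed}); the composition of $f \in A(X\concat Y)$ and $g \in A(Y \concat Z)$ is
\[
f \cp g \coloneqq A(\comp^Y_{X,Z})\bigl(\gamma_{X \concat Y,\, Y \concat Z}(f,g)\bigr) \in A(X \concat Z),
\]
using the cospan $\comp^Y_{X,Z}$ of \cref{ex.comp}; and the Frobenius generators $\mu_X,\eta_X,\delta_X,\epsilon_X$ on $X \in \List(\Lambda)$ are defined as the images under $A$ of the names (in $\cospan$) of the Frobenius generators on $X$ there. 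The monoidal product on morphisms is defined analogously, using a symmetry cospan to interleave, and on objects by concatenation.

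The bulk of the work is verifying the hypergraph axioms. The guiding principle is that each axiom---categorical associativity and unitality, bifunctoriality and symmetry of $\otimes$, the nine special commutative Frobenius equations, the compatibility \cref{eqn.hypergraph_def} between Frobenius structures and $\otimes$, and the unit coherence axiom---reduces to an equality of two elements of $A(W)$ for some $W \in \cospan$. Each such equality follows from two ingredients: applying the functor $A$ to the analogous equation of cospans, which holds automatically because $\cospan$ is itself a hypergraph category (\cref{ex.cospanc}); and the coherence axioms for the laxators. For instance, associativity of composition reduces to the equality of two cospans $(X\concat Y)\concat(Y\concat Z)\concat(Z\concat W) \to X\concat W$ built from $\comp^Y$ and $\comp^Z$, together with associativity of $\gamma_{-,-}$; the Frobenius law reduces to the Frobenius law for the canonical structure on each $X\in\cospan$; and the unit coherence axiom $\eta_{\emptylist} = \id_{\emptylist} = \epsilon_{\emptylist}$ is immediate from strict unitality of $\gamma$.

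For functoriality, given a morphism $\alpha\colon A \to A'$ of cospan-algebras over $\Lambda$ (that is, a monoidal natural transformation over $\id_\Lambda$), define $\cahc[\alpha]\colon \cahc[A] \to \cahc[A']$ to be the identity on objects and on morphisms to act componentwise as $\alpha_{X\concat Y}\colon A(X\concat Y) \to A'(X\concat Y)$. Naturality of $\alpha$ ensures that $\cahc[\alpha]$ respects composition and Frobenius generators; monoidality of $\alpha$ ensures it respects the monoidal product and the unit laxator; and thus $\cahc[\alpha]$ is an $\io$ hypergraph functor. Functoriality $\cahc[\id] = \id$ and $\cahc[\alpha \cp \alpha'] = \cahc[\alpha] \cp \cahc[\alpha']$ is then immediate.

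Finally, naturality in $\Lambda \in \smset_\List$ amounts to showing that for any $f\colon \Lambda \to \List(\Lambda')$ and cospan-algebra $A'$ over $\Lambda'$, we have an equality $\cahc[\cospan[f] \cp A'] = \hyp_{\of(f)}(\cahc[A'])$ of $\of(\Lambda)$ hypergraph categories; this follows directly from \cref{eqn.factor_functor2} together with the fact that, by construction, all hom-sets and structural maps in $\cahc$ are computed by applying the underlying cospan-algebra to cospans pushed forward along $\cospan[f]$. The main obstacle is the sheer book-keeping required in the axiom verifications, but the content is uniformly that \emph{if two cospans are equal in $\cospan$, then $A$ identifies their images}, and this is nothing more than functoriality of $A$ combined with the laxator coherences.
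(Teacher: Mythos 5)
Your proposal is correct and follows essentially the same route as the paper: the paper's \cref{lem.calgtohypobjects,lem.calgtohypmorphisms} define $\cahc[A]$ with the same hom-sets $A(X\concat Y)$, the same composition via the laxator and $A(\comp^Y_{X,Z})$, the same identities, braidings and Frobenius generators as images of the name-cospans $\emptylist\to X\concat Y$, the same componentwise definition of $\cahc[\alpha]$, and the same verification of naturality in $\Lambda$ via \cref{eqn.factor_functor2}. Your axiom-checking strategy---reduce each law to an equation between cospans in $\cospan$ plus functoriality of $A$ and the laxator coherences (with symmetry of $A$ entering for interchange)---is exactly the reduction the paper invokes.
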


This will be proved on page \pageref{proof.calgtohyp}. As in the previous subsection, we first prove two lemmas that we will use to
define this functor on objects (\cref{lem.calgtohypobjects}) and then on
morphisms (\cref{lem.calgtohypmorphisms}). We will then again conclude the
subsection with some observations on what this implies about the interaction
between cospans and composition in hypergraph categories.

Given a cospan-algebra $(A,\gamma)$, we construct a hypergraph category
$\cahc[A]$ with objects and Frobenius structure coming from $\cospan$, and the
homsets coming from the image of objects under $A$.

\begin{lemma} \label[lemma]{lem.calgtohypobjects}
Let $A\colon\cospan\to\smset$ be a lax monoidal functor. We may define a
strict hypergraph category $\cahc[A]\in\hyp_{\of(\Lambda)}$ with:
\begin{itemize}
\item objects given by lists in $\Lambda$,
\item morphisms $X \to Y$ given by the set $A(X\concat Y)$,
\item monoidal structure arising from the monoidal structure on $A$, and
\item composition, identity, and hypergraph structure arising from the images of a certain cospans under $A$.
\end{itemize}
\end{lemma}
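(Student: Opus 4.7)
The plan is to leverage the self-dual compact closed structure of hypergraph categories (\cref{prop.compactclosed,prop.gather_and_parse}) as a guiding principle. In any hypergraph category, morphisms $X \to Y$ correspond bijectively to their names, which are morphisms $I \to X \otimes Y$. Transporting this correspondence through the lax monoidal functor $A\colon \cospan \to \smset$, we define $\cahc[A]$ to have objects $\List(\Lambda)$, with monoidal product given by concatenation $\concat$ and monoidal unit $\emptylist$, and hom-sets $\cahc[A](X, Y) \coloneqq A(X \concat Y)$. This choice automatically makes $\cahc[A]$ strict as a symmetric monoidal category and objectwise-free on $\Lambda$.

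The composition and identity are defined by using specific cospans as ``control wires,'' mimicking \cref{prop.comp_works,prop.remember_the_name}. For identity $\id_X \in A(X \concat X)$, take the image of the single element $\gamma_\emptylist(*) \in A(\emptylist)$ under $A$ applied to the cospan $\cupp_X\colon\emptylist \to X \concat X$ in $\cospan$ (the name of $\id_X^{\cospan}$). For composition, given $f \in A(X \concat Y)$ and $g \in A(Y \concat Z)$, define
\[
f \cp g \;\coloneqq\; A(\comp^Y_{X,Z})\bigl(\gamma_{X \concat Y,\, Y \concat Z}(f, g)\bigr) \;\in\; A(X \concat Z),
\]
where $\comp^Y_{X,Z}\colon X \concat Y \concat Y \concat Z \to X \concat Z$ is the cospan of \cref{eq.comp}. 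The monoidal product of morphisms $f \in A(X \concat Y)$ and $f' \in A(X' \concat Y')$ is likewise constructed by applying the laxator $\gamma$ and then $A$ to the braiding cospan that permutes $X \concat Y \concat X' \concat Y'$ into $X \concat X' \concat Y \concat Y'$.

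For the hypergraph structure, each object $X \in \List(\Lambda)$ already carries a canonical Frobenius structure in $\cospan$ by \cref{ex.cospanc_scFrob}. We transport each generator $\mu_X^{\cospan}, \eta_X^{\cospan}, \delta_X^{\cospan}, \epsilon_X^{\cospan}$ to $\cahc[A]$ via its name: that name is a cospan from $\emptylist$ to the appropriate tensor power of $X$, and its image under $A$, fed the unit laxator element, lands in the appropriate hom-set $A(X \concat \cdots \concat X)$. This gives each object of $\cahc[A]$ a Frobenius structure.

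Verification that $\cahc[A]$ is a strict hypergraph category consists in checking associativity and unitality of composition, the interchange law, the nine axioms of \cref{def.scfm}, the monoidal compatibility \cref{eqn.hypergraph_def}, and the unit coherence axiom. Each of these reduces to an equality of two cospans in $\cospan$, which holds because $\cospan$ is itself a hypergraph category (\cref{ex.hypergraph_structures}). Applying the functor $A$ preserves these equalities, and the lax-monoidal coherence of $A$ --- associativity, unitality, and naturality of $\gamma$ --- mediates between iterated laxator applications and $A$ applied to cospan composites. The main obstacle is precisely this bookkeeping: every axiom must be rephrased as an equation between two elements of some $A(W)$, each obtained by applying the laxator repeatedly and then $A$ to a composite cospan; one then shows the two underlying cospans are equal in $\cospan$. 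No novel cospan identities are required, since the needed ones are exactly the hypergraph axioms already holding in $\cospan$; what takes work is the systematic translation, which we expect to treat schematically rather than instance-by-instance.
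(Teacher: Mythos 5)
Your construction coincides with the paper's: the same hom-sets $A(X\concat Y)$, composition via the laxator followed by $A(\comp^Y_{X,Z})$, monoidal product of morphisms via the laxator and a braiding cospan, and identities, braidings, and Frobenius generators obtained as $A$ applied to the corresponding cospans $\emptylist\to X\concat Y$ fed the unit laxator element, with the axiom-checking likewise deferred to the same schematic reduction (cospan equalities in $\cospan$ plus functoriality of $A$ and coherence/naturality of $\gamma$). The only detail the paper flags that your coherence list omits is that the interchange law also uses the \emph{symmetry} of the laxator of $A$.
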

\begin{proof}
We first detail the structure of $\cahc[A]$, outlined above. For this we will need to give explicit names to the laxator maps, say $\gamma\colon\{1\}\to A(\emptylist)$ and $\gamma_{X,Y}\colon A(X)\times A(Y)\to A(X\concat Y)$ for any $X,Y\in\cospan$.

We define $\ob(\cahc[A])\coloneqq\List(\Lambda)$ and for any lists $X,Y$ in $\Lambda$, we define the homset
\begin{equation}\label{eqn.homset_HA}
	\cahc[A](X,Y) \coloneqq A(X\concat Y).
\end{equation}
The monoidal unit is the empty list $\emptylist$, the monoidal product on objects is given by concatenation of lists. The monoidal product on
morphisms is given by the lax structure on $A$:
\[
A(W\concat X) \times A(Y\concat Z) \To{\gamma_{W\concat X,Y \concat Z}} 
A(W \concat X \concat Y \concat Z) \To{A(\id_W \concat \sigma_{X,Y}\concat \id_Z)}
A(W \concat Y \concat X \concat Z).
\]
This is strict, so we need not define unitors and associators. 

The structure maps in $\cahc[A]$---the composition, identity, braiding, and Frobenius maps---are
constructed using the image under $A$ of particular cospans, as we now explain. 

Let $\Lambda$ be a set, and $X,Y,Z\in\Lambda$, and recall the morphism $\comp^Y_{X,Z}$ from \cref{ex.comp}. We define composition $\cahc[A](X,Y)\times\cahc[A](Y,Z)\to\cahc[A](X,Z)$ by
the formula
\begin{equation}\label{eq.composition}
A(X\concat Y) \times A(Y \concat Z) \To{\gamma_{X\concat Y,Y \concat Z}} A(X \concat Y
\concat Y \concat Z) \To{A(\comp^Y_{X,Z})} A(X \concat Z).
\end{equation}

All of the remaining structure maps in $\cahc[A]$ arise in similar ways, from cospans of the form $s\colon \emptylist \to X\concat Y$, where
$X$ and $Y$ are the domain and codomain of the map being constructed. Indeed,
given such an $s$, the composite $\{1\}\To{\gamma} A(\emptylist) \To{A(s)}
A(X\concat Y)$ gives an element of $A(X \concat Y)=\cahc[A](X,Y)$. 
The six required cospans $s$ are given as follows:%
\footnote{As in \cref{eqn.id_and_hyp_struc}, the wiring diagram for a cospan of the form $\emptylist\to X\concat Y$ will have no interior cells, and the ports of the exterior cell are partitioned into two disjoint sets.}
\begin{equation}\label{eqn.id_and_hyp_struc}
\begin{tikzpicture}[unoriented WD, pack size=25pt, pack inside color=white, pack outside color=black, link size=2pt, font=\footnotesize, spacing=30pt, baseline=(un.south)]
	\node[link] at (0,0) (i) {};
	\node[outer pack, inner xsep=10pt, inner ysep=10pt, fit=(i)] (outi) {};
	\draw (outi.west) -- (i);
	\draw (outi.east) -- (i);
	\node[below=.2 of outi] (id) {identity};
	\node[link,above right=4pt and 3.5 of i] at (0,0) (u) {};
	\node[link,below =8pt of u] (d) {};
	\node[outer pack, inner xsep=10pt, inner ysep=4.5pt, fit=(u) (d)] (outb) {};
	\draw (outb.north west) -- (u);
	\draw (outb.south west) -- (d);
	\draw (outb.north east) -- (d);
	\draw (outb.south east) -- (u);
	\node[below=.2 of outb] (br) {braiding};
	\node[link, right=7 of i] at (0,0) (m) {};
	\node[outer pack, inner xsep=10pt, inner ysep=10pt, fit=(m)] (outm) {};
	\draw (outm.north west) -- (m);
	\draw (outm.south west) -- (m);
	\draw (outm.east) -- (m);
	\node[below=.2 of outm] (mult) {(co)multiplication};
	\node[link, right=10.5 of i] at (0,0) (un) {};
	\node[outer pack, inner xsep=10pt, inner ysep=10pt, fit=(un)] (outu) {};
	\draw (outu.east) -- (un);
	\node[below=.2 of outu] (unit) {(co)unit};
\end{tikzpicture}
\end{equation}

It remains to check the above data obeys the hypergraph category axioms, but this follows
from routine calculation. In particular, the associativity, identity, symmetric,
and hypergraph laws reduce to facts about the composition operation in $\cospan$ (this is easy to prove; see \cref{ex.cospan_scFrob} for intuition) although
we must also use the naturality of $\gamma$ and the functoriality of $A$ to make this reduction. The interchange law additionally uses the fact that $A$ is a \emph{symmetric} monoidal functor.
\end{proof}

A morphism $A \to B$ of lax monoidal functors $\cospan\to\smset$ consists of a collection of
functions $\alpha_X\colon A(X) \to B(X)$, one for each $X\in\List(\Lambda)$. Thus by \cref{eqn.homset_HA}, $\alpha_{X\concat Y}$ provides a map $\cahc[A](X,Y) \to
\cahc[B](X,Y)$ for all $X,Y\in\ob(\cahc[A])=\ob(\cahc[B])$. This is exactly the
data of an $\io$ hypergraph functor $F_\alpha\colon\cahc[A] \to
\cahc[B]$, though it remains to check that $F_\alpha$ is well-defined; we do that next.

\begin{lemma} \label[lemma]{lem.calgtohypmorphisms}
Let $\alpha\colon A \to B$ be a morphism of cospan-algebras.
Setting $F_\alpha(f)\coloneqq \alpha(f)$ for each $f \in \cahc[A](X,Y)$ 
defines an $\io$ hypergraph functor $F_\alpha\colon \cahc[A] \to \cahc[B]$.
\end{lemma}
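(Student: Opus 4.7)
The plan is to verify each piece of structure making up an $\io$ hypergraph functor: (i) $F_\alpha$ is well-defined and identity-on-objects, (ii) $F_\alpha$ preserves composition and identities, (iii) $F_\alpha$ is strictly monoidal and preserves the symmetry, and (iv) $F_\alpha$ preserves the Frobenius structure on each object. Point (i) is immediate, since both $\cahc[A]$ and $\cahc[B]$ have object set $\List(\Lambda)$ and $F_\alpha$ acts as the identity on objects by construction. The key insight that makes (ii)--(iv) uniform is that, by \cref{lem.calgtohypobjects}, every structure map in $\cahc[A]$---composition, identity, braiding, monoidal product on morphisms, and each Frobenius generator---was built as the image under $A$ of a specific cospan, pre-composed with an appropriate laxator. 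Since a morphism of cospan-algebras is by definition a monoidal natural transformation, $\alpha$ commutes with $A,B$ applied to any morphism (naturality) and with all laxators (monoidality); thus each required equation reduces to pasting two commutative squares.

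Concretely, for preservation of composition, the formula \eqref{eq.composition} expresses $f \cp g$ in $\cahc[A]$ as $A(\comp^Y_{X,Z}) \circ \gamma^A_{X \concat Y,\, Y \concat Z}$. Functoriality of $F_\alpha$ amounts to the commutativity of the outer rectangle in
\[
\begin{tikzcd}[column sep=large, font=\footnotesize]
A(X \concat Y) \times A(Y \concat Z) \ar[r, "\gamma^A"] \ar[d, "\alpha \times \alpha"']
& A(X \concat Y \concat Y \concat Z) \ar[r, "A(\comp)"] \ar[d, "\alpha"]
& A(X \concat Z) \ar[d, "\alpha"] \\
B(X \concat Y) \times B(Y \concat Z) \ar[r, "\gamma^B"']
& B(X \concat Y \concat Y \concat Z) \ar[r, "B(\comp)"']
& B(X \concat Z)
\end{tikzcd}
\]
where the left square is monoidality of $\alpha$ and the right is naturality of $\alpha$ at $\comp^Y_{X,Z}$. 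Preservation of identities, of the braiding, of the monoidal product on morphisms, and of each of the four Frobenius generators admits an analogous proof of exactly this shape, using the corresponding cospans of \eqref{eqn.id_and_hyp_struc}: apply the monoidality square at the $\gamma$ laxator, then the naturality square at the relevant cospan.

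For (iv), once the structure maps are preserved, observe that $F_\alpha$ is strict monoidal (both categories are strict, coherence maps are identities) and identity-on-objects, and both $\cahc[A]$ and $\cahc[B]$ are objectwise-free on the same set $\Lambda$. The above verification shows in particular that on each generator $l\in\Lambda$, the Frobenius structure of $\cahc[A]$ is carried by $F_\alpha$ to the Frobenius structure on $l$ in $\cahc[B]$. By \cref{lemma.gen_hyp_struc}, this suffices to conclude that $F_\alpha$ is a hypergraph functor.

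The main obstacle is purely bookkeeping: there are several axioms to verify, but each is forced by $\alpha$ being a monoidal natural transformation, so no new ingredients are needed beyond those already assembled in \cref{lem.calgtohypobjects}. No subtle coherence issue arises, precisely because the entire hypergraph structure of $\cahc[A]$ was built from the cospan-algebra data of $A$, all of which $\alpha$ respects by hypothesis.
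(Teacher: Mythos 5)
Your proposal is correct and follows essentially the same route as the paper: the composition square pasted from the monoidality of $\alpha$ (at the laxator) and the naturality of $\alpha$ (at the cospan $\comp^Y_{X,Z}$), with the identities, braiding, monoidal product, and Frobenius generators handled by the same two-square pattern at the cospans of \cref{eqn.id_and_hyp_struc}, plus the observation that $F_\alpha$ is identity-on-objects and strict monoidal. The only difference is your closing appeal to \cref{lemma.gen_hyp_struc}, which is harmless but unnecessary since your direct verification already covers all objects.
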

\begin{proof}
The relevant axioms are routine consequences of the fact that $\alpha$ is a
monoidal natural transformation. For example, $F_\alpha$ preserves composition
when the following diagram commutes: 
\[
\begin{tikzcd}
A(X \concat Y) \times A(Y \concat Z) \ar[r,"\gamma_A"] \ar[d, "\alpha\times \alpha"] & A(X
\concat Y \concat Y \concat Z) \ar[r, "Ac"] \ar[d, "\alpha"] & A(X \concat Z)
\ar[d,"\alpha"] \\
B(X \concat Y) \times B(Y \concat Z) \ar[r, "\gamma_B"] & B(X\concat Y \concat Y \concat Z)
\ar[r, "Bc"] & B(X \concat Z)
\end{tikzcd}
\]
where $c$ is the cospan defining composition (see \eqref{eq.composition}).  This
is always true: the first square is the monoidality of $\alpha$, and the second
is the naturality of $\alpha$ with respect to $c$. Similarly, $F_\alpha$
preserves identities and all hypergraph structure as by the unit monoidality law
for $\alpha$ and by the naturality of $\alpha$ with respect to the cospans that
define the identity, braiding and Frobenius maps.

Note that also that $F_\alpha$ strict monoidal: $\cahc[A]$ and $\cahc[B]$ have
the same objects and monoidal product on objects, and $F_\alpha$ is identity on
objects.
\end{proof}

\begin{proof}[Proof of \cref{prop.calgtohyp}.]\label{proof.calgtohyp}
Choose any set $\Lambda$. For any lax monoidal functor
$A\colon\cospan\to\smset$,  \cref{lem.calgtohypobjects} produces a hypergraph
category $\cahc[A]\in\hyp_{\of(\Lambda)}$, and for any morphism $\alpha\colon A\to B$, \cref{lem.calgtohypmorphisms} produces a hypergraph functor between them. Functoriality is
straightforward: given composable cospan-algebra morphisms $\alpha$ and $\beta$,
$(F_\alpha\cp F_\beta)(-) = \beta(\alpha(-)) = F_{\alpha\cp\beta}(-)$.

Now suppose that $f\colon\Lambda\to\Lambda'$ is a morphism in $\smset_\List$. For naturality, we need to check the commutativity of a diagram much like \cref{eqn.naturality1}, which comes down to checking that $\cat{H}_{(\cospan[f]\cp A')}=\hyp_{\of(f)}\cat{H}_{A'}$ holds for any $A'\colon\cospan[\Lambda']\to\smset$. While an equality of categories may seem strange, these two categories are defined to have the same objects---namely both are $\ob(\cospan)$---as well as the same morphisms. Indeed, by the definition of $\hyp_\of$ in \cref{eqn.def_hyperfact,eqn.factor_functor2}, the definition of $\cahc$ in \cref{eqn.homset_HA}, and the fact that $\frob$ is $\io$, we have
\[
  \cat{H}_{(\cospan[f]\cp A')}(X,Y)\coloneqq
  A'(\cospan[f](X)\oplus\cospan[f](Y))=:
  \hyp_{\of(f)}\cat{H}_{A'}(X,Y)
\]
for any $X,Y\in\cospan$. This completes the proof.
\end{proof}

\begin{remark}
The main difference in perspective between hypergraph categories and
cospan-algebras is that the structure of hypergraph categories involves both
operations and special morphisms, whereas the structure of cospan-algebras
involves just operations. Indeed, a hypergraph category
$\cat{H}\in\hyp_{\of(\Lambda)}$ has the 2-ary operations of composition and
monoidal product, as well as the identity morphism $\id_X$ and four Frobenius
morphisms $\mu_X,\eta_X,\delta_X,\epsilon_X$ for every $X\in\cat{H}$. We saw in
\cref{eq.composition,eqn.id_and_hyp_struc} that both the operations and the
special morphisms can be encoded in various cospans---morphisms in
$\cospan$---and that a cospan-algebra $A$ turns them all into operations.
\end{remark}

We can now put several different ideas together. Recall the initial
cospan-algebra $\prt\colon\cospan\to\smset$ from
\cref{prop.ica_initial_cospan_algebra} and the name bijection
$\gathr{\cdot}\colon\cat{H}(X,Y)\to\cat{H}(\emptylist,X\concat Y)$ from
\cref{prop.gather_and_parse}. The above construction
(\cref{lem.calgtohypobjects}) constructs from the initial cospan-algebra over
$\Lambda$ a hypergraph category $\cat{H}_\prt$ over $\Lambda$, which comes
equipped with the universal map $\frob\colon \cospan \to \cat{H}_\prt$ that
selects its Frobenius morphisms. The following proposition tells us these
Frobenius morphisms are simply the names of the corresponding cospans.

\begin{lemma}\label[lemma]{lemma.frob_name}
Let $\prt$ be the initial cospan-algebra, let $\cat{H}_\prt$ be the corresponding
hypergraph category, and let $\frob\colon \cospan \to \cat{H}_\prt$ be the universal
map. Then for any $f\colon X\to Y$ in $\cospan$, we have $\frob(f)=\gathr{f}$.
\end{lemma}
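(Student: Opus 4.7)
The plan is to recognize $f\mapsto\gathr{f}^{\cospan}$ as a hypergraph functor $\Phi\colon\cospan\to\cat{H}_\prt$, identity-on-objects and hence equal to $\frob$ by uniqueness in the adjunction of \cref{cor.factor_adj}.

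The key observation unlocking this approach is that, by the construction of $\cat{H}_\prt$ from $\prt$ in \cref{lem.calgtohypobjects} together with the fact that $\prt(s)(\id_\emptylist)=s$ for any $s\in\cospan(\emptylist,W)$, the hom-set $\cat{H}_\prt(X,Y)$ is literally $\cospan(\emptylist,X\concat Y)$, and all of its structure maps (identities, composition, braidings, and Frobenius generators) are described by the cospans displayed in \cref{eq.composition,eqn.id_and_hyp_struc}. I therefore define $\Phi$ on objects as the identity and on morphisms by $\Phi(f)\coloneqq\gathr{f}^{\cospan}$, then verify $\Phi$ is an $\io$ hypergraph functor in three steps.

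First, functoriality of $\Phi$ is exactly the content of \cref{prop.comp_works}: composition in $\cat{H}_\prt$ is defined (\cref{eq.composition}) by first pairing via the laxator of $\prt$---which for $\prt$ is given by tensoring cospans in $\cospan$---and then postcomposing with $\comp^Y_{X,Z}$, yielding precisely $(\gathr{f}\otimes\gathr{g})\cp\comp^Y_{X,Z}=\gathr{f\cp g}$. Preservation of identities follows from the elementary identity $\gathr{\id_X}^{\cospan}=\cupp_X^{\cospan}$, which matches the identity cospan pictured in \cref{eqn.id_and_hyp_struc}. Second, strong symmetric monoidality of $\Phi$ reduces to the standard compact-closed identity $\gathr{f\otimes g}=(\gathr{f}\otimes\gathr{g})\cp(\id\otimes\sigma\otimes\id)$ inside $\cospan$, combined with the definition of the monoidal product in $\cat{H}_\prt$. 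Third, preservation of the Frobenius generators is a small pushout computation in $\cospan$: for instance, expanding $\cupp_{X\concat X}\cp(\id_{X\concat X}\otimes\mu_X)$ shows that $\gathr{\mu_X^{\cospan}}$ is precisely the three-legged single-apex cospan representing the multiplication in \cref{eqn.id_and_hyp_struc}, and analogously for $\eta$, $\delta$, and $\epsilon$.

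Having established that $\Phi$ is a hypergraph functor, the bijection $\hyp_\of(\cospan,\cat{H}_\prt)\cong\smset_\List(\Lambda,\Lambda)$ from \cref{cor.factor_adj} assigns to $\Phi$ the same element as it does to $\frob$, since both act as the identity on objects and hence induce the identity map on generators. Therefore $\Phi=\frob$, yielding $\frob(f)=\gathr{f}^{\cospan}$ for every $f\colon X\to Y$. The main obstacle is not any single difficult step but rather the careful bookkeeping in the pushout computations of the third verification above, where one must match the defining cospans of the Frobenius generators in $\cat{H}_\prt$ with the explicit cospans arising as names in $\cospan$.
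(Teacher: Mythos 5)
Your proof is correct, but it packages the final step differently from the paper. The paper's own argument is much terser: it notes that $\cat{H}_\prt(X,Y)=\cospan(\emptylist,X\concat Y)$, invokes \cref{lem.cospan_generators} to say that $\cospan$ is generated by the Frobenius generators (and braidings), and then declares that it suffices to check $\frob$ and $\gathr{\cdot}$ agree on those generators, which holds by the construction in \cref{eqn.id_and_hyp_struc}. Implicit in that reduction is exactly what you verify explicitly: that the name assignment respects composition (via \cref{eq.composition} and \cref{prop.comp_works}), monoidal product, and the structure morphisms, since otherwise agreement on generators would not propagate to all of $\cospan$. Your route makes this precise by upgrading $f\mapsto\gathr{f}$ to an identity-on-objects hypergraph functor $\Phi$ and then concluding $\Phi=\frob$ from the bijection $\hyp_\of(\cospan,\cat{H}_\prt)\cong\smset_\List(\Lambda,\Lambda)$ of \cref{cor.factor_adj}, in the same style as \cref{lemma.factoring_and_frob}, rather than from the generation lemma. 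What your approach buys is an explicit statement (and proof obligation) that naming is itself a hypergraph functor, which also makes the "why generators suffice" question moot; what it costs is the extra verifications, including one you gloss over: preservation of the braiding, i.e.\ that $\gathr{\sigma_{X,Y}}$ equals the braiding cospan of \cref{eqn.id_and_hyp_struc} used in \cref{lem.calgtohypobjects}. That check is of the same routine pushout nature as the Frobenius ones you do acknowledge, so it is an omission of detail rather than a gap.
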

\begin{proof}
Note that by definition we have $\cat{H}_\prt(X,Y)=\prt(X\ast Y)
=\cospan(\emptylist,X\concat Y)$, so the above identity is well typed. Since
$\cospan$ is the coproduct of $\Lambda$-many copies of $\cospan[]$, and since
$\cospan[]$ is generated by the Frobenius generators
(\cref{lem.cospan_generators}) it is enough to check that these two maps $\frob$
and $\gathr{\cdot}$ agree on the Frobenius generators. This is true by
construction as detailed in \cref{eqn.id_and_hyp_struc} of
\cref{lem.calgtohypobjects}.
\end{proof}

\begin{corollary} \label[corollary]{cor.frobenius_from_names_of_cospans}
Let $A\colon\cospan\to\smset$ be a cospan-algebra, let $c\colon X\to Y$ be a
cospan, and consider the counit map $\frob_A\colon\cospan\to\cahc[A]$ from \cref{cor.factor_adj}. Then
\[\frob_A(c)=A(\gathr{c})(\gamma).\]
\end{corollary}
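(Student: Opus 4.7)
The plan is to reduce the general statement to the case of the initial cospan-algebra $\prt$, where the identity has already been established as Lemma \ref{lemma.frob_name}, and then transport along the unique map $\prt \to A$ provided by Proposition \ref{prop.ica_initial_cospan_algebra}.

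First, I would note that both sides have the right type: since $\cahc[A](X,Y) = A(X\concat Y)$ and $\gathr{c}\colon \emptylist \to X\concat Y$ is a morphism in $\cospan$, the element $A(\gathr{c})(\gamma) \in A(X\concat Y)$ is indeed a morphism $X\to Y$ in $\cahc[A]$. By Proposition \ref{prop.ica_initial_cospan_algebra}, there is a unique cospan-algebra morphism $\alpha \colon \prt \to A$, and from \eqref{eqn.def_unique_nattrans} its components are given by $\alpha_Z(f) = A(f)(\gamma)$ for every $f\in\cospan(\emptylist, Z)$.

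Next, apply the functor $\cahc$ (Proposition \ref{prop.calgtohyp}) to this morphism $\alpha$ to obtain an $\io$ hypergraph functor $F_\alpha\colon \cahc[\prt] \to \cahc[A]$; by Lemma \ref{lem.calgtohypmorphisms}, on morphisms $F_\alpha$ acts by $\alpha_{X\concat Y}$. Now consider the naturality square for the counit $\frob$ of the adjunction in Corollary \ref{cor.factor_adj}:
\[
\begin{tikzcd}[column sep=large]
  \cospan[\gens(\cahc[\prt])]\ar[r, "{\cospan[\gens(F_\alpha)]}"]\ar[d, "\frob_{\cahc[\prt]}"'] & \cospan[\gens(\cahc[A])]\ar[d, "\frob_{\cahc[A]}"] \\
  \cahc[\prt]\ar[r, "F_\alpha"'] & \cahc[A]
\end{tikzcd}
\]
Since $F_\alpha$ is identity-on-objects, $\gens(F_\alpha) = \id_\Lambda$ and the top arrow is the identity on $\cospan$. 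Therefore $\frob_{\cahc[A]} = F_\alpha \cp \frob_{\cahc[\prt]}$ as hypergraph functors $\cospan \to \cahc[A]$.

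Finally, evaluate both sides at $c$. By Lemma \ref{lemma.frob_name}, $\frob_{\cahc[\prt]}(c) = \gathr{c}$, so
\[
  \frob_{\cahc[A]}(c) \;=\; F_\alpha\bigl(\frob_{\cahc[\prt]}(c)\bigr) \;=\; F_\alpha(\gathr{c}) \;=\; \alpha_{X\concat Y}(\gathr{c}) \;=\; A(\gathr{c})(\gamma),
\]
as required. The only conceptual step is recognising that the naturality of $\frob$ reduces the claim to the initial case; everything else is unwinding definitions. There is no real obstacle, as the technical work has been absorbed into Lemma \ref{lemma.frob_name} and Proposition \ref{prop.ica_initial_cospan_algebra}.
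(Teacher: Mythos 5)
Your proposal is correct and follows essentially the same route as the paper: reduce to the initial cospan-algebra $\prt$ via \cref{lemma.frob_name}, transport along the unique map $\alpha\colon\prt\to A$ using the functor of \cref{lem.calgtohypmorphisms}, and conclude by naturality of the counit together with \cref{eqn.def_unique_nattrans}. The only blemish is notational: in the paper's diagrammatic convention the factorization should read $\frob_{\cahc[A]}=\frob_{\cahc[\prt]}\cp F_\alpha$ rather than $F_\alpha\cp\frob_{\cahc[\prt]}$, though your subsequent evaluation $\frob_{\cahc[A]}(c)=F_\alpha\bigl(\frob_{\cahc[\prt]}(c)\bigr)$ shows you intended the correct composite.
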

\begin{proof}
Let $\prt\colon\cospan\to\smset$ be the initial cospan-algebra, let $\alpha_!\colon\prt\to A$ be the unique map, and let $F_!\colon\cat{H}_\prt\to\cahc[A]$ be the associated $\io$ hypergraph functor given by \cref{lem.calgtohypmorphisms}. By naturality of the counit $\frob$, we have $\frob_A=\frob_\prt\cp F_!$.

Take any $c\colon X\to Y$ in $\cospan$. By \cref{lemma.frob_name}, $\frob_\prt(c)=\gathr{c}$, and by definition of $F_!$ we have $\frob_A(c)=F_!(\frob_\prt(c))=\alpha_!(\gathr{c})$. The result then follows from \cref{prop.ica_initial_cospan_algebra}, specifically \cref{eqn.def_unique_nattrans}, which says $\alpha_!(\gathr{c})=A(\gathr{c})(\gamma).$
\end{proof}

\section{The equivalence between $\of$-hypergraph categories and cospan-algebras }\label{ssec.equivalences}

We are now ready to prove the equivalence.

\begin{theorem} \label[theorem]{thm.equivoverlambda}
The functors from \cref{prop.hyptocalg,prop.calgtohyp} define an equivalence of categories:
\[
\begin{tikzcd}
\hyp_{\of(\Lambda)} 
\ar[r, shift left=1, "\hcca"] &
\lax(\cospan,\smset).
\ar[l, shift left=1, "\cahc"]
\end{tikzcd}
\]
Moreover, this equivalence is natural in $\Lambda\in\smset_\List$.
\end{theorem}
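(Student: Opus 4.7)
The plan is to construct natural isomorphisms in both directions and verify they respect all the hypergraph structure, then invoke the already-established naturality of $\hcca$ and $\cahc$ in $\Lambda$ to lift the equivalence to an $\smset_\List$-natural statement.

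For the round trip $\hcca \circ \cahc \cong \id_{\lax(\cospan,\smset)}$: Given a cospan-algebra $A\colon\cospan\to\smset$, the hypergraph category $\cahc[A]$ is objectwise-free on $\Lambda$, so by \cref{cor.factor_adj} the counit map $\frob\colon\cospan\to\cahc[A]$ is identity on objects. Unpacking definitions, for any $X\in\cospan$ we have
\[
\hcca[\cahc[A]](X)=\cahc[A]\bigl(I,\frob(X)\bigr)=\cahc[A](\emptylist,X)=A(\emptylist\concat X)=A(X),
\]
giving a candidate identity natural isomorphism. The harder part is checking this identification is natural in $X$: given a cospan $c\colon X\to Y$, both $\hcca[\cahc[A]](c)$ and $A(c)$ must agree. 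This is true because $\hcca[\cahc[A]](c)$ is by \cref{eqn.Psi_on_obj} postcomposition with $\frob_{\cahc[A]}(c)$ in $\cahc[A]$, which by \cref{cor.frobenius_from_names_of_cospans} equals $A(\gathr{c})(\gamma)$; unwinding through composition in $\cahc[A]$ using \cref{eq.composition} and a zigzag identity on $\comp$ reduces this to $A(c)$. Compatibility with the laxators is a direct check from \cref{eqn.laxator_cospan_alg}.

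For the round trip $\cahc \circ \hcca \cong \id_{\hyp_{\of(\Lambda)}}$: Given $\cat{H}\in\hyp_{\of(\Lambda)}$, the two categories agree on objects (both are $\List(\Lambda)$). On morphisms,
\[
\cahc[\hcca[\cat{H}]](X,Y)=\hcca[\cat{H}](X\concat Y)=\cat{H}\bigl(I,\frob(X\concat Y)\bigr)=\cat{H}(I,X\otimes Y),
\]
where the last equality uses that $\frob$ is identity-on-objects and strict monoidal on objects. The self-dual compact closed structure of \cref{sec.sdccc} supplies the gather bijection $\gathr{-}\colon\cat{H}(X,Y)\xrightarrow{\cong}\cat{H}(I,X\otimes Y)$ of \cref{prop.gather_and_parse}; this is my candidate natural isomorphism $\cat{H}(X,Y)\cong\cahc[\hcca[\cat{H}]](X,Y)$. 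I would then verify that this bijection transports all the structure of $\cat{H}$ to that of $\cahc[\hcca[\cat{H}]]$: preservation of composition is precisely the content of \cref{prop.comp_works} combined with the definition of composition in $\cahc[-]$ via $\comp^Y_{X,Z}$ in \cref{eq.composition}; preservation of identities, braidings, and the Frobenius generators reduces via \cref{lemma.frob_name} to checking that each of these structure maps in $\cat{H}$ has a name equal to the corresponding cospan listed in \cref{eqn.id_and_hyp_struc}, which is immediate from the definitions $\cupp_X = \eta_X\cp\delta_X$ and $\capp_X=\mu_X\cp\epsilon_X$ from \cref{prop.compactclosed} and the interchange of Frobenius generators with the monoidal product from \cref{eqn.hypergraph_def}; preservation of the monoidal product on morphisms is similar.

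The main obstacle will be the composition-preservation check in the second round trip: one has to trace through how $\gathr{f\cp g}$ relates to $\gathr{f}$ and $\gathr{g}$ via $\comp$ and match it against the two-step recipe ``apply the laxator $\gamma_{X\concat Y,Y\concat Z}$ then apply $\hcca[\cat{H}](\comp^Y_{X,Z})$'' used to define composition in $\cahc[\hcca[\cat{H}]]$. This is exactly the content of \cref{prop.comp_works} once one unwinds that the laxator on $\hcca[\cat{H}]$ is given by $\otimes$ in $\cat{H}$ (as in \cref{eqn.laxator_cospan_alg}), so no new ideas are needed, just careful bookkeeping. Finally, naturality of the equivalence in $\Lambda\in\smset_\List$ is automatic: both $\hcca$ and $\cahc$ were built to be natural in $\Lambda$ (the squares \cref{eqn.naturality1} and its counterpart in the proof of \cref{prop.calgtohyp}), and the natural isomorphisms constructed above are defined by formulae that commute with the reindexing functors $\hyp_{\of(f)}$ and $\cospan[f]^*$ on the nose.
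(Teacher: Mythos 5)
Your proposal is correct and follows essentially the same route as the paper: the name map $\nu_\cat{H}(f)=\gathr{f}$ gives the isomorphism $\cat{H}\cong\cahc[{\hcca[\cat{H}]}]$ (composition via \cref{prop.comp_works} and \cref{eq.composition}, structure maps via the cospans of \cref{eqn.id_and_hyp_struc}), the identification $\hcca[{\cahc[A]}]=A$ holds on the nose using \cref{cor.frobenius_from_names_of_cospans} and \cref{prop.remember_the_name}, and naturality in $\Lambda$ is inherited from \cref{prop.hyptocalg,prop.calgtohyp}. One small caveat: \cref{lemma.frob_name} concerns $\cat{H}_\prt$ specifically, so for general $\cat{H}$ the equality of $\gathr{\mu_X}$ (and the other structure maps) with the image of the corresponding cospan should be checked directly, as the paper does by a short string-diagram computation, and the routine naturality of $\nu$ in $\cat{H}$ (and of the identification in $A$) should be recorded.
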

%
%
\begin{proof}
Choose a set $\Lambda$. We shall show that $\cahc[\hcca]$ is naturally isomorphic to the identity
functor on $\hyp_{\of(\Lambda)}$, and that $\hcca[\cahc]$ is in fact equal to
the identity functor on $\lax(\cospan,\smset)$. We will then be done because both $\cahc$ and $\hcca$ were shown to be natural in $\Lambda\in\smset_\List$.

Let's first consider the case of $\cahc[\hcca]$. The hypergraph
category $\cahc[A_\cat{H}]$ has the same objects as $\cat{H}$ and has homsets
$\cahc[A_\cat{H}](X,Y) = \cat{H}(I,X\concat Y)$. Define an $\io$
hypergraph functor $\nu_\cat{H}\colon \cat{H} \to \cahc[A_\cat{H}]$ by sending $f
\in \cat{H}(X,Y)$ to its name $\nu_\cat{H}(f)\coloneqq\gathr{f}$, as in \cref{prop.gather_and_parse}. 

This map $\nu_\cat{H}$ is a well-defined functor due to the compact closure axioms.
Indeed, let $f\colon X \to Y$ and $g \colon Y \to Z$ be morphisms in $\cat{H}$.
By \cref{eq.composition}, the composite of names $\gathr{f} \cp \gathr{g}$ and the name of the composite are equal in
$\cahc[A_\cat{H}]$, by \cref{prop.comp_works}.

Preservation of the hypergraph structure
follows from the Frobenius axioms. For example, given the multiplication
$\mu_X$ on some object $X$ in $\cat{H}$, its image under $\nu$ is given by
\[
\begin{tikzpicture}[oriented WD, spider diagram, bbx=1em, bby=1ex, bb min width=1.5em]
	\node[bb={1}{1}] (mu) {$\mu$};
	\node[bb={0}{0}, draw=none, fit={(mu.south west) ($(mu.north east)+(0,3.5)$)}] (muhat) {};
	\coordinate (muhat_out2) at (mu_out1-|muhat.east);
	\coordinate (muhat_out1) at ($(muhat_out2)!.55!(muhat.north east)$);
	\begin{scope}[shorten >=-2pt]
  	\draw (mu_out1) to (muhat_out2) node[right, font=\scriptsize] {$X\concat X$} ;
  	\draw let \p1=(mu_in1|-muhat_out1) in
  		(mu_in1) to[in=180, out=180] (\x1,\y1) to (muhat_out1)
		node[right, font=\scriptsize] {$X$};
	\end{scope}
	\node[spider={1}{2}, right=8 of mu] (dot) {};
	\node[bb={0}{0}, color=white, fit={($(dot.south west)+(-1,-1)$)
	($(dot.north east)+(1,4.5)$)}] (dothat) {};
	\coordinate (dothat_out2) at (dothat.east|-dot_out1);	
	\coordinate (dothat_out3) at (dothat.east|-dot_out2);
	\coordinate (dothat_out1) at ($(dothat_out2)!.5!(dothat.north east)$);
	\begin{scope}[shorten >=-2pt]
  	\draw (dot_out1) to (dothat_out2);
  	\draw (dot_out2) to (dothat_out3);
  	\draw let \p1=(dot_in1|-dothat_out1) in
  		(dot_in1) to[in=180, out=180] (\x1,\y1) to (dothat_out1);	
	\end{scope}
	\node (eq1) at ($(muhat.east)!.7!(dothat.west)$) {=};
	\node[spider={0}{3}, right=8.5 of eq1] (spi) {};
	\node[bb={0}{0}, color=white, fit={(spi.west) ($(spi_out1)+(-1,0)$) ($(spi_out3)+(-.8,0)$)}] (spihat) {};
	\coordinate (spihat_out1) at (spihat.east|-spi_out1);	
	\coordinate (spihat_out2) at (spihat.east|-spi_out2);
	\coordinate (spihat_out3) at (spihat.east|-spi_out3);
	\begin{scope}[shorten >=-2pt]
  	\draw (spi_out1) to (spihat_out1);
  	\draw (spi_out2) to (spihat_out2);
  	\draw (spi_out3) to (spihat_out3);
	\end{scope}	
	\node (eq2) at ($(dothat.east)!.6!(spihat.west)$) {=};	
\end{tikzpicture}
\]
which is exactly the morphism specified by the cospan $\emptylist \to X\concat X
\concat X$ in $\cahc[A_\cat{H}]$; see \cref{eqn.id_and_hyp_struc}.

We thus see that $\nu_{\cat{H}}$ defines an identity-on-objects hypergraph
functor. Moreover, compact closure (\cref{prop.compactclosed}) implies $\nu$ is
fully faithful, and hence $\nu_{\cat{H}}$ has an inverse hypergraph functor. We must check these functors $\nu_\cat{H}$ are natural in $\cat{H}$, i.e.\ that for
any hypergraph functor $F\colon \cat{H} \to \cat{H}'$ the following naturality square commutes:
\[
\begin{tikzcd}
	\cat{H}\ar[r, "\nu_{\cat{H}}"]\ar[d, "F"']&
	\cahc[{\hcca[\cat{H}]}]\ar[d, "{\cahc[A_F]}"]\\
	\cat{H}'\ar[r, "\nu_{\cat{H}'}"']&
	\cahc[{\hcca[\cat{H}']}]
\end{tikzcd}
\]
This is so because all the maps in the square are $\io$ and for any morphism $f \in \cat{H}$ we have $\gathr{F(f)}=F(\gathr{f})$. Thus $\nu\colon\id_{\hyp_{\of(\Lambda)}}\to\cahc[\hcca]$ is a natural isomorphism, as desired.

Next we consider the case of $\hcca[\cahc]$; we want to show that for any cospan-algebra $A$, there is an equality $\hcca[{\cahc[A]}]=A$ of lax symmetric monoidal
functors $\cospan \to \smset$. On the objects of $\cospan$ this is
straightforward: the cospan-algebra $\hcca[{\cahc[A]}]$ maps an
object $X \in \cospan$ to $\cahc[A](\emptylist,X)= A(X)$, so by definition
$\hcca[{\cahc[A]}](X) = A(X)$. For morphisms, let $c\colon X \to Y$ be a cospan over
$\Lambda$. Then $\hcca[{\cahc[A]}](c)$ is the function $A(X) = \cahc[A](\emptylist,X) \to \cahc[A](\emptylist,Y)= A(Y)$ given by composition with $\frob(c)\in\cahc[A](X,Y)$,
where  $\frob\colon\cospan\to\cahc[A]$ is the functor from \cref{cor.factor_adj}. By \cref{cor.frobenius_from_names_of_cospans}, however,
$\frob(c)= A(\hat{c})(\gamma)$. By definition of composition in $\cahc[A]$
\eqref{eq.composition} and \cref{prop.remember_the_name}, this implies that
$\hcca[{\cahc[A]}](c)$ is exactly the function $A(c)$. Thus
$\hcca[\cahc]$ is identity-on-objects.

Next, we consider the action of $\hcca[\cahc]$ on morphisms. Suppose that
$\alpha\colon A \to B$ is a morphism of cospan-algebras, i.e.\ a
monoidal natural transformation. We shall show that
$\hcca[{\cahc[\alpha]}] =\alpha$. Indeed, $\cahc[\alpha]\colon \cahc[A] \to
\cahc[B]$ maps each $f \in \cahc[A](\emptylist,X)$ to $\alpha_X(f)$, and hence 
$\hcca[{\cahc[\alpha]}](X)\colon A(X) \to B(X)$ maps each $f \in
A(X)=\cahc[A](\emptylist,X)$ to $\alpha(f) \in B(X) = \cahc[B](\emptylist,X)$. This is what we
wanted to show.

We have shown $\hcca[\cahc] = \id_{\lax(\cospan,\smset)}$, completing the proof.
\end{proof}

\begin{example}
Recall the hypergraph category $\linrel$ with the addition Frobenius structure given
in \cref{ex.linrel_scfs_add}, where for example the linear relation corresponding to $\mu$ is $\{(a,b,c)\mid a+b=c\}\ss\rr^3$. We shall construct $\linrel'\coloneqq\cahc[{\hcca[\linrel]}]$, and
observe that it is hypergraph equivalent to $\linrel$. However, one may notice that the definition of $\mu$ is not symmetric with respect to $a,b,c$, e.g.\ in contrast with what one might call the symmetric version, $\{(a,b,c)\mid a+b+c=0\}$. Since cospan-algebras have no notion of domain and codomain, we will see that the isomorphism $\nu\colon\linrel\to\linrel'$ must rectify the asymmetry with a minus-sign.

By \cref{eqn.laxator_cospan_alg}, the lax symmetric monoidal functor $\hcca[\linrel]\colon
\cospan[] \to \smset$ sends each natural number $n$ to $\linrel(\rr^0,\rr^n)$, which we identify with the set of linear
subspaces of $\rr^n$, and it sends each cospan $m \to n$ to the unique
corresponding linear relation $\rr^m \to \rr^n$ defined by the Frobenius maps.

At first blush, the homsets $\linrel'(m,n)$ appear to be the same as those of $\linrel$, but this is not quite so:
\[
	\linrel(m,n)=\{R\ss\rr^m\oplus\rr^n\}
	\qquad\text{whereas}\qquad
	\linrel'(m,n)=\{R\ss\rr^{m+n}\}.
\]
These are certainly isomorphic, but in more than one way. The particular isomorphism $\nu$ constructed in \cref{thm.equivoverlambda} is given by sending $R$ to its name $\nu(R)=\gathr{R}$, which itself arises \emph{via the Frobenius structures in $\linrel$}; see \cref{sec.sdccc}. Unwinding the definitions, we have
\begin{align*}
	\nu(R)&=
	\{(a,b)\in\rr^{m+n}\mid\exists(a'\in\rr^m)\ldotp a+a'=0\wedge(a',b)\in R\}\\&=
	\{(a,b)\in\rr^{m+n}\mid (-a,b)\in R\}.
\end{align*}
\end{example}

\begin{remark}
The isomorphism between the categories of cospan-algebras and hypergraph
categories is a special case of the fact that there exists an isomorphism
\[
	\hyp^\io_{\cat{H}/}\cong\Cat{Lax}(\cat{H},\smset).
\]
between the category $\lax(\cat{H},\smset)$ of lax symmetric monoidal functors
$\cat{H} \to \smset$, and the coslice category $\hyp_{\cat{H}/}^{\mathrm{io}}$
over $\cat{H}$ of hypergraph categories and identity-on-objects
hypergraph functors between them.  Above we have just taken $\cat{H}$ to
be the free hypergraph category $\cospan$ over $\Lambda$. Nonetheless, the proof
above generalizes to the case where $\cat{H}$ is any hypergraph category.
\end{remark}

We can now easily prove the main theorem.

\begin{theorem} \label[theorem]{thm.main}
We have an equivalence of categories
\[
\hyp_\of \cong \calg.
\]
\end{theorem}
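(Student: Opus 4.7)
The plan is to assemble the desired equivalence from the three results already in hand: Corollary \ref{cor.hypof_fibration}, which writes $\hyp_\of$ as a Grothendieck construction over $\smset_\List$; Proposition \ref{prop.calg_fibration}, which does the same for $\calg$; and Theorem \ref{thm.equivoverlambda}, which provides the fiberwise equivalence $\hyp_{\of(\Lambda)} \cong \lax(\cospan,\smset)$ naturally in $\Lambda$. Concretely, I would assemble the chain
\[
  \hyp_\of \;\cong\; \int^{\Lambda\in\smset_\List}\hyp_{\of(\Lambda)}
  \;\cong\; \int^{\Lambda\in\smset_\List}\lax(\cospan,\smset)
  \;\cong\; \calg,
\]
where the outer two isomorphisms are Corollary \ref{cor.hypof_fibration} and Proposition \ref{prop.calg_fibration}, respectively.

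The middle step requires the general fact that a pseudonatural equivalence between two pseudofunctors $F,G\colon\cat{B}\to\smcat\op$ induces an equivalence of their Grothendieck constructions over $\cat{B}$. I would spell this out briefly: given $F\Rightarrow G$ whose components are equivalences, define a functor $\int F\to\int G$ on objects by $(B,a)\mapsto(B,F(B)_\Lambda(a))$ and on morphisms in the obvious way using the naturality squares; its pseudoinverse is built using the chosen pseudoinverses of the fiber equivalences. In our situation, $\cat{B}=\smset_\List$, the two pseudofunctors are $\hyp_{\of(-)}$ and $\Lambda\mapsto\lax(\cospan,\smset)$, and the required pseudonatural equivalence is exactly what Theorem \ref{thm.equivoverlambda} delivers—with the extra strength that one direction ($\hcca[\cahc]$) is an equality and the other ($\cahc[\hcca]$) is a natural isomorphism, both shown to be compatible with reindexing along morphisms $f\colon\Lambda\to\Lambda'$ in $\smset_\List$.

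The step likeliest to hide subtleties is the naturality/compatibility check: we need the square
\[
\begin{tikzcd}[column sep=large]
  \hyp_{\of(\Lambda')} \ar[r,"\hcca"] \ar[d,"\hyp_{\of(f)}"'] & \lax(\cospan[\Lambda'],\smset) \ar[d,"(\cospan[f])^*"] \\
  \hyp_{\of(\Lambda)} \ar[r,"\hcca"'] & \lax(\cospan,\smset)
\end{tikzcd}
\]
to commute (which is already shown in the proof of Proposition \ref{prop.hyptocalg}), together with the analogous square for $\cahc$ (established in the proof of Proposition \ref{prop.calgtohyp}), and to verify that the unit $\nu\colon\id\Rightarrow\cahc\cp\hcca$ of Theorem \ref{thm.equivoverlambda} is itself natural in $\Lambda$, i.e.\ that $\nu$ intertwines $\hyp_{\of(f)}$ with reindexing of cospan-algebras along $\cospan[f]$. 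Since $\nu_{\cat{H}}(g)=\gathr{g}$ is defined purely using the Frobenius structure of $\cat{H}$, and $\hyp_{\of(f)}$ is defined by $(\ff)$-restriction of $\cospan[f]\cp\frob$, this compatibility follows from Lemma \ref{lemma.factoring_and_frob} and the fact that hypergraph functors preserve names. Once these naturality checks are in place, the fiberwise equivalence promotes to an equivalence of Grothendieck constructions, and composing with the two isomorphisms above yields $\hyp_\of\cong\calg$.
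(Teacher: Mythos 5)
Your proposal is correct and follows essentially the same route as the paper: the chain $\hyp_\of \cong \int^{\Lambda}\hyp_{\of(\Lambda)} \cong \int^{\Lambda}\lax(\cospan,\smset) \cong \calg$, with the outer isomorphisms from \cref{cor.hypof_fibration} and \cref{prop.calg_fibration} and the middle one induced by the fiberwise equivalence of \cref{thm.equivoverlambda} via functoriality of the Grothendieck construction. The extra detail you supply on transporting a pseudonatural (fiberwise) equivalence through the Grothendieck construction, and on the naturality checks already carried out in \cref{prop.hyptocalg,prop.calgtohyp,thm.equivoverlambda}, just makes explicit what the paper's proof invokes in one line.
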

\begin{proof}
\cref{thm.equivoverlambda} provides an equivalence $\hyp_{\of(\Lambda)}\cong\lax(\cospan,\smset)$, natural in $\Lambda\in\smset_\List$. Since the Grothendieck construction is functorial, the middle map below is also an equivalence
\[
  \hyp_\of \cong
  \int^{\Lambda\in\smset_\List}\hyp_{\of(\Lambda)}\cong
  \int^{\Lambda \in \smset_\List} \lax(\cospan,\smset)\cong
  \calg.
\]
and the first and third equivalence follow from \cref{cor.hypof_fibration,prop.calg_fibration}.
\end{proof}

\printbibliography

\end{document}